\newtheorem{theorem}{Theorem}
\newtheorem{proposition}{Proposition} 
\newtheorem{remark}{Remark} 
\newtheorem{lemma}{Lemma}
\begin{document}
	
\title[Limit cycles of a quartic model]{On the limit cycles of\\ a quartic model for evolutionary stable strategies}

\author[Armengol Gasull, Luiz F. S. Gouveia and Paulo Santana]
{Armengol Gasull$^1$, Luiz F.S. Gouveia$^2$ and Paulo Santana$^3$}

\address{$^1$ Departament de Matem\`{a}tiques, Facultat de Ci\`{e}ncies, Universitat Aut\`{o}noma de Barcelona, 08193 Bellaterra, Barcelona, Spain}
\email{armengol.gasull@uab.cat}

\address{$^2$ UNICAMP, Campinas, Brazil \& UNESP, S. J. Rio Preto, Brazil}
\email{fernando.gouveia@unesp.br; lgouveia@unicamp.br}

\address{$^3$ UNESP, S. J. Rio Preto, Brazil}
\email{paulo.santana@unesp.br}

\subjclass[2020]{34C07, 34C23, 34C25}

\keywords{Limit cycles, Evolutionary Stable Strategies, center-focus, cyclicity, Berlinski\u \i's Theorem}

\begin{abstract} 
	This paper studies the number of centers and limit cycles  of the family of planar quartic polynomial vector fields  that has the invariant algebraic curve $(4x^2-1)(4y^2-1)=0.$ The main interest for this type of vector fields  comes from their appearance in some mathematical models in Game Theory composed by two players. In particular, we find examples with five nested limit cycles surrounding the same  singularity, as well as examples with four limit cycles formed by two disjoint nests, each one of them with two  limit cycles.  We also prove a Berlinski\u \i's type result for this family of vector fields.
\end{abstract}

\maketitle

\section{Main results}\label{Sec2}

This paper is devoted to study the number of centers and limit cycles  of the family of planar quartic polynomial vector fields  that has the invariant algebraic curve $(4x^2-1)(4y^2-1)=0.$ More qualitatively, we are interested on the periodic orbits of these systems included in the invariant square~$\Delta$ with boundaries formed by the four invariant lines, $x=\pm 1/2,$ $y=\pm 1/2.$

As we will see in detail in Section~\ref{Sec1} the main motivation for studying this family is its appearance in some mathematical models in Game Theory when the model has two players. Then, their strategies correspond to the coordinates of the points in this invariant square
\[\Delta=\left\{(x,y)\in\mathbb{R}^2\colon-1/2<x<1/2,\;-1/2<y<1/2\right\}.\]

More concretely, given $d\in\mathbb{N}$, let $\mathfrak{X}_d$ be the set of planar polynomial vector fields $X=(P,Q)$, with
\begin{equation}\label{7}
	P(x,y)=(4x^2-1)f(x,y), \quad Q(x,y)=(4y^2-1)g(x,y),
\end{equation}
where $f$, $g\colon\mathbb{R}^2\to\mathbb{R}$ are real polynomials of degree at most $d$. For these systems, $\Delta$ and the four lines described above are invariant. In this work we will center our attention to the cases $d\le2,$ and more specifically on the existence of periodic orbits in $\Delta.$ Of course, the cases $d>2$ deserve  a further study.

When $X\in\mathfrak{X}_1$ its phase portrait is well understood.  It has at most one singularity not lying in one of the four invariant lines.
Hence, $X$ can have at most one center and it is readily seen that it exists for some values of the parameters.  Moreover, it follows from Kooij \cite[Theorem $2.5.4$]{Kooij} that if $X\in\mathfrak{X}_1$, then $X$ can have at most one limit cycle and this upper bound is realizable. Furthermore, the phase portraits of these systems are well known, see for instance~\cite{BBS1,BBS2,SSHGM}. We will center our attention in the case $d=2,$ i. e. when $X$ is a quartic vector field. 

Recall that a singularity $p$ of a planar vector field  is a \emph{center} if there is a neighborhood $\mathcal{U}\subset\mathbb{R}^2$ of $p$ such that $\mathcal{U}\backslash\{p\}$ is filled by periodic orbits of the vector field. In our first main result, we study the maximum number of centers that $X\in\mathfrak{X}_2$ can have in $\Delta$ and we perturb these centers to obtain examples of systems with  many  limit cycles and with two  different configurations.

\begin{theorem}\label{T1}
	Let $X\in\mathfrak{X}_2$ as in~\eqref{7}. Then, the following statements hold.
	\begin{enumerate}[label=(\alph*)]
			\item $X$ has at most two centers in $\Delta.$ Moreover, there are $X$ with one or two centers in $\Delta.$ 
		\item There are $X$ with at least five nested limit cycles in $\Delta;$
		\item There are $X$ with at least four limit cycles in $\Delta,$ forming two disjoint nests of two limit cycles each.
	\end{enumerate}
\end{theorem}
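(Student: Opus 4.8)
\emph{Part (a).}\ The plan is to first reduce the statement to a question about conics. Since $4x^2-1$ and $4y^2-1$ do not vanish on the open square, the singularities of $X$ in $\Delta$ are precisely the points where $f=g=0$, i.e.\ the intersections of two conics; by B\'ezout's theorem there are at most four of them (counted with multiplicity), the case of a common factor being treated separately. Differentiating~\eqref{7} and using $f(p)=g(p)=0$, the Jacobian matrix at such a point $p$ factors as
\[
DX(p)=\begin{pmatrix}4x^2-1&0\\[1pt]0&4y^2-1\end{pmatrix}\begin{pmatrix}f_x&f_y\\ g_x&g_y\end{pmatrix}\Bigg|_{p},
\]
so that $\det DX(p)=(4x^2-1)(4y^2-1)\,(f_xg_y-f_yg_x)(p)$ with the first factor strictly positive on $\Delta$. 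Since a center is an antisaddle, it requires $\det DX(p)\ge 0$, hence $(f_xg_y-f_yg_x)(p)\ge 0$, with strict inequality unless the two conics are tangent at $p$. The next step is a Berlinski\u{\i}-type (parity) argument: $f_xg_y-f_yg_x$ is, up to a nowhere-vanishing factor, the derivative of $g$ along $\{f=0\}$, so its sign alternates at consecutive simple zeros of $g$ restricted to each connected arc of $\{f=0\}$; running this along (and across) both conics $\{f=0\}$ and $\{g=0\}$, together with the B\'ezout bound and a separate bookkeeping of the tangential and degenerate cases, one concludes that at most two of the singularities in $\Delta$ are antisaddles, and hence at most two are centers.

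\emph{Realizations in (a).}\ A system with exactly one center is already available inside $\mathfrak{X}_1\subset\mathfrak{X}_2$. To realize two centers, the plan is to use time-reversibility: if $f$ is even in $x$ and $g$ is odd in $x$, then $X$ is reversible with respect to $(x,y,t)\mapsto(-x,y,-t)$, the line $\{x=0\}$ is invariant for this symmetry, and every singular point of $X$ on $\{x=0\}$ has automatically zero trace; it then suffices to choose the (few) free coefficients so that $f(0,y)=0$ has two roots in $(-1/2,1/2)$ at which $\det DX>0$, so that both are monodromic and, by reversibility, centers. A concrete choice is $P(x,y)=(4x^2-1)\bigl(y^2-\tfrac1{16}\bigr)$, $Q(x,y)=-4xy(4y^2-1)$, whose only singularities in $\Delta$ are the two centers $(0,\pm\tfrac14)$.

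\emph{Parts (b) and (c).}\ In both cases the plan is to start from a center of $\mathfrak{X}_2$, whose period annulus lies in $\Delta$, and to count the limit cycles that can be made to emerge under perturbations keeping $X$ in $\mathfrak{X}_2$. For~(c) we start from a system with two centers (such as the reversible one above, or a small deformation of it): the two period annuli are disjoint, so the Poincar\'e--Lyapunov (focal) quantities attached to the two singularities can be annihilated and then unfolded \emph{independently}, producing two small hyperbolic limit cycles around each singularity, hence two disjoint nests of two limit cycles. For~(b) we concentrate the perturbation at a single center: after normalizing it to the origin, we compute the focal quantities $v_3,v_5,v_7,\dots$ of the quartic field, show that the parameters can be tuned so that the origin becomes a weak focus of order four, and unfold it to get four nested hyperbolic limit cycles; a fifth, ``large'' one is then obtained through an independent mechanism at the outer boundary of the period annulus --- the polycycle $\partial\Delta$ formed by the four invariant segments $x=\pm\tfrac12$, $y=\pm\tfrac12$ and the corner singularities --- by imposing the ``neutrality'' condition on that polycycle and releasing one more limit cycle from it, for a total of five nested limit cycles in $\Delta$. (Alternatively, if the focal quantities already allow a weak focus of order five, all five come from the degenerate Hopf bifurcation, with no need for the boundary mechanism.)

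\emph{Main obstacle.}\ The heaviest part is the symbolic computation and the independence study of the focal quantities of a quartic vector field: one must exhibit a subvariety of parameter space on which $v_3,\dots,v_{2k-1}$ vanish while $v_{2k+1}\ne 0$ with $k$ as large as possible, which calls for a careful resultant/Gr\"obner-basis analysis. Two further, more delicate points are: in~(b), the semi-global control of the bifurcation from the boundary polycycle --- identifying the neutrality condition and checking that the extra limit cycle stays inside $\Delta$ and surrounds, without colliding with, the four small ones; and in part~(a), the complete case analysis of the parity argument across the different possible types (and tangencies) of the conics $\{f=0\}$ and $\{g=0\}$, together with the degenerate linear parts that a center is allowed to have.
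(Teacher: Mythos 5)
Your reduction of the singularities in $\Delta$ to the intersection of the two conics $\{f=0\}$, $\{g=0\}$ and the factorization $\det DX(p)=(4x^2-1)(4y^2-1)\det D(f,g)(p)$ are correct and match the paper. However, the core of your argument for part (a) has a genuine gap: a parity/index count along the conics cannot yield ``at most two antisaddles.'' Berlinski\u \i's Theorem itself (which the paper proves for this family in Proposition~2) allows the configuration with \emph{three} antisaddles at the vertices of a triangle and one saddle inside, and this is fully consistent with every index and B\'ezout constraint. Indices distinguish saddles from antisaddles, but they carry no information that separates centers from the other antisaddles, so no bookkeeping of signs of $f_xg_y-f_yg_x$ can rule out three centers. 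The missing idea is an \emph{orientation} argument: one must show that the segment joining two centers in $\Delta$ is without contact for the flow, which forces the two centers to be traversed with opposite orientations; three centers would form a triangle with every pair of opposite orientation, a contradiction. Proving the no-contact property is where the invariant lines enter essentially: the line $\ell$ through the two centers meets the degree-$4$ field in at most four contact points, two of which are the centers themselves, and the crossings of $\ell$ with $x=\pm\tfrac12$, $y=\pm\tfrac12$ (which lie \emph{outside} the segment) force the remaining two contact points onto the unbounded rays, leaving none between the centers. Degenerate centers also need the separate multiplicity-parity argument (index $+1$ forces odd multiplicity, hence multiplicity $\ge 3$ for a degenerate center, which by B\'ezout already caps the number of singularities at two). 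Your two-center reversible example $P=(4x^2-1)(y^2-\tfrac1{16})$, $Q=-4xy(4y^2-1)$ is correct.

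For parts (b) and (c) your plans diverge from the paper in ways worth noting. For (b), your parenthetical alternative (a single degenerate Hopf point of sufficiently high order) is exactly what the paper does: a one-parameter family of centers inside $\mathfrak{X}_2$ whose perturbation yields first-order Lyapunov quantities of rank $4$ plus one extra vanishing condition in the parameter, giving $4+1=5$ small limit cycles around one point. Your primary route --- four small cycles plus a fifth bifurcating from the boundary polycycle $\partial\Delta$ --- is not needed and would be substantially harder (hyperbolicity ratios at the four corner saddles, and control that the large cycle surrounds the small nest); as written it is a sketch, not a proof. For (c), the paper does not unfold the two nests independently: it exploits the symmetry $(x,y,t)\mapsto(-x,-y,-t)$ of a subfamily with two symmetric centers at $(\pm\tfrac14,0)$, so that two cycles produced around one center automatically yield two around the other. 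Your ``independent annihilation'' of the focal quantities at both singularities is plausible but requires verifying a rank condition on the joint Jacobian of the Lyapunov quantities at the two points with respect to the admissible perturbation parameters (the same parameters affect both singularities), which you do not address; the symmetry trick sidesteps this entirely.
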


The main idea for proving item $(a)$ is to use the same geometrical idea that the proof of the same result for quadratic systems: {\it the segment joining two singularities of center type is without contact by the flow,} see for instance~\cite{Cop1966}.  This is a direct consequence of Bezout's theorem in the quadratic case but it needs some extra work to be proved in our case. Once this geometric property is proved the result is straightforward because if three centers exist they must form a triangle and given any couple of these centers they must have opposite orientations giving rise to a contradiction.

The result in item $(b)$  gives an example of vector field in  $\mathfrak{X}_2$ with five limit cycles different to the one given in the recent paper~\cite{GraSza2024} with the same number of limit cycles. The way chosen in~\cite{GraSza2024} to obtain  five limit cycles has been the computations of the full Lyapunov quantities of a specific family of $X\in\mathfrak{X}_2.$ Our approach also uses Lyapunov quantities, but we rely on a different point of view, developed in~\cite{GinGouTor2021} and in its references, that starts with some centers and then perturb them to get Taylor's expansions of these constants in terms of these perturbative parameters.

We also study the type of singularities in $\Delta$ when $X\in \mathfrak{X}_2,$ in the richer dynamical  case where $X$ has four singularities in $\Delta.$ First we recall the classical and nice Berlinski\u \i's Theorem proved in 1960 for quadratic systems. To see its proof and more related references, see \cite{B,Cop1966,CGM1993}.

\begin{theorem}[Berlinski\u \i's Theorem (\cite{B})]
Suppose that $Y$ is a quadratic vector field with exactly four singularities. If the quadrilateral with vertices at these singularities is convex then two opposite singularities are saddles (index $-1$) and the other two are antisaddles (nodes, foci or centers, index $+1$). Otherwise their convex hull is a triangle and  then either the three exterior singularities at the vertices are saddles and the interior singularity an antisaddle, or vice-versa.
\end{theorem}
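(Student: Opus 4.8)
The plan is to follow the classical route (see \cite{B,Cop1966,CGM1993}), whose engine is B\'ezout's theorem together with the rigidity of a quadratic field along any straight line. Write $Y=(Y_1,Y_2)$ with $Y_1,Y_2$ real polynomials of degree at most two. Since $Y$ has exactly four singularities, the conics $Y_1=0$ and $Y_2=0$ share no common component (otherwise there would be infinitely many singularities), so by B\'ezout they meet in at most four points counted with multiplicity; as there are exactly four, each intersection is transversal, hence each singularity is nondegenerate, with index $\operatorname{sign}\det DY=\pm1$. Moreover no three of the four singularities are collinear: if $p_1,p_2,p_3$ lay on a line $\ell$, the component of $Y$ normal to $\ell$, restricted to $\ell$, is a polynomial of degree at most two in the affine parameter vanishing at the three points, hence identically zero, so $\ell$ is invariant; but then the component of $Y$ tangent to $\ell$, also of degree at most two and vanishing at $p_1,p_2,p_3$, vanishes identically, making every point of $\ell$ a singularity, a contradiction. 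Therefore the convex hull of the four singularities is either a convex quadrilateral, with all four as vertices, or a triangle with the fourth singularity in its interior; this is exactly the dichotomy of the statement.

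The second ingredient is a transversality lemma: if $p$ and $q$ are singularities and the line $\ell=pq$ is not invariant, then along $\ell$ the normal component of $Y$ equals $\lambda(t-t_p)(t-t_q)$ with $\lambda\neq0$, so the flow crosses the open segment $\overline{pq}$ transversally in one fixed direction and crosses each of the two exterior rays of $\ell$ transversally in the opposite direction. I would first treat the generic case in which no side, diagonal or cevian determined by the four singularities lies on an invariant line of $Y$; the finitely many degenerate configurations can then be recovered by continuity (the index of a nondegenerate singularity and the combinatorial type of the four points are locally constant under a small perturbation of $Y$ inside the class of quadratic fields with four singularities) or handled by a short direct analysis of the one-dimensional dynamics on that invariant line.

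The heart of the proof is the index count. In the convex case I cut the quadrilateral $p_1p_2p_3p_4$ by its two diagonals, which meet at an interior point $m$; the direction of $Y(m)$ together with the transversality lemma (and its ``reversal on the exterior rays'' refinement) fixes the crossing directions of both diagonals, hence the behaviour of $Y$ along the boundary of each of the four triangles $m\,p_ip_{i+1}$. Since none of these triangles contains a singularity in its interior and the flow is transversal along their open edges, evaluating the winding number of $Y$ along their boundaries (with small detours around $p_i,p_{i+1}$) and using the Poincar\'e index theorem forces the indices of $p_1,p_2,p_3,p_4$ to read $+1,-1,+1,-1$ in cyclic order, which is the convex alternative. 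In the triangle case the same scheme is run with the three cevians joining the interior singularity to the vertices of the big triangle: the crossing pattern on the cevians and on the sides, fed into the index computation on the three sub-triangles, yields that the three vertices carry one common index and the interior singularity the opposite one, which is the non-convex alternative.

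I expect the genuine obstacle to lie precisely in this last step, namely in turning ``the flow crosses this segment in that direction'' into ``this singularity has index $+1$'' (resp. $-1$). The crossing directions of the segments issuing from a singularity do not by themselves separate a node from a saddle --- both have two real eigendirections --- so the local data must be combined with the global configuration and with the exterior-ray refinement, and the delicate bookkeeping is to control the $2\pi$-ambiguities of $\arg Y$ along the boundary of each sub-triangle, in particular to account for how many eigendirections of each linearization fall inside the relevant angular sector. The only other nuisance is the already-mentioned exhaustive treatment of the degenerate configurations with an invariant side, diagonal or cevian, which I would dispatch by the perturbation remark or by the one-dimensional analysis on that line.
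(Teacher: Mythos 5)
Your preliminaries are sound and standard: B\'ezout forces both components to be genuine conics meeting transversally, so the four singularities are simple with index $\pm1$; the ``three collinear singularities would make the whole line singular'' argument correctly yields the convex--quadrilateral/triangle dichotomy; and the factorisation of the normal component along a non-invariant line as $\lambda(t-t_p)(t-t_q)$ is exactly the without-contact lemma that this paper itself exploits in the proof of Theorem~\ref{T1}\,$(a)$. The gap is in the step you yourself flag. Knowing the crossing directions of the segments issuing from a singularity $p$ amounts to knowing the sign of the quadratic form $v\mapsto\langle DY(p)v,v^{\perp}\rangle$ on finitely many directions, and this form is indefinite and vanishes on the two eigendirections for a saddle and for a node alike, so the local data cannot separate index $+1$ from index $-1$. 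Your proposed remedy --- a winding-number computation of $\arg Y$ along the boundary of each sub-triangle --- requires evaluating the rotation of $Y$ along the small detour arcs at the $p_i$, i.e.\ the partial index of $DY(p_i)$ over the angular sector of the sub-triangle, and this quantity depends on how the eigendirections of $DY(p_i)$ are distributed among the sectors, not on $\det DY(p_i)$ alone. That is precisely the bookkeeping you defer, and nothing in the proposal supplies it; as written, the conclusion ``$+1,-1,+1,-1$ in cyclic order'' is asserted rather than derived, so the proposal is a plan for the hard part rather than a proof of it.

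For context, the paper does not reprove this theorem; it cites \cite{B}, \cite{Cop1966} and \cite{CGM1993}. The route in \cite{CGM1993} closes exactly the gap above with almost no effort, via the Euler--Jacobi formula: since the two quadrics meet in exactly $2\cdot2=4$ simple points, one has $\sum_{i=1}^{4}R(p_i)/\det DY(p_i)=0$ for every polynomial $R$ of degree at most $1$. Taking $R=1$ shows the determinants cannot all share a sign; taking $R$ to be the affine form vanishing on the line $p_ip_j$ shows that $\det DY(p_k)$ and $\det DY(p_l)$ have opposite or equal signs according as $p_k,p_l$ lie on the same side or on opposite sides of that line. Running over the six lines gives the alternating pattern in the convex case and the three-against-one pattern in the triangle case, with no winding-number analysis and no separate treatment of invariant lines. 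If you wish to keep a purely geometric argument you must actually carry out the sector-by-sector rotation count (essentially Berlinski\u \i's original, rather lengthy, proof); otherwise I would recommend switching to the Euler--Jacobi argument.
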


We prove next result for $X\in\mathfrak{X}_2.$  As we will see the tools used in its proof also give information when $X\in\mathfrak{X}_d$ for any $d.$ 

\begin{proposition}\label{T1.5} 
	Assume that $X\in\mathfrak{X}_2$ and it has four singularities in $\Delta.$ Then the conclusions of Berlinski\u \i's Theorem also hold for these four singularities.
\end{proposition}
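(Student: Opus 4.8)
The plan is to reduce the statement, via a Poincar\'e index computation, to the classical Berlinski\u \i's Theorem applied to the auxiliary quadratic vector field $\widehat{X}:=(f,g)$.

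First I would exploit the structure of $X$ on $\Delta$. Since $1-4x^2>0$ and $1-4y^2>0$ there, one has on $\Delta$
\[
X=(P,Q)=\bigl(-(1-4x^2)\,f,\;-(1-4y^2)\,g\bigr)=-\,\mathrm{diag}\bigl(1-4x^2,\,1-4y^2\bigr)\,\widehat{X}.
\]
Negating a planar vector field and multiplying each of its two components by a strictly positive continuous function do not alter the index at an isolated singularity: on a small circle around such a point, the homotopy obtained by replacing the scalar factors $1-4x^2$ and $1-4y^2$ by $(1-s)+s(1-4x^2)$ and $(1-s)+s(1-4y^2)$, $s\in[0,1]$, keeps both factors positive, hence keeps the field nonvanishing, so it joins $\widehat{X}$ to $-X$; and negation is a rotation by $\pi$. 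Consequently the index of $X$ at any singularity $p\in\Delta$ equals the index of $\widehat{X}$ at $p$. This argument uses nothing about the degree, which is the sense in which the tool also applies to $X\in\mathfrak{X}_d$ for general $d$.

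Next I would verify that $\widehat{X}$ is genuinely a quadratic system to which Berlinski\u \i's Theorem applies. The four singularities of $X$ in $\Delta$ are precisely the common zeros of $f$ and $g$ lying in $\Delta$. The cases $f\equiv 0$ or $g\equiv 0$ are impossible (then $X$ would vanish on a whole invariant line); and if $\deg f\le 1$ or $\deg g\le 1$ the curves $\{f=0\}$ and $\{g=0\}$ could meet in at most two points, contradicting the hypothesis, so $\deg f=\deg g=2$. By B\'ezout's theorem the two conics $\{f=0\}$ and $\{g=0\}$ have at most four common points in $\mathbb{CP}^2$ counted with multiplicity; since four distinct real ones already lie in $\Delta$, these exhaust the intersection, they are simple, and there are none at infinity. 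Hence $\widehat{X}=(f,g)$ is a quadratic vector field with exactly four singularities $p_1,\dots,p_4$, all real and finite.

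Finally I would apply Berlinski\u \i's Theorem to $\widehat{X}$. Its conclusion prescribes the Poincar\'e indices of $p_1,\dots,p_4$ (index $-1$ at saddles, index $+1$ at antisaddles) according to whether the convex hull of these four points is a convex quadrilateral or a triangle; by the first step, these indices coincide with the indices of $X$ at the same points, and the four points and their mutual position are common to both fields, so the conclusion of Berlinski\u \i's Theorem transfers to $X$. The only step requiring care is the first one, namely making precise that the index is insensitive to the positive diagonal factor $\mathrm{diag}(1-4x^2,1-4y^2)$ and to the overall sign; the remainder is a direct invocation of B\'ezout's and Berlinski\u \i's theorems.
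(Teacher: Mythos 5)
Your proposal is correct and follows essentially the same route as the paper: reduce to the quadratic field $(f,g)$, use B\'ezout to see that it has exactly four singularities, all simple, and transfer the indices back to $X$ because the factors $4x^2-1$ and $4y^2-1$ have constant (here positive) product on $\Delta$, so that Berlinski\u \i's Theorem for $(f,g)$ yields the conclusion. The only cosmetic difference is that you justify index preservation by a homotopy of the positive diagonal factor, whereas the paper computes $\det DX(p)=(4x^2-1)(4y^2-1)\det D(f,g)(p)$ at each simple singularity and reads off the index from the sign of the determinant.
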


It follows from Proposition~\ref{T1.5} and  Theorem~\ref{T1}\,$(a)$ that $\mathfrak{X}_2$ shares several qualitative properties with the family of the planar quadratic vector fields.  For instance both families have the same maximum number of centers and for both families Berlinski\u \i's Theorem holds. However, it is well-known \cites{Zeg2024,Zha2002} that quadratic vector fields cannot have two distinct nests of limit cycles with more than one limit cycle each. Therefore, it follows from Theorem~\ref{T1}\,$(c)$ that this particular property is not shared.	

As we will see in next section, one of the more interesting models in $\mathfrak{X}_d$, from the point of view of modelization, are the ones that when
we write
\begin{equation}\label{eq:fg}
	f(x,y)=\sum_{i+j=0}^{d}a_{ij}x^iy^j, \quad g(x,y)=\sum_{i+j=0}^{d}b_{ij}x^i y^j,\end{equation}
they satisfy that $a_{d,0}=b_{0,d}=0.$ For short, we will call the set of vector fields satisfying these two restrictions as $\mathfrak{X}_d^0\subset \mathfrak{X}_d.$
 In our second main result we will restrict our attention to $\mathfrak{X}_d^0$ for $d\le2.$

\begin{theorem}\label{T2}
	Let $X\in\mathfrak{X}_d^0$ as in~\eqref{7}, with $d\le 2,$ $f$ and $g$ given in \eqref{eq:fg}, and the restrictions $a_{d,0}=b_{0,d}=0.$ Then  the following statements hold.
	\begin{enumerate}[label=(\alph*)]
		\item  If $X\in\mathfrak{X}_1^0$, then $X$ has at most one center and it cannot have limit cycles;
		\item There are $X\in\mathfrak{X}_2^0$ with at least three nested limit cycles in $\Delta;$
		\item There are $X\in\mathfrak{X}_2^0$ with at least two limit cycles in $\Delta,$ each one of them surrounding a different singularity.
	\end{enumerate}
\end{theorem}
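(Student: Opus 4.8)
The three items are of quite different natures, so I would treat them separately.

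For item $(a)$, the plan is to write $X\in\mathfrak{X}_1^0$ explicitly: here $f(x,y)=a_{00}+a_{01}y$ and $g(x,y)=b_{00}+b_{10}x$, since the restrictions $a_{1,0}=b_{0,1}=0$ kill the remaining linear terms. So
\[
\dot x=(4x^2-1)(a_{00}+a_{01}y),\qquad \dot y=(4y^2-1)(b_{00}+b_{10}x).
\]
The only possible singularity in $\Delta$ is the intersection of the lines $a_{00}+a_{01}y=0$ and $b_{00}+b_{10}x=0$, which is a single point $p=(x_0,y_0)$ (when it lies in $\Delta$). First I would check that $p$ can only be a center of the ``obvious'' type, and then I would look for an inverse integrating factor or a Dulac function to rule out limit cycles. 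A natural candidate is $V(x,y)=(4x^2-1)^{\alpha}(4y^2-1)^{\beta}$ for suitable exponents; computing $\operatorname{div}(X/V)$ and choosing $\alpha,\beta$ to make it vanish (or have a fixed sign) on $\Delta$ should show that either $X$ has a first integral on $\Delta$ (so $p$ is a center and there are no limit cycles) or a Dulac function exists excluding periodic orbits. The key is that the factorized structure makes the divergence computation tractable; I expect the $\mathfrak{X}_1^0$ restriction to be exactly what is needed for this to close.

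For items $(b)$ and $(c)$, the plan mirrors the strategy announced for Theorem~\ref{T1}\,$(b)$–$(c)$: start from a vector field in $\mathfrak{X}_2^0$ having a center (for $(b)$) or two centers (for $(c)$) in $\Delta$, and perturb inside $\mathfrak{X}_2^0$ to unfold limit cycles via the Lyapunov quantities, following the approach of~\cite{GinGouTor2021}. Concretely, for $(b)$ I would locate a one-parameter (or few-parameter) subfamily of $\mathfrak{X}_2^0$ with a weak focus of high order at a point of $\Delta$, compute the first few Lyapunov constants $L_1,L_2,L_3,\dots$ as functions of the parameters, and check that, when expanded in the perturbation parameters, consecutive constants are ``independent'' enough that one can choose signs alternately $L_k>0$, $L_{k-1}<0$, etc., to bifurcate three limit cycles from the weak focus (a Hopf bifurcation of codimension $3$). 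Because of the extra constraints $a_{2,0}=b_{0,2}=0$, only three (rather than five, as in Theorem~\ref{T1}) can be expected. For $(c)$, I would instead exhibit a symmetric configuration in $\mathfrak{X}_2^0$ — for instance one invariant under a reflection or under $(x,y)\mapsto(-x,-y)$ composed with a parameter change — with two centers symmetric to each other in $\Delta$, so that a single Hopf bifurcation of codimension one at each center (forced to be simultaneous by the symmetry) produces one limit cycle around each singularity, giving two limit cycles in two disjoint nests.

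The main obstacle I anticipate is computational rather than conceptual: verifying that the truncated Lyapunov quantities for item $(b)$ genuinely have the sign-changing / independence structure needed to unfold three limit cycles, within the restricted family $\mathfrak{X}_2^0$, since the two vanishing-coefficient constraints remove precisely the degrees of freedom that were available in the proof of Theorem~\ref{T1}\,$(b)$. One must choose the unperturbed center and the perturbation directions carefully so that enough Lyapunov constants remain reachable; checking this typically requires explicit (computer-assisted) computation of $L_1,L_2,L_3$ and of the rank of the relevant Jacobian with respect to the perturbation parameters. Item $(a)$, by contrast, should be short once the right integrating factor is spotted, and item $(c)$ should follow cleanly from a well-chosen symmetry.
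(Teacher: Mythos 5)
Your proposal is correct and follows essentially the same route as the paper: item $(a)$ via the separable structure of $\mathfrak{X}_1^0$ and the integrating factor $1/\bigl((4x^2-1)(4y^2-1)\bigr)$ making the system Hamiltonian off the invariant lines (the paper's Proposition~\ref{pr:sep}), item $(b)$ by perturbing a center of $\mathfrak{X}_2^0$ and using the linear parts of the Lyapunov quantities in the spirit of \cite{GinGouTor2021} (the paper's Proposition~\ref{P7}, with $k=2$, $\ell=1$), and item $(c)$ by a configuration symmetric under $(x,y,t)\mapsto(-x,-y,-t)$ with two centers at $(\pm1/4,0)$ undergoing simultaneous Hopf bifurcations (the paper's Proposition~\ref{P9}). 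The only missing ingredient is the explicit computations for $(b)$ and $(c)$, which you correctly identify as the computational core of the argument.
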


The study of upper bounds of the number of limit cycles  for vector fields either in  $\mathfrak{X}_2$ or in $\mathfrak{X}_2^0,$ even in particular cases, is an extremely difficult problem. As far as we know,  only some very particular subsystems in $\mathfrak{X}_2^0$ are studied in~\cite{MA} by transforming them into systems of Kukles type and then applying the Bendixson-Dulac criterion. In this work, we do not approach this problem.

The paper is organized as follows. Section~\ref{Sec1} is devoted to give a concise motivation of the interest for studying vector fields in $\mathfrak{X}_d$ or $\mathfrak{X}_d^0$ in Game Theory. In Section~\ref{Sec3} we present the main tools that will be used throughout this paper. In Section~\ref{Sec4} we study some center conditions in $\mathfrak{X}_2$ and $\mathfrak{X}_d^0$. The bifurcation of limit cycles from one singularity is studied in Section~\ref{Sec5}. In Section~\ref{Sec6} we study the simultaneously bifurcation of limit cycles from two singularities. Finally, the main results are proved in Section~\ref{Sec7}.
	
\section{Vector fields in $\mathfrak{X}_d$ and Game Theory}\label{Sec1}

In 1973 Smith and Price \cite{SmiPri1973} introduced concepts of Game Theory into the Biology, coining the notion of \emph{Evolutionary Stable Strategies} (ESS). Roughly speaking, given a game between two or more players (modeling for example a conflict between different species of animals), an ESS is a strategy such that if most of the players follows it, then there is no disruptive strategy that would give higher advantages for the other players. That is, the best strategy for the other players is to also follow the ESS.  In other words \cite{Frid}, an ESS is a ``uninvadable'' state of the population, meaning that if the populations are already in an ESS, then any deviant behavior will eventually disappear under natural selection.

In 1978 Taylor and Jonker \cite{TayJon1978} introduced concepts of Ordinary Differential Equations (ODE) into the field of ESS in such a way that the game is now modelled by a system of ODE, see also~\cite{HS}. In particular, if the game has two players then the models result in a planar polynomial vector field. Under these concepts Hofbauer et al~\cite{HSS} proved in the planar case (and Palm~\cite{Palm} in the higher dimensional case) that every ESS is an asymptotically stable singularity, with the converse not holding.

However as observed by Hofbauer et al~\cite{HSS}, under the language  of ODE there is nothing special distinguishing ESS from asymptotic stability. Hence, it would be more appropriate to study asymptotic stability rather than ESS. In particular, in this context a stable limit cycle can be interpreted as an \emph{oscillating stable strategy}. That is, an ``uninvadable'' oscillating state of the population which eradicates any deviant behavior.

Therefore as anticipated in the introduction, since the simplest version of the game (i.e. $\mathfrak{X}_1$) can have at most one limit cycle (and one singularity in $\Delta$), in this paper we allow more complexity to the game and provide a first approach for the number and the configuration of its oscillating states. 

It is also noteworthy to observe that since every asymptotically stable singularity has index $+1$, Proposition~\ref{T1.5} also provides an upper bound and the possible configurations of the \emph{stable strategies} of the game. 

We now present a briefly introduction of the model in the two players situation. Let $\Gamma_1$ and $\Gamma_2$ be two players and $\{X_1,X_2\}$, $\{Y_1,Y_2\}$ be their \emph{pure strategies}. We denote by $a_{ij}^*\in\mathbb{R}$ the \emph{payoff} of the pure strategy $X_i$ against the pure strategy $Y_j$, and by $b_{ij}^*\in\mathbb{R}$ the payoff of $Y_i$ against $X_j$. Given a probabilistic vector of dimension two
	\[x=(x_1,x_2)\in S^2:=\{(x_1,x_2)\in\mathbb{R}^2\colon x_1\geqslant0,\; x_2\geqslant0,\; x_1+x_2=1\},\]
we associate to it the \emph{mix strategy} $x_1X_1+x_2X_2$. Similarly, given $y\in S^2$, we associate the mix strategy $y_1Y_1+y_2Y_2$. Let
	\[A^*=\left(\begin{array}{cc} a_{11}^* & a_{12}^* \\ a_{21}^* & a_{22}^* \end{array}\right), \quad B^*=\left(\begin{array}{cc} b_{11}^* & b_{12}^* \\ b_{21}^* & b_{22}^* \end{array}\right),\]
be the \emph{payoff matrices}. Given $y\in S^2$ we define
\begin{equation}\label{2}
	\left<e_i,A^*y\right>=a_{i1}^*y_1+a_{i2}^*y_2,
\end{equation}
as the payoff of $X_i$ against $y_1Y_1+ y_2Y_2$ (here, $\left<\cdot,\cdot\right>$ denotes the standard inner product of $\mathbb{R}^2$). Similarly, given $x\in S^2$, the payoff of the pure strategy $Y_i$ against the mix strategy $x_1X_1+x_2X_2$ is given by,
\[\left<e_i,B^*x\right>=b_{i1}^*x_1+b_{i2}^*x_2.\]
Furthermore, the \emph{average payoff} of $x_1X_1+x_2X_2$ against $y_1Y_1+y_2Y_2$ is given by,
\begin{equation}\label{3}
	\left<x,A^*y\right>=a_{11}^*x_1y_1+a_{12}^*x_1y_2+a_{21}^*x_2y_1+a_{22}^*x_2y_2,
\end{equation}
while the average payoff of $y_1Y_1+y_2Y_2$ against $x_1X_1+x_2X_2$ is given by,
\[\left<y,B^*x\right>=b_{11}^*x_1y_1+b_{12}^*x_1y_2+b_{21}^*x_2y_1+b_{22}^*x_2y_2.\]
The dynamic between players $\Gamma_1$ and $\Gamma_2$ is defined by the ordinary system of differential equations,
\begin{equation}\label{4}
	\begin{array}{ll }
		\dot x_1=x_1\bigl(\left<e_1,A^*y\right>-\left<x,A^*y\right>\bigr), & \quad\dot y_1=y_1\bigl(\left<e_1,B^*x\right>-\left<y,B^*x\right>\bigr), \vspace{0.2cm} \\
		\dot x_2=x_2\bigl(\left<e_2,A^*y\right>-\left<x,A^*y\right>\bigr), & \quad\dot y_2=y_2\bigl(\left<e_2,B^*x\right>-\left<y,B^*x\right>\bigr). 
	\end{array}
\end{equation} 
In other words, the rate of change of the population $x_i$ depends on the difference between the payoffs \eqref{2} and \eqref{3} (the bigger the difference, the bigger the superiority of strategy $X_i$), and on the size of the population itself, as a percentage of the total population $x_1+x_2$. Since $x_1+x_2=y_1+y_2=1$, it follows that to describe the dynamic between players $\Gamma_1$ and $\Gamma_2$, it is necessary only two variables, namely $x=x_1$ and $y=y_1$. Therefore, if we replace $x_2=1-x$ and $y_2=1-y$, then we can simplify system \eqref{4} and thus obtain the planar ordinary system of differential equations given by
\begin{equation}\label{5}
	\begin{array}{l}
		\dot x=x(x-1)\bigl(a_{22}^*-a_{12}^*+(a_{12}^*+a_{21}^*-a_{11}^*-a_{22}^*)y\bigr), \vspace{0.2cm} \\
		\dot y=y(y-1)\bigl(b_{22}^*-b_{12}^*+(b_{12}^*+b_{21}^*-b_{11}^*-b_{22}^*)x\bigr). 
	\end{array}
\end{equation}
Applications of such model can be found in a myriad of research areas, such as Parental Investing, Biology, Genetics, Evolution, Ecology, Politics and Economics. See \cites{Hines,MH,SchSig,H,Bomze,AccMarOvi,CanBer,SSHW,XiaoYu} and the references therein. The chasing for more realistic models demand the payoff coefficients to depend on the weight given to strategies $X_i$ and $Y_j$, rather than being constant, i.e. $a_{ij}^*=a_{ij}^*(x,y)$ and $b_{ij}^*=b_{ij}^*(x,y)$. Hence, if we assume that each payoff $a_{ij}^*$, $b_{ij}^*$ is a polynomial of degree at most $d$, then system \eqref{5} becomes
\begin{equation}\label{6}
	\dot x=x(x-1)f^*(x,y), \quad \dot y=y(y-1)g^*(x,y),
\end{equation}
with $f^*$ and $g^*$ polynomials of degree at most $d+1$. Let
	\[Q=\{(x,y)\in\mathbb{R}^2\colon 0<x<1,\;0<y<1\},\]
and observe that the model is well defined only in the closed square $\overline{Q}$ (where $\overline{Q}$ denotes the topological closure of $Q$). Since we are interested only in the dynamics inside $\overline{Q}$, we can translate the origin of~\eqref{6} to the point $\left(-{1}/{2},-{1}/{2}\right)$ and thus, after a change in the time variable, obtain system~\eqref{7}, that has the invariant square $\Delta.$
	
Observe that if each payoff function $a_{ij}^*$, $b_{ij}^*$ is a polynomial of degree $d-1$, then $f$ and $g$ are polynomials of degree $d$. However, it follows from \eqref{5} that not every $X\in\mathfrak{X}_d$ is realizable by some choice of payoff functions of degree $d-1$. More precisely,  $f$ and $g$ of the form \eqref{eq:fg} are  realizable by some choice of payoff functions of degree $d-1$ if, and only if, $a_{d,0}=0$ and $b_{0,d}=0$. This is precisely the class  $X\in\mathfrak{X}_d^0$ introduced before Theorem~\ref{T2}.

\section{Preliminaries}\label{Sec3}

\subsection{Lyapunov quantities}\label{Sec3.1}

Let $X$ be a planar polynomial vector field. We say that a non-constant analytic function $H\colon\mathbb{R}^2\to\mathbb{R}$ is a \emph{first integral} of $X$ if the orbits of $X$ are contained in the level sets of $H$. That is, if $(x(t),y(t))$ is an orbit of $X$, $t\in I$, then there is $c\in\mathbb{R}$ such that $H(x(t),y(t))=c$, for all $t\in I$. Observe that $H$ is a first integral of $X$ if, and only if,
\begin{equation}\label{23}
	P\frac{\partial H}{\partial x}+Q\frac{\partial H}{\partial y}=0.
\end{equation}
Suppose now that $X$ has a singularity at the origin. Let also $DX(x,y)$ denote the Jacobian matrix of $X$ at the point $(x,y)\in\mathbb{R}^2$. We say that the origin is a \emph{linear center} if $\det DX(0,0)>0$ and $\operatorname{Trace}(DX(0,0))=0$. That is, the origin is a linear center if the eigenvalues of $DX(0,0)$ are given by $\pm i\beta$, where $i$ is the complex unity satisfying $i^2=-1$ and $\beta>0$. 
Hence, after a linear change of variables and time we can suppose that $X=(P,Q)$ is given by
\begin{equation}\label{21}
	P(x,y)=-y+F(x,y), \quad Q(x,y)=x+G(x,y),
\end{equation}
with $F$ and $G$ without neither constants nor linear terms. Suppose that the origin is a center of \eqref{21}. Notice that the linear part of \eqref{21} has nonzero determinant and   then it is said that the singularity is a \emph{non-degenerated} center of \eqref{21}. Except in Theorem~\ref{T1}, in this paper by \emph{center} we will mean a non-degenerated center.

\begin{theorem}[Poincar\'e-Lyapunov Theorem]\label{PL}
	Let $X=(P,Q)$ be given by \eqref{21}. Then the origin is a center if, and only if, it admits a local first integral of the form,
	\begin{equation}\label{22}
		H(x,y)=x^2+y^2+\sum_{i+j=3}^{\infty}c_{ij}x^iy^j.
	\end{equation}
	Moreover, the existence of a formal first integral of the above form implies in the existence of a local first integral of the same form.
\end{theorem}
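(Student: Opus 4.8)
The plan is to treat the two implications of the equivalence and the ``moreover'' clause together, shuttling between three objects: analytic first integrals of the normalized form \eqref{22}, the order-by-order solution of \eqref{23} together with its obstructions (the \emph{focal quantities} $V_1,V_2,\dots$), and the small periodic orbits of $X$. I would start with the elementary implication: if $H$ of the form \eqref{22} is a convergent first integral, then $D^2H(0,0)=2\,\mathrm{Id}$, so $H$ has a strict local minimum at the origin and $\nabla H=(2x,2y)+\mathrm{h.o.t.}$ is nonzero on a punctured neighbourhood of it; hence for $0<c\ll1$ the level set $\gamma_c=\{H=c\}$ is a topological circle, which is invariant (orbits lie in level sets of $H$) and free of equilibria, since $\det DX(0,0)=1\ne0$ makes the origin the only equilibrium of $X$ near itself and $0\notin\gamma_c$. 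A fixed-point-free flow on a circle has every orbit periodic, so $\gamma_c$ is a single periodic orbit; as $c\to0^+$ the curves $\gamma_c$ fill a punctured neighbourhood of the origin, which is therefore a center.

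For the remaining directions I set up the formal construction. Write $X=X_1+X_2+\cdots$ with $X_1=(-y,x)$ and $X_j$ homogeneous of degree $j$, and look for $H=H_2+H_3+\cdots$ with $H_2=x^2+y^2$ and $H_j$ homogeneous of degree $j$. Then \eqref{23} splits by degree into $\mathcal{L}H_d=-\sum_{k=2}^{d-1}X_k\cdot\nabla H_{d+1-k}$, where $\mathcal{L}h:=-y\,\partial_xh+x\,\partial_yh$. Since $\mathcal{L}(z^a\bar z^b)=i(a-b)\,z^a\bar z^b$ for $z=x+iy$, on degree-$d$ homogeneous polynomials $\mathcal{L}$ is invertible when $d$ is odd, while for $d=2k+2$ its kernel and cokernel are spanned by $(x^2+y^2)^{k+1}$. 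Hence the odd-degree equations always solve, and an even-degree one solves after subtracting the appropriate multiple $V_k(x^2+y^2)^{k+1}$ of the cokernel generator; imposing a normalization (e.g. that $H_{2k}$ has no $(x^2+y^2)^{k}$-component) makes the recursion deterministic, defines the real focal quantities $V_k$ intrinsically, and produces $P\,\partial_xH+Q\,\partial_yH=\sum_{k\ge1}V_k(x^2+y^2)^{k+1}$. A short bookkeeping step — bringing any candidate formal first integral of the form \eqref{22} into this normal form by a formal substitution $H\mapsto H+\sum_{j\ge2}\phi_jH^{\,j}$ — then shows that \textbf{a formal first integral of the form \eqref{22} exists if and only if $V_k=0$ for every $k$.}

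It remains to link this with the center property and with convergence. If the origin is a center but $V_m$ is the first nonzero focal quantity, pick a polynomial truncation $H^{(N)}$ of the constructed series with $N$ large; along orbits, $\tfrac{d}{dt}H^{(N)}=V_m(x^2+y^2)^{m+1}+\mathrm{h.o.t.}+O(\|(x,y)\|^{N+1})$ is sign-definite near the origin, so integrating it over an arbitrarily small periodic orbit (such orbits exist and shrink to the origin) produces a nonzero increment of $H^{(N)}$ around a closed loop, a contradiction; hence a center forces all $V_k=0$. Conversely, when all $V_k=0$ the recursion solves \eqref{23} exactly, and to obtain a convergent solution I would run Lyapunov's majorant (Cauchy) estimate on the recursion, exploiting that $\mathcal{L}^{-1}$ has uniformly bounded norm on the relevant subspaces — its eigenvalues $i(a-b)$ satisfy $|a-b|\ge1$ — to get geometric control of the coefficients of $H$ and hence convergence on a disc; by the first paragraph, the resulting analytic $H$ of the form \eqref{22} makes the origin a center. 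Stringing these together yields: the origin is a center $\iff$ all $V_k=0$ $\iff$ there is a formal first integral of the form \eqref{22} $\iff$ there is a convergent one, which is precisely the asserted equivalence together with the ``moreover''.

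I expect the main obstacle to be the convergence step — Lyapunov's majorant argument — because that is the only place where the soft formal recursion has to be upgraded to a quantitative analytic estimate; the uniform bound on $\mathcal{L}^{-1}$ is the feature that makes the majorant close, but organizing the bookkeeping is delicate. A secondary point requiring care is the intrinsic well-definedness of the focal quantities $V_k$ (equivalently, of the condition that they all vanish) independently of the harmless freedom left in the recursion, which the normalization in the second paragraph is meant to pin down.
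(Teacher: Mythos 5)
The paper does not prove this theorem; it is quoted as a classical result with pointers to \cite[Theorem 12.6]{IlyYak2008} and \cite[Theorem 3.2.9]{RomSha2009}, so the comparison below is with the standard proofs in those references. Most of your outline is sound: the implication ``convergent $H$ of the form \eqref{22} $\Rightarrow$ center'' via Morse-type level sets, the diagonalization of $\mathcal{L}$ on $z^a\bar z^b$ with eigenvalues $i(a-b)$, the definition of the obstructions $V_k$, the equivalence ``formal first integral of the form \eqref{22} exists $\iff$ all $V_k=0$'' (your composition bookkeeping $H\mapsto H+\sum\phi_jH^j$ is the right way to handle the kernel freedom), and ``center $\Rightarrow$ all $V_k=0$'' via integrating $\tfrac{d}{dt}H^{(N)}$ over a small periodic orbit are all correct.

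The genuine gap is the convergence step, and it is not merely a delicate bookkeeping issue: the majorant argument you describe does not close. A uniform bound $\|\mathcal{L}^{-1}\|\le 1$ on the complement of the kernel does not compensate the loss of one degree coming from $\nabla H_{e}$ in the right-hand side of the recursion. Quantitatively, if $u_d$ majorizes the coefficient norm of $H_d$, the recursion gives $u_d\lesssim \sum_{e<d} e\,\rho^{e-d}u_e$, whose generating function satisfies an Euler-type relation $U(t)= t^2+c\,t^2\,U'(t)/(1-t/\rho)$; the formal solution of such an equation has factorially growing coefficients, so the majorant only yields $u_d\lesssim d!\,B^d$. This is exactly the Siegel-boundary situation: the divisors $|a-b|$ do not grow with $a+b$ along the diagonal $a\approx b$ (unlike the Poincar\'e-domain case, where $|\langle\lambda,m\rangle-\lambda_j|\gtrsim|m|$ makes Poincar\'e's majorant work), and indeed formal normalizing series in this setting generically diverge. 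The standard proofs therefore take a different route for precisely this step: one shows that the Poincar\'e displacement map $d(r)=\Pi(r)-r$ on a transversal is \emph{analytic} at $r=0$ and that its first nonzero Taylor coefficient is a positive multiple of the first nonzero $V_k$; hence all $V_k=0$ forces $d$ to be flat, so $d\equiv 0$ and the origin is a center. The analytic first integral is then built geometrically from the closed orbits (constant on orbits, equal to $r_0^2$ on the transversal, with analyticity at the origin checked by inverting $r=f(\theta,r_0)$ and matching the polar expansion against the formal series). Without replacing your majorant step by such an argument, neither the ``only if'' direction of the equivalence nor the ``moreover'' clause is established.
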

 
For a proof of Theorem~\ref{PL} see  \cite[Theorem $12.6$]{IlyYak2008} or \cite[Theorem $3.2.9$]{RomSha2009}. The necessary conditions for having a center at the origin is obtained by looking for a formal power series of the form \eqref{22} satisfying \eqref{23}. More precisely, to obtain the first $N$ necessary conditions we write down \eqref{22} up to order $2N+2$,
\begin{equation}\label{24}
	H(x,y)=x^2+y^2+\sum_{i+j=3}^{2N+2}c_{ij}x^iy^j.
\end{equation}
Then we replace \eqref{24} at \eqref{23} and, for each $k\in\{3,\dots,2N+2\}$, we equate to zero the coefficients of the monomials of degree $k$. Then, starting at $k=3$, we solve the equations recurrently for each $k\in\{3,\dots,2N+2\}$. It can be seen, that it is always possible to obtain a formal power series of the form \eqref{22} satisfying,
\begin{equation}\label{25}
	P\frac{\partial H}{\partial x}+Q\frac{\partial H}{\partial y}=\sum_{k=1}^{\infty}V_k(x^2+y^2)^{k+1},
\end{equation}
see for instance Songling~\cite{Son1984}. The constant $V_k\in\mathbb{R}$ is the $k$-th \emph{Lyapunov constant}. If $V_k=0$ for every $k\geqslant1$, then it follows from \eqref{25} that $H$ is a formal first integral and thus it follows from Theorem~\ref{PL} that the origin is a center. On the other hand, if some $V_k\neq0$, then the polynomial obtained from \eqref{24} (i.e. by truncating $H$ for some $N\geqslant1$ big enough) is a \emph{Lyapunov function} (see \cite[Section $1.8$]{DumLliArt2006}) and thus the origin is a focus. In this case, the stability of the focus is determined by the sign of $V_{k_0}$, where $k_0\geqslant1$ is the first index such that $V_{k_0}\neq0$. Although the constants $c_{ij}$ of \eqref{24} with $i+j=k$ and $k$ even cannot be uniquely determined, and thus the Lyapunov constants themselves cannot be uniquely determined, it known that $V_k$ is uniquely determined modulo the ideal generated by $V_1,\dots, V_{k-1}$. More precisely, let $X$ be given by \eqref{21} and let,
	\[F(x,y)=\sum_{i+j=2}^{d}a_{ij}x^iy^j, \quad G(x,y)=\sum_{i+j=2}^{d}b_{ij}x^iy^j.\]
Let $\mathcal{I}=\mathbb{R}[a_{20},a_{11},\dots,b_{1,d-1},b_{0,d}]$ be the ring of real polynomials in the coefficients of $X$. Let also $V_1,\dots, V_{k-1}$ be a choice of Lyapunov constants and let $I=\left<V_1,\dots, V_{k-1}\right>\subset\mathcal{I}$ be the ideal generated by them. If $V_k$ and $V_k'$ are two choices of the $k$-th Lyapunov constant, then $V_k-V_k'\in I$. In particular, if $V_1=\dots=V_{k-1}=0$, then $V_k=0$ if, and only if, $V_k'=0$. See again Songling~\cite{Son1984}. 

The center conditions can be obtained by a different, but very related way. For every $k\in\mathbb{N}$, $k\geqslant2$, let $\psi_{2k}\colon\mathbb{R}^2\to\mathbb{R}$ be a polynomial of homogeneous degree $2k$ such that,
	\[\int_{0}^{2\pi}\psi_{2k}(\cos\theta,\sin\theta)\;d\theta\neq0.\]
It follows from Suba \cite[Theorem $4.1$]{Suba1997} that instead of \eqref{25} we can seek for a formal first integral $H$ satisfying,
\begin{equation}\label{26}
	P\frac{\partial H}{\partial x}+Q\frac{\partial H}{\partial y}=\sum_{k=2}^{\infty}L_{k-1}\psi_{2k}.
\end{equation}
The constant $L_k\in\mathbb{R}$ is the $k$-th \emph{Lyapunov quantity} (observe that it depends on the choice of the sequence $\{\psi_{2k}\}_{k\geqslant2}$). It follow from Suba \cite[Section $5$]{Suba1997} that $L_1=\dots=L_{k-1}=0$ and $L_k\neq0$ if, and only if, $V_1=\dots=V_{k-1}$ and $V_k\neq0$. Therefore, if $L_k=0$ for every $k\in\mathbb{N}$, then $V_k=0$ for every $k\in\mathbb{N}$ and thus the origin is a center. On the other hand, if there is a minimal $k\in\mathbb{N}$ such that $L_k\neq0$, then $V_k\neq0$ and thus the origin is a focus. In this latter case, although the origin is a focus, we observe that the polynomial obtained by truncating the formal solution $H$ of \eqref{26} is not necessarily a Lyapunov function. 

It follows from \eqref{26} that the quantities $L_k$ are polynomials in the coefficients of $X$, i.e. $L_k\in\mathcal{I}$, for every $k\in\mathbb{N}$. Hence, for $N$ big enough and for a given set of functions $\{\psi_{2k}\}_k,$ the above algorithm determines also a set of necessary algebraic conditions $\{L_k=0\colon k\in\{1,\dots,N\}\}$ for the origin be a center. With this collection of necessary conditions in hands, we then apply a set of tools (which will be described in Sections~\ref{Sec3.3}, \ref{Sec3.4} and~\ref{Sec3.4b}) to discovery whether these conditions are also sufficient for the origin to be a center or not. Similar to Cruz et al \cite{CruRomTor2020}, to simplify the calculations, in this paper we consider $\psi_{2k}(x,y)=x^{2k}$ and thus we seek for a formal series $H$ satisfying,
\begin{equation*}
	P\frac{\partial H}{\partial x}+Q\frac{\partial H}{\partial y}=\sum_{k=1}^{\infty}L_kx^{2k+2}.
\end{equation*}
It follows from Suba \cite[p. $9$]{Suba1997} that for this particular choice of $\psi_{2k}$, if there is a minimal $k\in\mathbb{N}$ such that $L_k\neq0$ (and thus $V_k\neq0$), then $\operatorname{sgn}(L_k)=\operatorname{sgn}(V_k)$. Therefore, the  origin is a stable (resp. unstable) focus if $L_k<0$ (resp. $L_k>0$). 

\subsection{Bifurcation of limit cycles}\label{Sec3.2}

Let $X_{\mu}=(P_c(x,y;\mu),Q_c(x,y;\mu))$ be a family of vector fields of degree $n$ depending on a parameter $\mu\in\mathbb{R}^m$, such that the origin is always a center. Consider the polynomial perturbation $Y_{\mu,\alpha,\Lambda}=(P,Q)$ of $X_{\mu}$ given by
\begin{equation}\label{27}
	\begin{array}{l}
		P(x,y;\mu,\alpha,\Lambda)=P_c(x,y;\mu)+\alpha y+F(x,y;\alpha,\Lambda), \vspace{0.2cm} \\ Q(x,y;\mu,\alpha,\Lambda)=Q_c(x,y;\mu)+\alpha x+G(x,y;\alpha,\Lambda),
	\end{array}
\end{equation}
with $\alpha\in\mathbb{R}$, $\Lambda\in\mathbb{R}^p$ and such that $F$ and $G$ are polynomials of degree $n$ in $x$ and $y$, with neither constant nor linear terms, and such that $F(x,y;0,0)=G(x,y;0,0)=0$. More precisely, set 
	\begin{equation}\label{eq:per} F(x,y;0,\Lambda)=\sum_{i+j=2}^{n}a_{ij}x^iy^j, \quad G(x,y;0,\Lambda)=\sum_{i+j=2}^{n}b_{ij}x^iy^j,\end{equation}
with $\Lambda=(a_{20},a_{11},\dots,b_{1,n-1},b_{0,n})\in\mathbb{R}^p$ with $p={n^2+3n-4}.$

\begin{theorem}[Theorem $3.1$ of \cite{GinGouTor2021}]\label{GGT}
	For $\alpha=0$, let $L_j=L_j(\mu,\lambda)$ be $j$-th Lyapunov quantity of system~\eqref{27} when $\Lambda=M \lambda,$ where $\lambda\in\mathbb{R}^k$ and $M=M(\mu)$ is a  $p\times k$ matrix depending on $\mu$ and with rank~$k$. Assume $k$, $\ell$ and $M$ are such that
	\begin{equation}\label{28}
		L_j(\mu,\lambda)=\left\{\begin{array}{l}
								\displaystyle \lambda_j + O_2(\lambda), \text{ for } j\in\{1,\dots,k-1\}, \vspace{0.2cm} \\
								\displaystyle \sum_{l=1}^{k-1}g_{j,l}(\mu)\lambda_l+f_{j-k}(\mu)\lambda_k+O_2(\lambda), \text{ for } j\in \{k,\dots,k+\ell\},
							\end{array}\right.
	\end{equation}
	where $O_2(\lambda)$ denote all the monomials of degree equal to or higher than $2$ in $\lambda\in\mathbb{R}^k$, with coefficients being function on the parameter $\mu\in\mathbb{R}^m$. If there exists $\mu_0\in\mathbb{R}^m$ such that $f_0(\mu_0)=\dots=f_{\ell-1}(\mu_0)=0$, $f_\ell(\mu_0)\neq0$ and the jacobian matrix of $(f_0,\dots,f_{\ell-1})$ has rank $\ell$ at $\mu=\mu_0$, then system \eqref{27} has $k+\ell$ small amplitude limit cycles bifurcating from the origin.
\end{theorem}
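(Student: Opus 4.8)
The plan is to use the implicit function theorem to produce a point of the parameter space at which the origin is a weak focus of order exactly $k+\ell$ whose first $k+\ell-1$ Lyapunov quantities vanish with linearly independent gradients, and then to unfold this configuration in the standard Bautin-type way, the parameter $\alpha$ supplying the extra degree of freedom for the last limit cycle. The two blocks in~\eqref{28} are handled separately: the first already displays $L_1,\dots,L_{k-1}$ as near-coordinates, whereas the dependence carried by the second becomes effective only after one leaves the center locus $\lambda=0$.

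\emph{Normalizing the first block.} Since $L_j=\lambda_j+O_2(\lambda)$ for $j\in\{1,\dots,k-1\}$, for each fixed $(\mu,\lambda_k)$ the map $(\lambda_1,\dots,\lambda_{k-1})\mapsto(L_1,\dots,L_{k-1})$ is a local diffeomorphism at the origin, with differential the identity. Taking $(L_1,\dots,L_{k-1},\lambda_k,\mu)$ as new parameters, the quantities $L_1,\dots,L_{k-1}$ become genuine independent coordinates (so in particular they no longer depend on $\mu$), while $L_k,\dots,L_{k+\ell}$ keep the form $\sum_{l=1}^{k-1}\widetilde g_{j,l}(\mu)L_l+f_{j-k}(\mu)\lambda_k+O_2$ and still vanish whenever all the small parameters do, because $\lambda=0$ is a center for every $\mu$.

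\emph{Activating the second block off the center locus.} Fix a small $\varepsilon\neq0$ and put $\lambda_k=\varepsilon$, $L_1=\dots=L_{k-1}=0$. Then $L_{k+i}=f_i(\mu)\varepsilon+O(\varepsilon^2)$ for $i=0,\dots,\ell$, so $(\mu,\varepsilon)\mapsto\varepsilon^{-1}(L_k,\dots,L_{k+\ell-1})$ extends smoothly across $\varepsilon=0$ to $(f_0(\mu),\dots,f_{\ell-1}(\mu))$, vanishes at $(\mu_0,0)$, and has $\mu$-differential there equal to the rank-$\ell$ Jacobian of $(f_0,\dots,f_{\ell-1})$ at $\mu_0$. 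The implicit function theorem then gives, for each small $\varepsilon\neq0$, a value $\mu(\varepsilon)\to\mu_0$ with $L_k=\dots=L_{k+\ell-1}=0$, while $L_{k+\ell}=f_\ell(\mu(\varepsilon))\varepsilon+O(\varepsilon^2)=f_\ell(\mu_0)\varepsilon\,(1+o(1))\neq0$ since $f_\ell(\mu_0)\neq0$. By the correspondence between the $L_j$ and the Lyapunov constants recalled in Section~\ref{Sec3.1}, the origin is then a weak focus of order exactly $k+\ell$.

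\emph{Unfolding, and the main obstacle.} Near such a configuration the map $(L_1,\dots,L_{k+\ell-1})$, regarded as a function of $(L_1,\dots,L_{k-1},\mu)$, has block lower-triangular Jacobian: the upper-left $(k-1)\times(k-1)$ block is the identity, the upper-right block vanishes (the first $k-1$ coordinates do not depend on $\mu$), and the lower-right block is $\varepsilon$ times the Jacobian of $(f_0,\dots,f_{\ell-1})$ at $\mu_0$ plus lower-order terms, hence of rank $\ell$ for $\varepsilon$ small. Thus $L_1,\dots,L_{k+\ell-1}$ can be prescribed arbitrarily in a neighborhood of the origin; choosing them with alternating signs and $0<|L_1|\ll\cdots\ll|L_{k+\ell-1}|\ll|\varepsilon|$ (so that $|L_{k+\ell-1}|\ll|L_{k+\ell}|$ as well) and using that $\operatorname{sgn}(V_j)=\operatorname{sgn}(L_j)$, the Poincar\'e displacement function acquires $k+\ell-1$ simple positive zeros near the origin; the remaining one is produced by the Hopf-type bifurcation governed by $\alpha$, giving $k+\ell$ limit cycles in total. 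The hard part is precisely this last passage: the rank-$\ell$ hypothesis on $(f_0,\dots,f_{\ell-1})$ is invisible on the center locus $\lambda=0$, where all $\mu$-derivatives of $L_k,\dots,L_{k+\ell}$ vanish, so one is forced to work at a point with $\varepsilon\neq0$, where that rank surfaces only through the mixed second derivatives $\partial^2L_{k+i}/\partial\mu\,\partial\lambda_k\approx f_i'(\mu)$. Making this quantitative, verifying that the coupling terms $\widetilde g_{j,l}$ between the two blocks do not spoil the submersion (which is what forces the hierarchy $|L_l|\ll|\varepsilon|$), and being careful with the count in the final displacement-map argument, is where the work concentrates.
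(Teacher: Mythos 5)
This theorem is not proved in the paper: it is imported verbatim (up to the added $\alpha$-dependence) from Gin\'e--Gouveia--Torregrosa \cite{GinGouTor2021}, and the paper only remarks that the extra $\alpha$ does not alter the argument since everything is computed at $\alpha=0$. Your reconstruction follows the same standard route as that reference --- normalize the first block, use $\lambda_k=\varepsilon$ and the implicit function theorem on $\varepsilon^{-1}(L_k,\dots,L_{k+\ell-1})$ to reach a weak focus of order $k+\ell$, then unfold with the trace --- and is essentially correct; the only place to be slightly more careful is the final step, where the relation $\operatorname{sgn}(V_j)=\operatorname{sgn}(L_j)$ holds only for the first nonvanishing quantity, so the $k+\ell-1$ sign-alternating perturbations should be performed one at a time respecting the hierarchy you already impose.
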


\begin{remark}%\label{Remark1} 
	To apply Theorem~\ref{GGT} the most difficult part is to know the values $k$, $\ell$ and the linear change of parameters given by the matrix $M.$ In fact, they can not be determined a priory and they are obtained from the computations of the Taylor expansions of order one  of the Lyapunov quantities in the parameters $\Lambda\in\mathbb{R}^p,$ when $\alpha=0.$ In fact, it can also be applied when  $F$ or $G$ are not all polynomials of degree $n$ given in~\eqref{eq:per} but $\Lambda$ is any linear vectorial subspace of dimension $q,$ with  $k\le q<p.$ Finally, observe that $k$ also can be $1$ and in this situation the first row in~\eqref{28} is empty and all the functions $g_{j,l}$ are identically 0.
\end{remark}

\begin{remark}
	Our statement of Theorem~\ref{GGT} is adapted to our interests and  it is a little bit more general that the one given in~\cite{GinGouTor2021}.  We have added the dependence on $\alpha$ in the functions $F$ and $G.$ Nevertheless, its proof is essentially the same because it  relies on what happens when $\alpha=0.$ We have introduced this variation because, as we will see in Sections~\ref{Sec5} and~\ref{Sec6}, we need that our perturbations of a given system with $\Delta$ invariant have this set invariant as well. 
\end{remark}

\subsection{Reversibility}\label{Sec3.3}

Let $X$ be a $C^k$-planar vector field and $\varphi\colon\mathbb{R}^2\to\mathbb{R}^2$ a $C^k$-diffeomorphism such that $\varphi=\varphi^{-1}$ (also known as \emph{involution}, i.\,e. $\varphi^{2}=\operatorname{Id}$). We say that $X$ is $\varphi$-reversible if,
\begin{equation}\label{18}
	D\varphi(x)X(x)=-X(\varphi(x)),
\end{equation}
for every $x\in\mathbb{R}^n$. In simple words, $X$ is $\varphi$-reversible if after we apply the change of variables $x\mapsto\varphi(x)$ at $X$, we obtain $-X$. We denote by $S=\{x\in\mathbb{R}^n\colon\varphi(x)=x\}$  the set of the fixed points of $\varphi$. The simpler involutions are
	\[\varphi_1(x,y)=(-x,y), \quad \varphi_2(x,y)=(x,-y), \quad \varphi_3(x,y)=(y,x), \quad \varphi_4(x,y)=(-y,-x),\]
and their corresponding sets of the fixed points are
\[\begin{array}{ll}
	S_1=\{(x,y)\in\mathbb{R}^2\colon x=0\}, & \quad S_2=\{(x,y)\in\mathbb{R}^2\colon y=0\}, \vspace{0.2cm} \\
	S_3=\{(x,y)\in\mathbb{R}^2\colon y=x\}, & \quad S_4=\{(x,y)\in\mathbb{R}^2\colon y=-x\}.
\end{array}\]

We will use the following classical result.

\begin{theorem}[Poincaré reversibility criterion] 
	If $X$ is a $C^1$-planar  $\varphi_k$-reversible vector field, where $\varphi_k$ is one the above four involutions, and it has a periodic orbit intersecting $S_k,$ then it cannot be a limit cycle. In particular, if $p\in S_k$ and it is a singularity of type linear center  then $p$ is a center.
\end{theorem}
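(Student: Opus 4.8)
The plan is to show directly that any periodic orbit of a $\varphi_k$-reversible vector field $X$ that meets the fixed-point set $S_k$ is necessarily symmetric under $\varphi_k$, hence part of a continuum of closed orbits — in particular not isolated, so not a limit cycle. I would proceed as follows. First, recall the key consequence of the reversibility relation \eqref{18}: if $\gamma(t)$ is a solution of $\dot x = X(x)$, then $\sigma(t):=\varphi_k(\gamma(-t))$ is also a solution, since differentiating gives $\dot\sigma(t) = -D\varphi_k(\gamma(-t))\,\dot\gamma(-t) = -D\varphi_k(\gamma(-t))X(\gamma(-t)) = X(\varphi_k(\gamma(-t))) = X(\sigma(t))$, using \eqref{18}. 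So $\varphi_k$ sends orbits to orbits while reversing the time orientation.

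Next, let $\Gamma$ be a periodic orbit of $X$ with a point $p \in \Gamma \cap S_k$, so $\varphi_k(p) = p$. Parametrize $\Gamma$ by the solution $\gamma(t)$ with $\gamma(0) = p$. By the previous step, $\sigma(t) = \varphi_k(\gamma(-t))$ is a solution with $\sigma(0) = \varphi_k(p) = p = \gamma(0)$, so by uniqueness of solutions of the $C^1$ system we get $\varphi_k(\gamma(-t)) = \gamma(t)$ for all $t$. Thus $\varphi_k(\Gamma) = \Gamma$: the orbit is invariant under the involution $\varphi_k$. Now I would argue that such a $\Gamma$ cannot be a limit cycle: if it were, there would be a neighbourhood $\mathcal{U}$ of $\Gamma$ containing no other periodic orbit, and $\Gamma$ would be the $\omega$-limit (or $\alpha$-limit) set of nearby orbits spiraling toward it from, say, the outside. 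But applying $\varphi_k$ to such a spiraling orbit produces, by the orbit-reversing property, an orbit that spirals away from $\Gamma$ (the $\omega$-limit becomes an $\alpha$-limit), and since $\varphi_k$ is a homeomorphism fixing $\Gamma$ setwise it maps one side of $\Gamma$ to one side of $\Gamma$ (either the same side or the other). Tracking the stability type on each side, one finds the outer and inner return maps are conjugate to their own inverses, forcing the Poincaré return map at any transversal through a point of $S_k$ to be the identity — contradicting isolation. Equivalently and more cleanly: take a cross-section at $p\in S_k$ chosen so that $\varphi_k$ acts on it; the first-return map $\Pi$ satisfies $\Pi = \varphi_k^{-1}\circ\Pi^{-1}\circ\varphi_k$ near the fixed point corresponding to $p$, and a short computation with this functional equation shows $\Pi$ has a fixed point in every neighbourhood, so $\Gamma$ is not isolated.

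For the final sentence: if $p \in S_k$ is a singularity whose linearization $DX(p)$ has eigenvalues $\pm i\beta$ with $\beta>0$ (a linear center), then in a punctured neighbourhood of $p$ all orbits are either periodic or spiral; by the Poincaré–Lyapunov dichotomy the singularity is either a center or a focus. If it were a focus, orbits near $p$ would spiral, and the argument just given (applied to the return map on a $\varphi_k$-invariant section through $p$) shows the return map must be conjugate to its inverse, hence cannot be a strict contraction or expansion — contradiction. Therefore $p$ is a center.

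The main obstacle I anticipate is making the ``cross-section can be chosen $\varphi_k$-invariant'' step and the resulting functional equation for the return map fully rigorous: one must check that near a point $p\in S_k$ the transversal to $X$ can be taken inside $S_k$ (or at least mapped into itself by $\varphi_k$), which uses that $X(p)$ is either zero or transverse to $S_k$ — the latter because $\varphi_k$-reversibility forces $X$ to be anti-invariant along $S_k$, so on $S_k$ the vector field has no component tangent to $S_k$ unless it vanishes. Handling the degenerate possibility that $X$ is tangent to $S_k$ at an isolated point of the periodic orbit, and the global book-keeping of inside-versus-outside when $\varphi_k$ may swap the two sides of $\Gamma$, is where the care is needed; everything else is the standard uniqueness-of-solutions argument.
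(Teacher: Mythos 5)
The paper does not actually prove this statement: it is quoted as a classical result (with references to \cite{Tei1997Cod1,BBT,MedTei1,MedTei2} for extensions), so there is no ``paper proof'' to compare against. Your argument is the standard one and is correct in outline. The two essential steps are exactly right: (i) reversibility sends solutions $\gamma(t)$ to solutions $\varphi_k(\gamma(-t))$, so by uniqueness any orbit meeting the fixed-point line $S_k$ satisfies $\varphi_k(\gamma(-t))=\gamma(t)$ and is $\varphi_k$-symmetric; (ii) this forces the return map on $S_k$ to be the identity. A few of the places you flag as delicate are in fact automatic, and noting this would streamline the write-up considerably. First, transversality: at any non-singular $p\in S_k$ the identity $D\varphi_k(p)X(p)=-X(p)$ says $X(p)$ lies in the $(-1)$-eigenspace of the linear involution $D\varphi_k(p)$, which is transverse to $S_k$ (the $(+1)$-eigenspace); so $X$ can never be tangent to $S_k$ except at singularities, and $S_k$ itself is a section. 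Second, the inside/outside bookkeeping: $\varphi_k$ is a homeomorphism of the plane fixing the Jordan curve $\Gamma$ setwise, hence it maps the bounded complementary component to itself, so it cannot swap the two sides. Third, your functional equation: since $\varphi_k$ restricted to $S_k$ is the identity, it reads simply $\Pi=\Pi^{-1}$, and since the Poincar\'e return map of a planar flow on a section is monotone increasing, $\Pi^2=\operatorname{Id}$ forces $\Pi=\operatorname{Id}$ (if $\Pi(x)>x$ then $\Pi^2(x)>\Pi(x)>x$). The cleanest packaging, which you essentially have, is via the half-return map: a point $q\in S_k$ near $p$ flows forward to $S_k$ at time $\tau$ hitting $q'$, and symmetry gives $\gamma_q(-\tau)=\varphi_k(q')=q'=\gamma_q(\tau)$, so the orbit of $q$ is closed of period $2\tau$; thus all nearby orbits are periodic and $\Gamma$ is not isolated. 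The same computation applied to the monodromic singularity $p\in S_k$ with eigenvalues $\pm i\beta$ shows every nearby orbit through $S_k$ is closed, excluding the focus alternative and proving $p$ is a center.
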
	

It  can be extended by using other involutions (see \cite[Section 2]{Tei1997Cod1}), or for  more general   planar vector fields after introducing a wider class of reversible vector fields (\cite{BBT}), or in  higher dimensions (\cite{MedTei1,MedTei2}).

Suppose that $X$ has a linear center at the origin. It follows from Poincaré reversibility criterion that if $X$ is $\varphi_i$-reversible, for some $i\in\{1,2,3,4\}$, then the origin is a center. Since $S_1$ is given by the straight line $x=0$, if $X$ is $\varphi_1$-reversible, then we say that $X$ is \emph{reversible in relation to} $x=0$. Similarly, in the other cases. It follows from \eqref{18} that:
\begin{enumerate}[label=(\alph*)]
	\item $X=(P,Q)$ is reversible in relation to $x=0$ if, and only if,
		\[P(x,y)=P(-x,y), \quad Q(x,y)=-Q(-x,y);\]
	\item $X=(P,Q)$ is reversible in relation to $y=0$ if, and only if,
		\[P(x,y)=-P(x,-y), \quad Q(x,y)=Q(x,-y);\]
	\item $X=(P,Q)$ is reversible in relation to $y=x$ if, and only if,
		\[P(x,y)=-Q(y,x), \quad Q(x,y)=-P(y,x);\]
	\item $X=(P,Q)$ is reversible in relation to $y=-x$ if, and only if,
		\[P(x,y)=Q(-y,-x), \quad Q(x,y)=P(-y,-x).\]
\end{enumerate}
Therefore, if $X$ has a linear center at the origin, then a useful first approach is to check if $X$ satisfies one of the statements $(a)$, $(b)$, $(c)$ or $(d)$. If so, then the origin is a center.

\subsection{Darboux Theory of Integrability}\label{Sec3.4}

Let $X=(P,Q)$ be a planar polynomial vector field and let $R\colon\mathbb{R}^2\to\mathbb{R}$ be a non-constant analytic function. We say that $R$ is an \emph{integrating factor} of $X$ if,
\[\frac{\partial (PR)}{\partial x}+\frac{\partial (QR)}{\partial y}=0.\]
If $X$ has an integrating factor $R$, then $X$ has a first integral (see \cite[Section $8.2$]{DumLliArt2006}). Therefore, if $X$ has a linear center at the origin and if it admits an integrating factor in some neighborhood $U\subset\mathbb{R}^2$ of the origin, then it is a center. 

As for many problems it is not an easy task to find explicitly integrating factors.
The Darboux Theory of Integrability provides a systematic way to search for them when $X$ is polynomial and it  has several invariant algebraic curves satisfying some specific properties. Recall that it is said that $F(x,y)=0$ is an invariant algebraic invariant curve for $X$ if there exists a polynomial $K\colon\mathbb{R}^2\to\mathbb{R}$ such that,
\begin{equation*}%\label{19}
	P\frac{\partial F}{\partial x}+Q\frac{\partial F}{\partial y}=KF.
\end{equation*}
Then, $K$ is called the \emph{cofactor} of $F$. Observe that if $X$ has degree $d$ (i.e. $d=\max\{\deg P,\deg Q\}$), then $\deg K=d-1$. Notice that whenever the set $\{F=0\}\subset\mathbb{R}^2$ is not empty then it is invariant by the flow of $X.$ 

The result of this theory that we will use in this work is the next theorem. To see its proof or some improvements and other related results see for instance~\cite[Chapter $8$]{DumLliArt2006}.  

\begin{theorem}[Theorem $8.7$ of \cite{DumLliArt2006}]\label{Darboux}
	Let $X$ be a planar polynomial vector field and let $F_1,\dots,F_n$ be invariant algebraic curves of $X$ with cofactors $K_1,\dots, K_n$. If there are $\lambda_1,\dots,\lambda_n\in\mathbb{R}$ such that
	\begin{equation*}%\label{20}
		\frac{\partial P}{\partial x}+\frac{\partial Q}{\partial y}+\sum_{i=1}^{n}\lambda_i K_i=0,
	\end{equation*}
	then $R=F_1^{\lambda_1}\cdot\dots\cdot F_n^{\lambda_n}$ is an integrating factor of $X$.
\end{theorem}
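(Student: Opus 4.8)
The plan is to verify the defining identity of an integrating factor, namely $\partial_x(PR)+\partial_y(QR)=0$, by a direct computation with logarithmic derivatives. I would work on the open set $U=\mathbb{R}^2\setminus\bigcup_{i=1}^{n}\{F_i=0\}$, where $R=F_1^{\lambda_1}\cdots F_n^{\lambda_n}$ is a well-defined $C^1$ function (indeed analytic on each connected component on which the signs of the $F_i$ are fixed, being a real power of a product of nonvanishing analytic functions). On $U$, the product rule gives
\[
\frac{\partial(PR)}{\partial x}+\frac{\partial(QR)}{\partial y}
= R\left(\frac{\partial P}{\partial x}+\frac{\partial Q}{\partial y}\right)+\left(P\frac{\partial R}{\partial x}+Q\frac{\partial R}{\partial y}\right),
\]
so everything reduces to computing $P\,\partial_x R+Q\,\partial_y R$.

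First I would differentiate the product $R=\prod_i F_i^{\lambda_i}$, obtaining $\partial_x R = R\sum_{i=1}^{n}\lambda_i F_i^{-1}\partial_x F_i$ on $U$ (equivalently, differentiate $\log|R|=\sum_i\lambda_i\log|F_i|$), and likewise for $\partial_y R$. Hence
\[
P\frac{\partial R}{\partial x}+Q\frac{\partial R}{\partial y}
= R\sum_{i=1}^{n}\frac{\lambda_i}{F_i}\left(P\frac{\partial F_i}{\partial x}+Q\frac{\partial F_i}{\partial y}\right).
\]
Next I would use that each $F_i=0$ is an invariant algebraic curve with cofactor $K_i$, that is, $P\,\partial_x F_i+Q\,\partial_y F_i=K_iF_i$; substituting this, the factor $F_i$ cancels and the sum collapses to $R\sum_{i=1}^{n}\lambda_i K_i$. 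Plugging back,
\[
\frac{\partial(PR)}{\partial x}+\frac{\partial(QR)}{\partial y}
= R\left(\frac{\partial P}{\partial x}+\frac{\partial Q}{\partial y}+\sum_{i=1}^{n}\lambda_i K_i\right),
\]
which vanishes identically on $U$ by the hypothesis. Thus $R$ is an integrating factor of $X$ on $U$, and then the fact recalled just before the statement — that an integrating factor produces a first integral (see \cite[Section $8.2$]{DumLliArt2006}) — applies there.

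There is no serious obstacle: this is essentially a one-line computation once the logarithmic-derivative bookkeeping is arranged, and it is precisely the classical Darboux argument. The only point worth a word of care is the domain. When some $\lambda_i$ is not a nonnegative integer, $R$ need not extend to an analytic function on all of $\mathbb{R}^2$, so the identity — and hence the conclusion — is naturally stated on the open dense set $U$ away from the invariant curves; this is exactly the setting in which the theorem is used later, since the centers under study lie in $\Delta$, away from the lines $4x^2-1=0$ and $4y^2-1=0$. If one insists on a polynomial identity valid on all of $\mathbb{R}^2$, one may instead clear the denominators $\prod_i F_i$ and check the resulting equality of polynomials, which holds on the Zariski-dense set $U$ and therefore everywhere.
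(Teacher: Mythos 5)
Your computation is the standard Darboux argument and is correct; the paper does not prove this theorem itself but cites it from \cite[Chapter 8]{DumLliArt2006}, where essentially this same logarithmic-derivative calculation is carried out. Your remark about restricting to the open set where all $F_i\neq 0$ is well taken and is consistent with how the paper uses the result (e.g.\ the integrating factor $1/\bigl((4x^2-1)(4y^2-1)\bigr)$ near the origin).
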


\subsection{A simple family without limit cycles and centers}\label{Sec3.4b}

For completeness we prove next folklore result about non-existence of limit cycles and the center problem  for planar separable vector fields that we will use in several parts of the paper.

\begin{proposition}\label{pr:sep} 
	The $\mathcal{C}^1$ planar differential equation
	\begin{equation}\label{eq:sep}
		\dot x=f(x)g(y),\quad \dot y=h(x)k(y),
	\end{equation} does not have limit cycles. Moreover, all its monodromic singularities are centers.
\end{proposition}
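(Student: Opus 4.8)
The plan is to exploit the separable structure of equation \eqref{eq:sep} to produce an explicit (possibly multivalued) first integral on any region where the right-hand side does not vanish, which simultaneously rules out limit cycles and settles the center problem for monodromic singularities. First I would observe that on the open set where $g(y)\neq 0$ and $h(x)\neq 0$ the orbits of \eqref{eq:sep} coincide with those of the equation obtained after the (orbit-preserving, time-rescaling) division by $g(y)h(x)$, namely $\dot x = f(x)/h(x)$, $\dot y = k(y)/g(y)$. This latter system is itself in separated-variables form, so it admits the first integral
\[
H(x,y)=\int \frac{h(x)}{f(x)}\,dx-\int \frac{g(y)}{k(y)}\,dy
\]
on any simply connected subregion of $\{f\neq 0\}\cap\{k\neq0\}$ — and more robustly one checks directly that the $1$-form $\omega = k(y)h(x)\,dx - f(x)g(y)\,dy$ satisfies $\omega\wedge(\dot x,\dot y)=0$ along \eqref{eq:sep}, so the level curves of any local primitive of $\omega/(f(x)k(y))$, equivalently of $\int(h(x)/f(x))dx-\int(g(y)/k(y))dy$, are invariant. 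The key point is that this integral is non-constant on any open set (its partial derivatives are $h/f$ and $-g/k$, not both identically zero there), so no open region can be filled by a single orbit.

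Next I would handle limit cycles. Suppose $\gamma$ is a periodic orbit; I claim it cannot be a limit cycle. The curves $\{f(x)=0\}$, $\{g(y)=0\}$, $\{h(x)=0\}$, $\{k(y)=0\}$ are unions of vertical and horizontal lines, each of which is invariant (on $\{g(y_0)=0\}$ we have $\dot y=0$, etc.), hence no orbit, and in particular no periodic orbit, can cross them. Therefore $\gamma$ lies in one of the open rectangular ``cells'' cut out by these lines, on which $f,g,h,k$ all have constant sign and, shrinking to the relevant factors, $g$ and $h$ are nonvanishing; restricting $H$ above to that cell gives a smooth first integral defined on a neighborhood of $\gamma$. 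A periodic orbit contained in a level set of a continuous first integral that is non-constant on every neighborhood cannot be isolated among periodic orbits: nearby level sets $\{H=c\}$ near the regular value $H|_\gamma$ are also unions of orbits, and a standard argument (the level sets foliate an annular neighborhood of $\gamma$ by ovals, each a periodic orbit, since $\gamma$ is compact and $X\neq0$ there) shows $\gamma$ is accumulated by periodic orbits on both sides. Hence $\gamma$ is not a limit cycle.

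Finally, for the monodromic singularities: let $p$ be a singularity around which the flow is monodromic (a focus or a center). A singularity of \eqref{eq:sep} satisfies $f(x_0)g(y_0)=0$ and $h(x_0)k(y_0)=0$. If $p$ lies in the interior of a cell (so $g(y_0)\neq 0$, $h(x_0)\neq 0$, forcing $f(x_0)=k(y_0)=0$) then the local first integral $H$ above is analytic near $p$ and non-constant, so by the Poincaré–Lyapunov circle of ideas (the existence of a non-constant continuous first integral near a monodromic singularity forces it to be a center — alternatively, a focus would violate the invariance of the level curves of $H$) $p$ is a center. If instead $p$ lies on one of the lines $\{f=0\}$, $\{g=0\}$, $\{h=0\}$ or $\{k=0\}$, that line is invariant and passes through $p$; a monodromic singularity cannot have an orbit (the invariant line minus $p$) entering or leaving it, which is a contradiction — so this case does not occur, and every monodromic singularity is of the previous type, hence a center. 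The main obstacle, and the only point needing a little care, is the ``non-constant first integral $\Rightarrow$ not a limit cycle / center'' implication together with the bookkeeping of which factors vanish where; the integrability itself is immediate from separability.
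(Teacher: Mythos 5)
Your overall strategy is the right one and is essentially the paper's: cut the plane by invariant lines into invariant cells, exhibit a first integral on each cell, and conclude. But there is a concrete error in the bookkeeping of which lines are invariant, and two of your intermediate claims are false as stated. On a horizontal line $y=y_0$ with $g(y_0)=0$ one has $\dot x=f(x)g(y_0)=0$, \emph{not} $\dot y=0$; the vector field is vertical there and crosses that line wherever $h(x)k(y_0)\neq0$, so $\{g=0\}$ is not invariant, and likewise $\{h=0\}$ is not invariant. Only the vertical lines $x=x^*$ with $f(x^*)=0$ and the horizontal lines $y=y^*$ with $k(y^*)=0$ are invariant. Consequently your claims that a periodic orbit must lie in a cell on which $g$ and $h$ are nonvanishing, and that a monodromic singularity cannot lie on $\{g=0\}\cup\{h=0\}$, both fail: the linear center $\dot x=-y$, $\dot y=x$ (here $f\equiv1$, $g(y)=-y$, $h(x)=x$, $k\equiv1$) has every periodic orbit crossing $\{g=0\}$ and $\{h=0\}$, and its monodromic singularity sits on both. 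Your opening move of dividing by $g(y)h(x)$ inherits the same defect, since that rescaling is singular precisely on curves that orbits do cross.

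The error is repairable, and the repair is exactly the paper's proof. The first integral $H=\int h/f\,dx-\int g/k\,dy$ (equivalently, the integrating factor $1/\bigl(f(x)k(y)\bigr)$, which turns the system into the Hamiltonian one $\dot x=g/k$, $\dot y=h/f$) only requires $f\neq0$ and $k\neq0$; and it is precisely the zero sets of $f$ and $k$ that generate the invariant grid, so every periodic orbit, and every monodromic singularity, is confined to an open rectangle where $H$ is a well-defined $\mathcal{C}^1$ first integral. Your subsequent arguments (level sets foliating an annular neighborhood of $\gamma$ by closed orbits; a non-constant continuous first integral forcing a monodromic point to be a center) then go through unchanged, and differ from the paper only cosmetically: the paper invokes area preservation of the Hamiltonian flow rather than the level-set foliation. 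You should also weaken ``analytic near $p$'' to ``$\mathcal{C}^1$ near $p$'', since the data are only assumed $\mathcal{C}^1$.
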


\begin{proof}  Given any real $\mathcal{C}^1$ function $f$, we denote as $\mathcal{Z}(f)$ the set of all its real zeros. Notice that if $x^*\in\mathcal{Z}(f)$ then $x=x^*$ is an invariant straight line of \eqref{eq:sep}. Similarly happens with the lines $y=y^*$, where $y^*\in\mathcal{Z}(k).$ Then, the greed $$\mathcal{G}=\big(\cup_{x^*\in\mathcal{Z}(f)}\{x=x^*\}\big)\bigcup\left(\cup_{y^*\in\mathcal{Z}(k)}\{y=y^*\}\right)$$ 
	is invariant, and also each of the rectangles $R$ of $\mathbb{R}^2\setminus\mathcal{G},$ given by each of its connected components,  is invariant as well. By the uniqueness of solutions theorem, if~\eqref{eq:sep} has a periodic orbit it must exists an open rectangle $R\subset \mathbb{R}^2\setminus\mathcal{G}$ such that $\gamma\subset R.$ Moreover on $R$ the integrating factor $1/\big({f(x)k(y)}\big)$ is well defined and the resulting system
	\begin{equation*}
		\dot x=\frac{g(y)}{k(y)},\quad \dot y=\frac{h(x)}{f(x)},
	\end{equation*} is $\mathcal{C}^1$  and Hamiltonian. In particular the flow of this new systems preserves the area and this fact prevents the existence of limit cycles. In other words, the periodic orbit $\gamma$ belongs to continua of periodic orbits.
	
	Let us assume that $p$ is a monodromic singularity. As in the above reasoning we know that there exits  $R\subset \mathbb{R}^2\setminus\mathcal{G}$ such that $p$ is in the interior of $R.$ Since on this set the systems has a smooth first integral and the point has a return map it must be a center.
\end{proof}	

Observe that~\eqref{eq:sep} is called  separable vector field because it
also can be written as the separable differential equation
\[
\frac{dy}{dx}=\frac{h(x)}{f(x)}\frac{k(y)}{g(y)}.
\]

\subsection{Resultants}\label{Sec3.5}

Let $f$, $g\colon\mathbb{R}\to\mathbb{R}$ be real polynomials given by,
	\[f(x)=c_0x^l+c_1x^{l-1}+\dots+c_{l-1}x+c_l, \quad g(x)=d_0x^m+d_1x^{m-1}+\dots+d_{m-1}x+d_m,\]
with $c_0d_0\neq0$, $l\geqslant0$ and $d\geqslant0$. The \emph{Sylvester matrix} of $f$ and $g$, in relation to the variable $x$, is the $(l+m)\times(l+m)$ matrix given by,
	\[\operatorname{Syl}(f,g,x)=\underbrace{\left(\begin{array}{cccc}
							c_0 & & & \\
							c_1 & c_0 & & \\
							c_2 & c_1 & \ddots &\\
							\vdots & & \ddots & c_0 \\
							& \vdots & & c_1 \\
							c_l & & & \\
							& c_l & & \vdots \\
							& & \ddots & \\
							& & & c_l 
						\end{array}\right.}_{\text{$m$ columns}}\underbrace{\left.\begin{array}{cccc}
							d_0 & & & \\
							d_1 & d_0 & & \\
							d_2 & d_1 & \ddots & \\
							\vdots & & \ddots & d_0 \\
							& \vdots & & d_1 \\
							d_m & & & \\
							& d_m & & \vdots \\
							& & \ddots & \\
							& & & d_m
						\end{array}\right)}_{\text{$l$ columns}},\]
where the empty spaces are filled by zeros. If $l>0$ and $m>0$, the \emph{resultant} of $f$ and $g$, in relation to the variable $x$, is given by
	\[\operatorname{Res}(f,g,x)=\det\operatorname{Syl}(f,g,x).\]
If $l=0$ or $m=0$, then the resultant is defined as,
\[
\operatorname{Res}(f,g,x)=\begin{cases}
	c_0^m, &\text{ if } l=0,\;m>0;\\
	d_0^l, &\text{ if } l>0,\;m=0;\\
	1, &\text{ if } l=0,\;m=0.
	\end{cases}
	\]

\begin{proposition}[Proposition 3, p. 163 of \cite{CoxLitShe2015}]\label{P1}
	Let $f$, $g\in\mathbb{R}[x]$. Then there is $x_0\in\mathbb{C}$ such that $f(x_0)=g(x_0)=0$ if, and only if, $\operatorname{Res}(f,g,x)=0$. 
\end{proposition}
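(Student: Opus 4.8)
The plan is to interpret $\operatorname{Res}(f,g,x)=0$ as the degeneracy of a natural linear map and then translate that degeneracy into the existence of a common factor over $\mathbb{C}$. First I would dispatch the trivial cases $l=0$ or $m=0$: there one of $f,g$ is a nonzero constant, hence has no complex root, while by definition $\operatorname{Res}(f,g,x)$ equals $c_0^m$, $d_0^l$, or $1$, all nonzero (using $c_0d_0\neq0$); so both sides of the equivalence are false and there is nothing to prove. Assume henceforth $l,m>0$.

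The core step is the observation that $\operatorname{Syl}(f,g,x)$ is exactly the matrix, with respect to the monomial bases $(x^{m-1},0),\dots,(1,0),(0,x^{l-1}),\dots,(0,1)$ on the source and $x^{l+m-1},\dots,1$ on the target, of the $\mathbb{C}$-linear map
\[
\Phi\colon\{A\in\mathbb{C}[x]:\deg A<m\}\times\{B\in\mathbb{C}[x]:\deg B<l\}\longrightarrow\{C\in\mathbb{C}[x]:\deg C<l+m\},\qquad \Phi(A,B)=Af+Bg,
\]
since the $j$-th of the first $m$ columns of $\operatorname{Syl}(f,g,x)$ is the coordinate vector of $x^{m-j}f$ and the $j$-th of the last $l$ columns that of $x^{l-j}g$. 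Source and target both have dimension $l+m$, so $\Phi$ is represented by a square matrix; consequently $\operatorname{Res}(f,g,x)=\det\operatorname{Syl}(f,g,x)=0$ if and only if $\ker\Phi\neq\{0\}$, that is, if and only if there exist $A,B\in\mathbb{C}[x]$, not both zero, with $\deg A<m$, $\deg B<l$ and $Af=-Bg$. This identification is the one point that needs care, but it is pure bookkeeping with monomial bases together with the elementary fact that a square matrix is singular precisely when it has nontrivial kernel (a different ordering of the basis vectors only changes $\det$ by a sign, which is irrelevant here).

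It then remains to show that such a nontrivial pair $(A,B)$ exists if and only if $f$ and $g$ have a common root in $\mathbb{C}$, and this is elementary arithmetic in the UFD $\mathbb{C}[x]$. For the forward implication, if $f(x_0)=g(x_0)=0$, factor $f=(x-x_0)\tilde f$ and $g=(x-x_0)\tilde g$ in $\mathbb{C}[x]$ and take $(A,B)=(\tilde g,-\tilde f)$: then $Af+Bg=0$, $\deg A=m-1<m$, $\deg B=l-1<l$, and $A\neq0$ because $g\neq0$, so $(A,B)$ is a nonzero element of $\ker\Phi$. Conversely, suppose $Af=-Bg$ with $(A,B)\neq(0,0)$ and the stated degree bounds; since $f,g\neq0$ (because $c_0d_0\neq0$), a vanishing $A$ would force $B=0$ and vice versa, so $A,B$ are both nonzero. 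If $\gcd(f,g)=1$ in $\mathbb{C}[x]$, then $g\mid Af$ forces $g\mid A$, impossible for a nonzero $A$ of degree $<\deg g$; hence $\gcd(f,g)$ is a nonconstant polynomial, which over $\mathbb{C}$ has a root $x_0$, necessarily a common root of $f$ and $g$. Chaining the two equivalences yields the proposition. The main obstacle is precisely the bookkeeping identification of $\operatorname{Syl}(f,g,x)$ with the matrix of $\Phi$; everything else is routine.
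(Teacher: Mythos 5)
Your proof is correct. The paper does not prove this proposition at all --- it simply cites it as Proposition 3, p.~163 of the Cox--Little--O'Shea reference --- and your argument (Sylvester matrix as the matrix of $(A,B)\mapsto Af+Bg$, nontrivial kernel equivalent to a nonconstant $\gcd$, hence a common complex root) is essentially the standard proof given in that reference.
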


Let $f$, $g\colon\mathbb{R}\times\mathbb{R}^n\to\mathbb{R}$ be polynomials given by,
\begin{equation}\label{8}
\begin{array}{l}
	f(x,y)=c_0(y)x^l+c_1(y)x^{l-1}+\dots+c_{l-1}(y)x+c_l(y), \vspace{0.2cm} \\
	g(x,y)=d_0(y)x^m+d_1(y)x^{m-1}+\dots+d_{m-1}(y)x+d_m(y),
\end{array}
\end{equation}
where $x\in\mathbb{R}$, $y\in\mathbb{R}^n$, $c_i$, $d_j\colon\mathbb{R}^n\to\mathbb{R}$ are polynomials, $c_0(y)\not\equiv0,$ $d_0(y)\not\equiv0$, $l\geqslant0$ and $d\geqslant0$. Similarly to the case of one variable, the resultant of $f$ and $g$, in relation to the variable $x$, is defined as,
	\[\operatorname{Res}(f,g,x)=\det\operatorname{Syl}(f,g,x).\]
However, instead of a real number, this time $\operatorname{Res}(f,g,x)$ is a polynomial in the variable $y\in\mathbb{R}^n$, i.e. $\operatorname{Res}(f,g,x)\in\mathbb{R}[y]$. To simplify the notation, let $R(y)=\operatorname{Res}(f,g,x)(y)$. Given $y_0\in\mathbb{R}^n$, let $f_{y_0}$, $g_{y_0}\colon\mathbb{R}\to\mathbb{R}$ be given by $f_{y_0}(x)=f(x;y_0)$, $g_{y_0}(x)=g(x;y_0)$ and $R_{y_0}=\operatorname{Res}(f_{y_0},g_{y_0},x)$. Observe that $R_{y_0}\in\mathbb{R}$. In general, $R(y_0)$ and $R_{y_0}$ need not to be equal. However, under some general hypothesis, they do hold a relation.

\begin{proposition}[Proposition 6, p. 167 of \cite{CoxLitShe2015}]\label{P2}
	Let $f$, $g\in\mathbb{R}[x,y]$, $R(y)\in\mathbb{R}[y]$, $f_{y_0}$, $g_{y_0}\in\mathbb{R}[x]$ and $R_{y_0}\in\mathbb{R}$ be as above. If $f_{y_0}$ has degree $l$ and $g_{y_0}$ is nonzero and has degree $p\leqslant m$, then
		\[R(y_0)=c_0(y_0)^{m-p}R_{y_0}.\]
	In particular, $R(y_0)=0$ if, and only if, $R_{y_0}=0$.
\end{proposition}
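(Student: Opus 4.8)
The plan is to work directly with the Sylvester matrix and to exploit the asymmetry in the hypotheses: upon specializing $y=y_0$, the polynomial $f_{y_0}$ keeps its full degree $l$, whereas $g_{y_0}$ may drop to a smaller degree $p\le m$. Write $c_i=c_i(y_0)$ and $d_j=d_j(y_0)$, so that $c_0\neq0$ (because $\deg f_{y_0}=l$) and, since $\deg g_{y_0}=p$, we have $d_0=\dots=d_{m-p-1}=0$ while $d_{m-p}\neq0$. Put $q=m-p\ge0$. I will show that $M:=\operatorname{Syl}(f,g,x)(y_0)$ is block lower triangular, with a $q\times q$ upper-left corner of determinant $c_0^{\,q}$ and complementary $(l+p)\times(l+p)$ block equal to $\operatorname{Syl}(f_{y_0},g_{y_0},x)$. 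Granting this, the identity $R(y_0)=c_0(y_0)^{m-p}R_{y_0}$ follows at once from the formula for the determinant of a block-triangular matrix, and the final ``if and only if'' is immediate since $c_0(y_0)\neq0$.

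First I would inspect the top $q$ rows of $M$. In the $k$-th of the $l$ columns built from $g$, the entry in row $k+j$ is $d_j$, so its first possibly nonzero entry $d_{m-p}$ lies in row $k+q>q$; hence every such column vanishes on rows $1,\dots,q$. In the $j$-th of the $m$ columns built from $f$, the entry $c_0$ sits in row $j$ with only zeros above it, so the columns with $j>q$ also vanish on rows $1,\dots,q$, while the first $q$ of them, restricted to rows $1,\dots,q$, form a lower-triangular matrix with $c_0$ along the diagonal. Therefore
\[
M=\left(\begin{array}{cc} A & 0\\ B & C\end{array}\right),\qquad \det A=c_0^{\,q},\qquad \det M=c_0^{\,q}\det C,
\]
with $A$ of size $q\times q$ and $C$ of size $(l+p)\times(l+p)$.

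The second step would be to identify the block $C$, obtained from $M$ by deleting its first $q$ rows and columns, with $\operatorname{Syl}(f_{y_0},g_{y_0},x)$. Relabelling from $1$, the first $p$ columns of $C$ are the $f$-columns $q+1,\dots,m$ of $M$, which become exactly the $p$ shifted coefficient columns of $f_{y_0}$ (leading coefficient $c_0$ at the top); the last $l$ columns of $C$ are the $g$-columns of $M$ with their vanishing top entries $d_0,\dots,d_{q-1}$ removed, hence the $l$ shifted coefficient columns of $g_{y_0}$ (leading coefficient $d_{m-p}$ at the top, then $d_{m-p+1},\dots,d_m$). Since the layout of the Sylvester matrix fixed in Section~\ref{Sec3.5} places all columns coming from $f$ before all columns coming from $g$, both in $M$ and in $\operatorname{Syl}(f_{y_0},g_{y_0},x)$, no permutation of columns is involved, $C=\operatorname{Syl}(f_{y_0},g_{y_0},x)$, and so $\det C=R_{y_0}$. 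This gives $R(y_0)=c_0(y_0)^{m-p}R_{y_0}$. The degenerate cases $l=0$ or $m=0$ are treated separately and reduce directly to the piecewise definition of the resultant in Section~\ref{Sec3.5}.

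The main difficulty is entirely combinatorial: it lies in the bookkeeping of Step 2, namely tracking how the row and column indices shift when $C$ is matched against $\operatorname{Syl}(f_{y_0},g_{y_0},x)$, and verifying that this matching respects the exact arrangement of the Sylvester matrix, so that no spurious sign from a column transposition appears. A shorter but less self-contained alternative would factor $f_{y_0}(x)=c_0\prod_{i=1}^{l}(x-\alpha_i)$ over $\mathbb{C}$ and combine the product expansions $R(y_0)=c_0^{\,m}\prod_{i=1}^{l}g_{y_0}(\alpha_i)$ and $R_{y_0}=c_0^{\,p}\prod_{i=1}^{l}g_{y_0}(\alpha_i)$; this, however, relies on the product formula for the resultant, which is not among the facts recalled in the excerpt.
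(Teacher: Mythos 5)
Your argument is correct: the specialized Sylvester matrix $M=\operatorname{Syl}(f,g,x)(y_0)$ is indeed block lower triangular exactly as you describe, the $q\times q$ corner contributes $c_0(y_0)^{m-p}$, and the complementary block is $\operatorname{Syl}(f_{y_0},g_{y_0},x)$ with no column permutation, so $R(y_0)=c_0(y_0)^{m-p}R_{y_0}$ and the equivalence $R(y_0)=0\iff R_{y_0}=0$ follows from $c_0(y_0)\neq0$. The paper itself gives no proof of this proposition — it is quoted verbatim from Cox--Little--O'Shea — and your block-triangular specialization is essentially the proof given there (phrased in that reference as repeated cofactor expansion along the top rows, each step extracting a factor $c_0$), so there is nothing to reconcile; your handling of the degenerate cases and of $p=0$, where the surviving block reduces to $d_{m-p}I_l$, is also consistent with the piecewise definition of the resultant recalled in Section~\ref{Sec3.5}.
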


The next proposition, which follows as a corollary of Proposition~\ref{P2}, shows how the resultants can help to solve a nonlinear system of equations.

\begin{proposition}[Corollary 7, p. 168 of \cite{CoxLitShe2015}]\label{P3}
	Let $f$, $g\colon\mathbb{R}\times\mathbb{R}^n\to\mathbb{R}$ be polynomials given by \eqref{8}. Assume that $y_0\in\mathbb{R}^n$ satisfies the following statements.
	\begin{enumerate}[label=(\alph*)]
		\item $f(x;y_0)\in\mathbb{R}[x]$ has degree $l$ or $g(x;y_0)\in\mathbb{R}[x]$ has degree $m$;
		\item $\operatorname{Res}(f,g,x)(y_0)=0$.
	\end{enumerate}
	Then there is $x_0\in\mathbb{C}$ such that $f(x_0,y_0)=g(x_0,y_0)=0$.
\end{proposition}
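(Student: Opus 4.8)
The plan is to derive Proposition~\ref{P3} as a formal consequence of Proposition~\ref{P1} (the one-variable common-root criterion) together with the specialization identity of Proposition~\ref{P2}. First I would use the symmetry of the resultant, $\operatorname{Res}(g,f,x)=(-1)^{lm}\operatorname{Res}(f,g,x)$, to reduce to the case in which it is $f$ whose top degree survives specialization: hypothesis $(a)$ states that either $\deg f_{y_0}=l$ or $\deg g_{y_0}=m$, and interchanging $f$ and $g$ changes the resultant only by a sign, hence preserves both hypothesis $(b)$ and the desired conclusion. So from now on one may assume $\deg f_{y_0}=l$, equivalently $c_0(y_0)\neq0$.

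Next I would dispose of the degenerate case $g_{y_0}\equiv0$. Here $g(x_0,y_0)=0$ automatically for every $x_0$, so it suffices to produce a complex root of $f_{y_0}$, which exists as soon as $l\geqslant1$. And $l\geqslant1$ is forced: if $l=0$ then by the very definition of the resultant $\operatorname{Res}(f,g,x)$ equals $c_0(y)^m$ (or is $\equiv1$ when $m=0$), so $R(y_0)=c_0(y_0)^m\neq0$ (resp.\ $R(y_0)=1$), contradicting $(b)$. In the remaining case $g_{y_0}\not\equiv0$, set $p=\deg g_{y_0}\le m$; the hypotheses of Proposition~\ref{P2} then hold verbatim, giving $R(y_0)=c_0(y_0)^{m-p}R_{y_0}$. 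Since $c_0(y_0)\neq0$, hypothesis $(b)$ forces $R_{y_0}=\operatorname{Res}(f_{y_0},g_{y_0},x)=0$, and Proposition~\ref{P1} then yields $x_0\in\mathbb{C}$ with $f_{y_0}(x_0)=g_{y_0}(x_0)=0$, that is, $f(x_0,y_0)=g(x_0,y_0)=0$.

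The argument is thus entirely bookkeeping rather than computation, and the only place where care is needed --- the main (mild) obstacle --- is precisely in making that bookkeeping airtight: checking that the reduction by symmetry is legitimate when it is the $\deg g_{y_0}=m$ branch of $(a)$ that holds, verifying that the hypotheses of Propositions~\ref{P1} and~\ref{P2} are genuinely satisfied after the substitution $y=y_0$, and confirming that the boundary situations $l=0$, $m=0$, and $g_{y_0}\equiv0$ are either incompatible with $(a)$ and $(b)$ or can be settled directly.
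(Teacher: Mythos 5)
Your argument is correct, and it is essentially the standard derivation of this statement: the paper itself gives no proof, citing Corollary 7 of \cite{CoxLitShe2015}, and that reference proves it exactly as you do --- reduce by the symmetry $\operatorname{Res}(g,f,x)=(-1)^{lm}\operatorname{Res}(f,g,x)$ to the branch $c_0(y_0)\neq0$, dispose of the degenerate cases $g_{y_0}\equiv0$ and $l=0$ via the conventions for resultants of constants, and otherwise combine the specialization identity of Proposition~\ref{P2} with the common-root criterion of Proposition~\ref{P1}. Nothing is missing.
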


We now explain how Propositions~\ref{P1}, \ref{P2} and \ref{P3} can be used to solve a nonlinear system of $n$ equations and $n$ variables. First, for the sake of simplicity, let $n=2$ and consider the nonlinear system,
\begin{equation}\label{9}
	\left\{\begin{array}{l}
		f(x,y)=0, \vspace{0.2cm} \\
		g(x,y)=0,
	\end{array}\right.
\end{equation}
with $(x,y)\in\mathbb{R}^2$. Let $R(y)=\operatorname{Res}(f,g,x)(y)$. Although in general $R$ may have a higher degree than $f$ and $g$, it has only one variable and thus it is possible to study the number and position of all the real solutions of $R(y)=0$.

 Let $y_0\in\mathbb{R}$ be one of such solutions. It follows from Proposition~\ref{P3} that if $f(x;y_0)\in\mathbb{R}[x]$ or $g(x;y_0)\in\mathbb{R}[x]$ maintain its degree, then there is $x_0\in\mathbb{C}$ such that $(x_0,y_0)$ is a solution of \eqref{9}. At this stage, $x_0$ can be obtained by searching all the common real roots of $f(x;y_0)$ and $g(x;y_0)$. In general, if we repeat this process throughout all the real solutions of $R(y)=0$, we may control most, if not all, the real solutions of \eqref{9}. Reciprocally, let $(x_0,y_0)\in\mathbb{R}^2$ be a solution of \eqref{9} and let $f$ and $g$ be given by \eqref{8}. If $c_0(y_0)\neq0$, then it follows from Proposition~\ref{P2} that $R(y_0)=0$ if, and only if, $R_{y_0}=0$. However, since $(x_0,y_0)$ is a solution of \eqref{9}, it follows that $x_0$ is a common root between $f_{y_0}$ and $g_{y_0}$. Therefore, $R_{y_0}=0$ and thus $R(y_0)=0$. That is, if $c_0(y_0)\neq0$, then $y_0$ is a real solution of $R(y)=0$ and thus the solution $(x_0,y_0)$ can be obtained by the method described previously. Let,
\[\begin{array}{l}
	\mathcal{S}=\{(x,y)\in\mathbb{R}^2\colon f(x,y)=g(x,y)=0\}, \vspace{0.2cm} \\
	\mathcal{S}_1=\{(x,y)\in\mathbb{R}^2\colon f(x,y)=g(x,y)=0,\;c_0(y)\neq0\}, \vspace{0.2cm} \\
	\mathcal{S}_2=\{(x,y)\in\mathbb{R}^2\colon f(x,y)=g(x,y)=c_0(y)=0\}.
\end{array}\]
Observe that $\mathcal{S}=\mathcal{S}_1\cup\mathcal{S}_2$, with the union being disjoint. It follows that $\mathcal{S}_1$ is the subset of solutions that can be obtained by the above method. If $c_0(y)\equiv c$, for some $c\in\mathbb{R}\backslash\{0\}$, then $\mathcal{S}_2=\emptyset$ and thus the solutions obtained in $\mathcal{S}_1$ are all the solutions of \eqref{9}. Moreover, even if $c_0(y)$ is not constant, the restriction $c_0(y)=0$ on \eqref{9} may be enough to simplify the system to a point that it can be solved in a simpler way and thus a characterization of $\mathcal{S}_2$ may be achievable. 

The general case of a nonlinear system of $n$ equations and variables follows by using the resultants to reduce it to a system of $n-1$ equations and variables. More precisely, let  $f_j(x_1,\dots,x_n)=0,$ $j=1,2,\ldots n,$
be a system of $n$ equations and variables. By taking the resultants $r_i=\operatorname{Res}(f_1,f_i,x_1)$, $i\in\{2,\dots,n\}$, we reduce the initial system to the new  system $r_i(x_2,\dots,x_n)=0,$ $i=2,\ldots n,$ of $n-1$ equations and variables. Hence, the solutions of this new system can be used to obtain the solutions of initial one. In simple words, theoretically given a system of $n$ system and equations, we take the resultants of those equations and then the resultants of the resultants and so on, until we obtain a system of one equation and one variable, which can be treated relatively easily. We observe that this method may not provide all the solutions of the original system, specially if one does not have enough computational power to calculate all the resultants. For more details about resultants, see \cite[Chapter 3, $\mathsection6$]{CoxLitShe2015} and \cite{GelKapZel1994}.

\section{Finding conditions for a center at the origin}\label{Sec4}

Let $X$ be our family of planar polynomial vector field with a linear center at the origin. As stated in Section~\ref{Sec3.3}, our first approach to prove that some of them have centers is to test their reversibility. If the results are inconclusive, then we can continue by looking by using the Darboux Theory of Integrability to seek for integrating factors by taking as invariant curves the invariant straight lines. We apply this approach to find centers in two subfamilies of $\mathfrak{X}_2^0$ and $\mathfrak{X}_2.$ 

\begin{proposition}\label{P4} Consider the family of vector fields in $\mathfrak{X}_2^0,$ 
	$X=(P,Q)$ where
\begin{equation*}
	\begin{array}{l}
		P(x,y)=(4x^2-1)(y+a_{11}xy+a_{02}y^2), \vspace{0.2cm} \\ Q(x,y)=(4y^2-1)(-x+b_{20}x^2+b_{11}xy),
	\end{array}
\end{equation*}
with $a_{ij}$, $b_{ij}\in\mathbb{R}$ and a linear center at the origin. Then the origin is a center if, and only if, one of the following conditions is satisfied:
\begin{align*}
 &(C_1)\,\,	a_{11}=b_{20}=0; &&  (C_2)\,\, a_{11}=b_{11}=0; && (C_3)\,\, a_{02}=b_{20}=0;\\
  &(C_4)\,\,	a_{02}=b_{11}=0; &&  (C_5)\,\, b_{20}=a_{02},\,b_{11}=a_{11};&&\\ &(C_6)\,\,  b_{20}=-a_{02},\, b_{11}=-a_{11}; &&(C_7)\,\, b_{20}=-a_{11},\, b_{11}=-a_{02}.
	\end{align*}	
\end{proposition}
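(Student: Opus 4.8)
The plan is to split the verification into the two implications. For the ``if'' direction, I would treat each of the seven conditions $(C_1)$--$(C_7)$ separately and exhibit, in each case, either a reversibility symmetry (via one of the involutions $\varphi_1,\dots,\varphi_4$ and the explicit criteria $(a)$--$(d)$ of Section~\ref{Sec3.3}) or a Darboux integrating factor built from the four invariant lines $x=\pm 1/2$, $y=\pm 1/2$ (via Theorem~\ref{Darboux}). Concretely, I expect the ``symmetric'' conditions to be the reversible ones: $(C_5)$ should give reversibility in relation to $y=x$ (checking $P(x,y)=-Q(y,x)$ forces $b_{20}=a_{02}$, $b_{11}=a_{11}$, which is exactly $(C_5)$), and $(C_6)$ or $(C_7)$ should give reversibility in relation to $y=-x$. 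The conditions $(C_1)$--$(C_4)$, in which one coefficient from $f$ and one from $g$ vanish, are the candidates for Darboux integrability: with two of the four nonlinear coefficients gone the divergence $\partial P/\partial x+\partial Q/\partial y$ should be expressible as $\sum\lambda_i K_i$ where $K_i$ are the cofactors of $4x^2-1$ and $4y^2-1$ (each of degree $1$ in the appropriate single variable) plus possibly the lines themselves taken singly; one then reads off the exponents $\lambda_i$ and invokes Theorem~\ref{Darboux} to get an integrating factor defined near the origin, hence a center by the remark following the definition of integrating factor. Some of $(C_1)$--$(C_4)$ may alternatively reduce to separable systems of the form \eqref{eq:sep}, in which case Proposition~\ref{pr:sep} applies directly.

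For the ``only if'' direction, the strategy is the standard one: compute the Lyapunov quantities $L_1, L_2, \dots$ of the linear center at the origin (using the algorithm of Section~\ref{Sec3.1} with $\psi_{2k}(x,y)=x^{2k}$), obtain a finite list $L_1=\dots=L_N=0$ of necessary polynomial conditions in the parameters $a_{11},a_{02},b_{20},b_{11}$, and then show that the affine variety cut out by these polynomials is exactly the union of the seven components $(C_1),\dots,(C_7)$. This is a primary decomposition computation: one computes a Gröbner basis of the ideal $\langle L_1,\dots,L_N\rangle$, decomposes it into its minimal primes (or irreducible components), and checks that each component coincides with one of the listed conditions. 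Since the ``if'' direction already shows each $(C_i)$ forces a center (hence all $L_k$ vanish there), the seven components are contained in the variety; the content of the ``only if'' direction is that there are no further components, i.e. that $L_1=\dots=L_N=0$ with $N$ large enough already implies membership in $\bigcup_i (C_i)$.

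The main obstacle I anticipate is twofold. First, on the computational side, one must be sure that enough Lyapunov quantities have been taken so that the variety of $\langle L_1,\dots,L_N\rangle$ has stabilized — i.e. that $L_{N+1},\dots$ add nothing new; in practice one computes a few extra quantities and checks they lie in the radical of the ideal generated by the earlier ones (e.g. by checking they vanish on each component, or by an ideal-membership test). Second, and more delicate, the primary decomposition may produce components that are not obviously in the stated form, or spurious components lying in the locus where the linear-center normalization degenerates (for instance where the singularity at the origin is not of center-focus type, or where the leading coefficients used implicitly in the normalization vanish); these must be identified and discarded or shown to be empty over $\mathbb{R}$. Handling the real (as opposed to complex) structure — making sure each complex component of the variety actually contributes real center parameters and that no real center condition is missed — is where the argument needs the most care, though for a four-parameter family with the symmetry present here I would expect the decomposition to come out cleanly into the seven affine-linear (or bilinear) pieces listed.
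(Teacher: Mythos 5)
Your overall strategy matches the paper's. For sufficiency the paper does exactly what you propose: a case-by-case verification using reversibility with respect to $x=0$, $y=0$, $y=\pm x$ for $C_1$, $C_4$, $C_5$, $C_6$, a Darboux integrating factor (Theorem~\ref{Darboux}) for $C_2$ and $C_7$, and separability (Proposition~\ref{pr:sep}) for $C_3$. For necessity the paper computes $L_1,\dots,L_4$ (four quantities for four parameters) and then, instead of a Gr\"obner basis and primary decomposition, runs the iterated-resultant elimination of Section~\ref{Sec3.5}: it factors $\operatorname{Res}(L_1,L_i,a_{11})$ for $i=2,3,4$, extracts the common factors $b_{11},\,b_{20},\,a_{02},\,a_{02}\pm b_{20},\,a_{02}+b_{11}$, shows by a second round of resultants that the residual system has no real solutions, and handles the degenerate locus $c_0(y)=a_{02}=0$ of the leading coefficient separately (it coincides with one of the factors, so nothing is lost). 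Your primary-decomposition route would reach the same conclusion and arguably packages the ``no further components'' claim more cleanly, at the cost of relying on a heavier black box; the paper's resultant approach is more hands-on but forces exactly the bookkeeping you flag (stabilization of the variety, which here is settled by the fact that sufficiency closes the loop, and the vanishing-leading-coefficient locus). Both are legitimate; this is a difference of computational tool, not of mathematical substance.

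There is one concrete gap in your sufficiency plan. You assign $C_5$ to reversibility with respect to $y=x$ and expect $C_6$ or $C_7$ to be reversible with respect to $y=-x$; the correct assignment is that $C_5$ is reversible with respect to $y=-x$ (the condition $P(x,y)=Q(-y,-x)$ forces $b_{11}=a_{11}$, $b_{20}=a_{02}$) and $C_6$ with respect to $y=x$, while $C_7$ is \emph{not} reversible with respect to any of the four involutions. Moreover, your fallback for $C_7$ — a Darboux integrating factor built only from the four boundary lines $x=\pm 1/2$, $y=\pm 1/2$ — also fails: the paper's integrating factor for $C_7$ is $R_7(x,y)=1/(a_{11}x+a_{02}y+1)$, whose denominator is a \emph{fifth} invariant line not among the boundary lines. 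So as written your plan would get stuck on $C_7$; the fix is simply to search for additional invariant algebraic curves of low degree before applying Theorem~\ref{Darboux}. The remaining misassignments ($C_1$ and $C_4$ are in fact reversible with respect to $x=0$ and $y=0$ rather than needing Darboux, and $C_3$ is separable) are harmless, since several of these families admit more than one proof of integrability.
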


\begin{proof} 
 We start by proving that these conditions are sufficient for the origin to be a center. The proof follows by a case-by-case study.	First we test the reversibility of each family in relation to the straight lines $x=0$, $y=0$, $y=x$ and $y=-x$ (see Section~\ref{Sec3.3}). With this approach, we conclude that families $C_1$, $C_4$, $C_5$ and $C_6$ are reversible in relation to the lines $x=0$, $y=0$ $y=-x$ and $y=x$, respectively.
	
For those that are not reversible, we use the results of Section~\ref{Sec3.4} and~~\ref{Sec3.4b}. With the Darboux Theory of Integrability (Section~\ref{Sec3.4}) 	we get that  families $C_2$ and $C_7$ have integrating factors $R_2$ and $R_7$ given by,
	\[R_2(x,y)=\frac{1}{(4x^2-1)(4y^2-1)}, \quad R_7(x,y)=\frac{1}{a_{11}x+a_{02}y+1}.\]
Observe that for any choice of the parameters, there is a neighborhood of the origin such that $R_2$ and $R_7$ are well defined and so the origin is a center. Finally, family $C_3$ is under the hypotheses  of Proposition~\ref{pr:sep} and the origin is again a center.
	
We now proof that these conditions are all the possible center conditions. Since we have four free parameters, namely $(a_{11},a_{02},b_{20},b_{11})$, it is natural to calculate the first four Lyapunov quantities of $X$ at the origin. The first two Lyapunov quantities are given by,
\begin{equation}\label{13}
\begin{array}{l}
	\displaystyle L_1=\frac{2}{3}a_{02}a_{11}-\frac{2}{3}b_{11}b_{20}, \vspace{0.2cm} \\
	\displaystyle L_2=-\frac{20}{9}b_{20}^2a_{02}a_{11}+\frac{14}{9}b_{20}^3b_{11}+\frac{2}{3}a_{11}a_{02}^3+\frac{2}{5}b_{11}a_{11}a_{02}^2-\frac{14}{15}a_{02}a_{11}^3-\frac{26}{9}b_{20}a_{02}a_{11}^2+ \vspace{0.2cm} \\
	\displaystyle \qquad\qquad+\frac{2}{15}b_{11}^2b_{20}a_{02}+\frac{16}{15}b_{11}b_{20}a_{11}^2+\frac{106}{45}b_{11}b_{20}^2a_{11}-\frac{2}{15}b_{20}b_{11}^3-\frac{32}{15}a_{02}a_{11}+\frac{32}{15}b_{11}b_{20}.
\end{array}
\end{equation}
The constants $L_3$ and $L_4$ are polynomials of degree $6$ and $8$, with $44$ and $112$ monomials, respectively. Hence, for the sake of their size, we will not write them here. We now use the theory of resultants (see Section~\ref{Sec3.5}) to obtain the solutions of the nonlinear system,
\begin{equation}\label{14}
\left\{\begin{array}{l}
	L_1(a_{11},a_{02},b_{20},b_{11})=0, \vspace{0.2cm} \\
	L_2(a_{11},a_{02},b_{20},b_{11})=0, \vspace{0.2cm} \\
	L_3(a_{11},a_{02},b_{20},b_{11})=0, \vspace{0.2cm} \\
	L_4(a_{11},a_{02},b_{20},b_{11})=0.
\end{array}\right.
\end{equation}
First, we calculate the resultants,
	\[r_2=\operatorname{Res}(L_1,L_2,a_{11}), \quad r_3=\operatorname{Res}(L_1,L_3,a_{11}), \quad r_4=\operatorname{Res}(L_1,L_4,a_{11}).\]
They are polynomials in $(a_{02},b_{20},b_{11})$ and can be factored as,
	\[r_2=\mathcal{R}_1\mathcal{R}_2\mathcal{R}_3\mathcal{R}_4\mathcal{R}_5\mathcal{R}_6\mathcal{R}_{12}, \quad r_3=\mathcal{R}_1\mathcal{R}_2\mathcal{R}_3\mathcal{R}_4\mathcal{R}_5\mathcal{R}_6\mathcal{R}_{13}, \quad r_4=\mathcal{R}_1\mathcal{R}_2\mathcal{R}_3\mathcal{R}_4\mathcal{R}_5\mathcal{R}_6\mathcal{R}_{14},\]
where
	\[\mathcal{R}_1=b_{11}, \quad \mathcal{R}_2=b_{20}, \quad \mathcal{R}_3=a_{02}, \quad \mathcal{R}_4=a_{02}-b_{20}, \quad \mathcal{R}_5=a_{02}+b_{11}, \quad \mathcal{R}_6=a_{02}+b_{20}.\]
and $\mathcal{R}_{12}=\frac{16}{81}a_{02}-\frac{16}{405}b_{11}$. The polynomials $\mathcal{R}_{13}$ and $\mathcal{R}_{14}$ are of degree $5$ and $9$ and have $9$ and $28$ monomials, respectively. Following the notation of Section~\ref{Sec3.5}, we write $x=a_{11}\in\mathbb{R}$, $y=(a_{02},b_{20},b_{11})\in\mathbb{R}^3$ and thus it follows from \eqref{13} that,
	\[L_1(x,y)=c_0(y)x+c_1(y),\]
where $c_0(y)=\frac{2}{3}a_{02}$ and $c_1(y)=-\frac{2}{3}b_{11}b_{20}$. Hence, $c_0(y)=0$ if, and only if, $a_{02}=0$. Therefore, it follows from Section~\ref{Sec3.5} that if $(a_{11}^*,a_{02}^*,b_{20}^*,b_{11}^*)$ is a solution of \eqref{14}, with $a_{02}^*\neq0$, then $(a_{02}^*,b_{20}^*,b_{11}^*)$ is either a solution of $\mathcal{R}_i=0$, for some $i\in\{1,\dots,6\}$, or it is a solution of
\begin{equation}\label{15}
	\left\{\begin{array}{l}
		\mathcal{R}_{12}(a_{02},b_{20},b_{11})=0, \vspace{0.2cm} \\
		\mathcal{R}_{13}(a_{02},b_{20},b_{11})=0, \vspace{0.2cm} \\
		\mathcal{R}_{14}(a_{02},b_{20},b_{11})=0.
	\end{array}\right.
\end{equation}
We now go to the second round of resultants. That is, let
	\[R_3=\operatorname{Res}(\mathcal{R}_{12},\mathcal{R}_{13},a_{02}), \quad R_4=\operatorname{Res}(\mathcal{R}_{12},\mathcal{R}_{14},a_{02}).\]
They are polynomials in $(b_{20},b_{11})$ and can be factored as,
	\[R_3=\mathcal{R}_1^3\mathcal{R}_{23}, \quad R_4=\mathcal{R}_1^5\mathcal{R}_{24},\]
with $\mathcal{R}_{23}$ and $\mathcal{R}_{24}$ polynomials in $(b_{20},b_{11})$. Since
\[	\mathcal{R}_{12}=\frac{16}{81}a_{02}-\frac{16}{405}b_{11} \]
 has constant coefficient in $a_{02}$, it follows from Section~\ref{Sec3.5} that if $(a_{02}^*,b_{20}^*,b_{11}^*)$ is a solution of \eqref{15}, then $(b_{20}^*,b_{11}^*)$ is either a solution of $\mathcal{R}_1=0$, or it is a solution of
\begin{equation}\label{16}
	\left\{\begin{array}{l}
		\mathcal{R}_{23}(b_{20},b_{11})=0, \vspace{0.2cm} \\
		\mathcal{R}_{24}(b_{20},b_{11})=0. 
	\end{array}\right.
\end{equation}
Since 
	$\mathcal{R}_{23}=\alpha_1b_{11}^2+\alpha_2b_{20}^2+\alpha_3,$
with $\alpha_i\in\mathbb{R}\backslash\{0\}$, has constant coefficient in $b_{20}$ and $\operatorname{Res}(\mathcal{R}_{23},\mathcal{R}_{24},b_{20})= c\in\mathbb{R}\backslash\{0\}$, it follows that \eqref{16} has no solution. Therefore, we conclude that if we let
\[\begin{array}{l}
	\displaystyle \mathcal{S}=\{(a_{11},a_{02},b_{20},b_{11})\in\mathbb{R}^4\colon L_1=L_2=L_3=L_4=0\}, \vspace{0.2cm} \\
	\displaystyle \mathcal{S}_k=\{(a_{11},a_{02},b_{20},b_{11})\in\mathbb{R}^4\colon L_1=L_2=L_3=L_4=\mathcal{R}_k=0\}, \vspace{0.2cm} \\
	\mathcal{S}^*=\{(a_{11},a_{02},b_{20},b_{11})\in\mathbb{R}^4\colon L_1=L_2=L_3=L_4=a_{02}=0\},
\end{array}\]
with $k\in\{1,\dots,6\}$, then $\mathcal{S}=\mathcal{S}^*\cup\mathcal{S}_1\cup\dots\cup\mathcal{S}_6$.
Since $\mathcal{R}_3=a_{02}$, it follows that $\mathcal{S}^*=\mathcal{S}_3$ and thus we conclude that $\mathcal{S}=\mathcal{S}_1\cup\dots\cup\mathcal{S}_6$. Since the polynomials $\mathcal{R}_i$ are quite simple, it is now easy to obtain conditions $C_1,\dots,C_7$ mentioned on the enunciate of the proposition. For example, if $\mathcal{R}_1=b_{11}=0$, then it follows from \eqref{13} that $L_1=0$ if, and only if, $a_{11}=0$ or $a_{02}=0$. Hence, we obtain two possible set of conditions, given by $\{b_{11}=a_{11}=0\}$ and $\{b_{11}=a_{02}=0\}$. Simple calculations show that both these families are solutions of \eqref{14} and thus they are candidates for center conditions (and indeed, they are precisely equal to $C_2$ and $C_4$, respectively). This finishes the proof. \end{proof}

\begin{proposition}\label{P5} Consider the family of vector fields in $\mathfrak{X}_2,$ 
$X=(P,Q)$ where
\begin{equation*}%\label{17}
	\begin{array}{l}
		P(x,y)=(4x^2-1)(y+a_{20}x^2+a_{11}xy+a_{02}y^2), \vspace{0.2cm} \\ Q(x,y)=(4y^2-1)(-x+b_{20}x^2+b_{11}xy+b_{02}y^2),
	\end{array}
\end{equation*}
with $a_{ij}$, $b_{ij}\in\mathbb{R}$ and a linear center at the origin. If one of the conditions $(D_j)$ holds, where
\begin{align*}
&(D_1)\,\,   a_{20}=a_{02}=b_{20}=b_{02}=0; &&  (D_2)\,\,    a_{20}=a_{11}=b_{11}=b_{02}=0;\\
&(D_3)\,\,    a_{20}=a_{02}=b_{11}=0; &&  (D_4)\,\,    a_{11}=b_{20}=b_{02}=0;\\
&(D_5)\,\,    b_{20}=a_{02},\, b_{11}=a_{11},\, b_{02}=a_{20}; &&  (D_6)\,\,    b_{20}=-a_{02},\, b_{11}=-a_{11},\, b_{02}=-a_{20};\\
&(D_7)\,\,    a_{20}=a_{02}=-b_{11}/2,\, b_{20}=b_{02}=-a_{11}/2; &&  (D_8)\,\,    a_{11}=2a_{20}=-2b_{02}=-b_{11},\, b_{20}=-a_{02};\\
&(D_9)\,\,    a_{11}=-2a_{20}=-2b_{02}=b_{11},\, b_{20}=a_{02}; &&  	
\end{align*}
then the origin is a center.
\end{proposition}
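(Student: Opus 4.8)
The plan is to prove Proposition~\ref{P5} in the same spirit as Proposition~\ref{P4}: for each of the nine conditions $(D_j)$ I would exhibit a mechanism — reversibility, a Darboux integrating factor, or the separability criterion of Proposition~\ref{pr:sep} — that forces a first integral near the origin, so that the linear center is in fact a genuine center. The first task is to go through the four involutions $\varphi_1,\dots,\varphi_4$ of Section~\ref{Sec3.3} and match them against the $(D_j)$. Writing $X=(P,Q)$ with $P=(4x^2-1)f$ and $Q=(4y^2-1)g$, the factors $4x^2-1$ and $4y^2-1$ are even in $x$ and in $y$ respectively and swap under $(x,y)\mapsto(y,x)$, so checking the criteria $(a)$–$(d)$ of Section~\ref{Sec3.3} reduces to checking parity/symmetry of $f$ and $g$. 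I expect $(D_1)$ (where $f=y+a_{11}xy$, $g=-x+b_{20}x^2$ — actually one has to recheck, but the vanishing of all the pure-square coefficients makes $P$ odd in $y$ and $Q$ even in $y$ up to the invariant factor) to be reversible with respect to one axis, $(D_2)$ with respect to the other, and the symmetric pair $(D_5)$, $(D_6)$ to be reversible with respect to $y=x$ and $y=-x$ exactly as conditions $C_5$, $C_6$ were; $(D_7)$ is a strong candidate for a diagonal reversibility as well. For each such $j$ I would simply verify the relevant identity among $\{P(\pm x,\pm y),Q(\pm x,\pm y)\}$ and invoke the Poincaré reversibility criterion.

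For the conditions that are not reversible I would turn to the Darboux Theory of Integrability (Theorem~\ref{Darboux}), taking as invariant algebraic curves the four invariant lines $x=\pm1/2$, $y=\pm1/2$ (equivalently the two conics $4x^2-1=0$, $4y^2-1=0$) together with any extra invariant line of the form $1+\alpha x+\beta y=0$ that the particular subfamily happens to carry — this is exactly how $C_7$ produced the factor $R_7$. For each remaining $(D_j)$ I would compute the cofactors $K_i$ of these curves (each of degree $3$, since $X$ has degree $4$), then solve the linear system $\operatorname{div}X+\sum\lambda_iK_i=0$ for the exponents $\lambda_i$; a solution yields the integrating factor $R=\prod F_i^{\lambda_i}$, which is well defined and nonvanishing in a neighbourhood of the origin (the lines $x=\pm1/2$, $y=\pm1/2$ do not pass through $0$, and an extra line $1+\alpha x+\beta y=0$ also misses the origin), hence the origin is a center. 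I anticipate that $(D_8)$ and $(D_9)$, and possibly one or two of $(D_3)$, $(D_4)$, will fall into this Darboux framework, with $(D_3)$ or $(D_4)$ perhaps instead being of separable type and handled directly by Proposition~\ref{pr:sep} — this happens precisely when after imposing the condition one of $f$, $g$ factors so that $P$ becomes $x$-separated from the $y$-dependence in the required way, e.g. if $P=(4x^2-1)\cdot y\cdot(\text{something in }x)$ and $Q=(4y^2-1)\cdot(\text{something in }x)\cdot(\text{function of }y)$.

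The organization of the write-up would therefore be a case-by-case list: state for each $(D_j)$ which of the three tools applies and display the witnessing object (the involution $\varphi_k$, the exponents $(\lambda_1,\dots,\lambda_n)$ and resulting $R$, or the separating factorization), leaving the routine verification of the defining identity to the reader or to a one-line check. The main obstacle I expect is not conceptual but bookkeeping: identifying, for the non-symmetric conditions, the \emph{extra} invariant curve needed to make the Darboux linear system solvable — there is no a priori reason a generic member of subfamily $(D_j)$ has an invariant line beyond the four given ones, so for each such $j$ one must search (e.g. by undetermined coefficients, positing $1+\alpha x+\beta y+\gamma xy+\cdots=0$ with a degree-$3$ cofactor) for whatever additional invariant algebraic curve the condition secretly guarantees, and only then can the integrating factor be assembled. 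A secondary subtlety is making sure in every case that the constructed first integral or integrating factor is analytic and nonzero on an actual neighbourhood of the origin, so that Theorem~\ref{PL} (via the integrating-factor remark of Section~\ref{Sec3.4}) genuinely applies; this is automatic here because all the invariant curves in play avoid the origin, but it should be stated. Unlike Proposition~\ref{P4}, no converse (that these are \emph{all} the center conditions) is claimed, so no Lyapunov-constant/resultant computation is needed — the proof is purely a verification that each listed condition is sufficient.
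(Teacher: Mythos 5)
Your strategy is exactly the paper's: a case-by-case verification that each $(D_j)$ admits one of reversibility, a Darboux integrating factor, or separability, any of which yields a local first integral and hence a center. Several of your provisional assignments of mechanism to case are wrong, though the systematic check you describe would catch this: in fact $D_4$, $D_3$, $D_6$, $D_8$, $D_5$, $D_9$ are all reversible (with respect to $x=0$, $y=0$, $y=x$, $y=x$, $y=-x$, $y=-x$, respectively), $D_1$ is separable (it coincides with family $C_3$ of Proposition~\ref{P4}), and only $D_2$ and $D_7$ require Darboux, with integrating factors $1/\bigl((4x^2-1)(4y^2-1)\bigr)$ and $1/\bigl((4x^2-1)^2(4y^2-1)^2\bigr)$. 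In particular, the ``main obstacle'' you anticipate --- hunting for an extra invariant curve beyond the four given lines --- never arises, since the two Darboux cases are resolved using only powers of $4x^2-1$ and $4y^2-1$.
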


\begin{proof} Similarly that in  Proposition~\ref{P4}, here the proof follows by a case-by-case study. First we observe that $D_4$ is reversible in relation to $x=0$, $D_3$ is reversible in relation to $y=0$, $D_6$ and $D_8$ are reversible in relation to $y=x$ and $D_5$ and $D_9$ are reversible in relation to $y=-x$. On the other hand, $D_2$ and $D_7$ have integrating factors $R_2$ and $R_7$ given by,
	\[R_2(x,y)=\frac{1}{(4x^2-1)(4y^2-1)}, \quad R_7(x,y)=\frac{1}{(4x^2-1)^2(4y^2-1)^2}.\]
Finally, we observe that family $D_1$ is precisely equal to family $C_3$ in Proposition~\ref{P4} and thus is also a center. This finishes the proof. \end{proof}

We now make some comments about the process to find the center condition $D_1,\dots,D_9$ and explain why we cannot ensure that these are all the possible conditions, as we did in Proposition~\ref{P4}. First, since we have six free parameters, we must calculate the first six Lyapunov quantities. For the sake of their sizes, we will not write then here. The constant $L_6$ for example, has $5425$ monomials. Let $r_i=\operatorname{Res}(L_1,L_i,\lambda)$, where $\lambda\in\{a_{20},a_{11},a_{02},b_{20},b_{11},b_{02}\}$ and $i\in\{2,\dots,6\}$. If $\lambda\in\{a_{20},b_{02}\}$, then $r_1,\dots,r_6$ show no common factor. However, if $\lambda\in\{a_{11},a_{02},b_{20},b_{02}\}$, then $r_1,\dots,r_6$ have a common factor. More precisely, let
	\[\mathcal{R}_1=a_{20}+a_{02}, \quad \mathcal{R}_2=a_{11}+2b_{02}, \quad \mathcal{R}_3=2a_{20}+b_{11}, \quad \mathcal{R}_4=b_{20}+b_{02}.\]
If $\lambda=a_{11}$, (resp. $\lambda=a_{02}$, $b_{20}$, $b_{02}$), then $\mathcal{R}_1$ (resp. $\mathcal{R}_2$, $\mathcal{R}_2$, $\mathcal{R}_4$) is a common factor of $r_1,\dots,r_6$. Therefore, we are motivated to seek for center conditions among the sets,
	\[\mathcal{S}_k=\{(a_{20},\dots,b_{02})\in\mathbb{R}^6\colon L_1=\dots=L_6=\mathcal{R}_k=0,\}\]
for $k\in\{1,\dots,4\}$. As in the proof of Proposition~\ref{P4}, since the polynomials $\mathcal{R}_k$ are quite simple, it is now easy to deduce conditions $D_1,\dots,D_9$. Let $r_i=\operatorname{Res}(L_1,L_i,a_{11})$, $i\in\{2,\dots,6\}$. As already mentioned, we have $r_i=\mathcal{R}_1\mathcal{R}_{1,i}$, $i\in\{2,\dots,6\}$. If we let $R_j=\operatorname{Res}(\mathcal{R}_{11},\mathcal{R}_{12},a_{20})$, $j\in\{3,\dots,6\}$, then we obtain that
	\[R_3=\mathcal{R}_4^9\mathcal{R}_7^3\mathcal{R}_8\mathcal{R}_9^8\mathcal{R}_{10}\mathcal{R}_{11}\mathcal{R}_{23}, \quad R_4=\mathcal{R}_4^{13}\mathcal{R}_7^3\mathcal{R}_8\mathcal{R}_9^{12}\mathcal{R}_{10}\mathcal{R}_{11}\mathcal{R}_{24},\]
where,
	\[\mathcal{R}_7=b_{02}, \quad \mathcal{R}_8=a_{02}-b_{20}, \quad \mathcal{R}_9=2a_{02}-b_{11}, \quad \mathcal{R}_{10}=a_{02}+b_{20}, \quad \mathcal{R}_{11}=a_{02}-b_{11}.\]
Moreover, $\mathcal{R}_{23}$ and $\mathcal{R}_{24}$ have degrees $28$ and $44$ and have $5774$ and $34122$ monomials, respectively. At this stage we lack computational power to keep the computations and to seek for a factorization of $R_5$ and $R_6$. If we calculate all the possible combinations of resultants (i.e. if we take the resultants in relation to $\lambda$ and then in relation to $\mu$, for any $\lambda\in\{a_{20},\dots,b_{02}\}$ and then for any $\mu\in\{a_{20},\dots,b_{02}\}\backslash\{\lambda\}$), as far as our computational power allows, we obtain twenty five other polynomials $\mathcal{R}_i$, in addition to the above eleven. With these polynomials in hand, we studied the sets
	\[\mathcal{S}_k=\{(a_{20},\dots,b_{02})\in\mathbb{R}^6\colon L_1=\dots=L_6=\mathcal{R}_k=0,\}\]
for $k\in\{5,\dots,36\}$, seeking for new center conditions. However, no new center condition were founded. Since we did not calculated every necessary resultant, we cannot ensure that families $D_1,\dots,D_9$ are all the possible solutions of $L_1=\dots=L_6=0$. Finally, we observe that the use of the command \emph{solve} (and other similar commands) in well-known symbolic and numeric softwares (such as \emph{Mathematica} and \emph{Maple}) results in the return of no solutions at all or in only six of the nine solutions that we have founded. 

\section{Bifurcation of limit cycles from the origin}\label{Sec5}

In this section we apply Theorem~\ref{GGT} on the center families $C_1,\dots,C_7$ and $D_1,\dots, D_9$ of Propositions~\ref{P4} and \ref{P5} to obtain a lower bound for the number of limit cycles in $\mathfrak{X}_2^0$ and $\mathfrak{X}_2$. For the sake of simplicity, we only show the demonstration of the families that provided the most number of limit cycles.

\begin{proposition}\label{P7}
	Let $X=(P,Q)\in\mathfrak{X}_2^0$ be the vector field
		\[P(x,y)=(4x^2-1)(y+\mu xy), \quad Q(x,y)=(4y^2-1)(-x+x^2),\]
	with $\mu\in\mathbb{R},$ which satisfies condition~$C_4$ in Proposition~\ref{P4} and has a center at the origin. Then, for $\mu=5$, there are perturbations of $X$ in $\mathfrak{X}_2^0$ bifurcating at least three limit cycles from the origin.
\end{proposition}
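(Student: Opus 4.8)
The plan is to obtain the three limit cycles by a Hopf-type bifurcation from the center at the origin, via Theorem~\ref{GGT}. The only care needed in the set-up is that the perturbed family stay inside $\mathfrak{X}_2^0$, and in particular keep the square $\Delta$ invariant; this is achieved by perturbing only the polynomials $f$ and $g$, leaving the factors $4x^2-1$ and $4y^2-1$ untouched — precisely the situation covered by the version of Theorem~\ref{GGT} described in the Remark following it, where the trace parameter $\alpha$ is allowed to sit inside $f$ and $g$. Concretely, I would work with
\[
P(x,y)=(4x^2-1)\bigl(y-\alpha x+\mu xy+F_0(x,y;\lambda)\bigr),\qquad
Q(x,y)=(4y^2-1)\bigl(-x-\alpha y+x^2+G_0(x,y;\lambda)\bigr),
\]
whose linear part at the origin has eigenvalues $\alpha\pm i$, so that $\alpha$ is the Hopf parameter; here $F_0$ and $G_0$ collect the admissible perturbations of the remaining coefficients of $f,g$ inside $\mathfrak{X}_2^0$, namely the deviations of $a_{11},a_{02},b_{20},b_{11}$ from their values at the center (keeping $a_{20}=b_{02}=0$ and the normalization $a_{01}=1$, $b_{10}=-1$). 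For $\alpha=\lambda=0$ this is exactly $X$, which is a center by Proposition~\ref{P4} (its condition $C_4$, since here $a_{02}=b_{11}=0$).

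The next step is to put $\alpha=0$ and compute, with the algorithm of Section~\ref{Sec3.1} and the choice $\psi_{2k}(x,y)=x^{2k}$, the Lyapunov quantities $L_1,L_2,L_3$ of the perturbed field as polynomials in $\lambda$ and $\mu$, and then to find a linear change of parameters $\lambda=M(\mu)\widetilde\lambda$ bringing $(L_1,L_2,L_3)$ into the normal form~\eqref{28}. I expect this to yield $k+\ell=3$, most plausibly with $k=2$, $\ell=1$: then $L_1=\widetilde\lambda_1+O_2(\widetilde\lambda)$, $L_2=g_{2,1}(\mu)\widetilde\lambda_1+f_0(\mu)\widetilde\lambda_2+O_2(\widetilde\lambda)$ and $L_3=g_{3,1}(\mu)\widetilde\lambda_1+f_1(\mu)\widetilde\lambda_2+O_2(\widetilde\lambda)$, and the distinguished value $\mu=5$ would be a simple root of $f_0$ — so that the generic cyclicity of this family is $2$ and jumps to $3$ at $\mu=5$. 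In any case, once the normal form is known, checking the hypotheses of Theorem~\ref{GGT} at $\mu_0=5$ reduces to verifying a few non-vanishing statements for one-variable polynomials evaluated at $\mu=5$ (for instance $f_0(5)=0$, $f_0'(5)\neq0$, $f_1(5)\neq0$ in the $(2,1)$ case, or the linear independence of the three linear parts in the $(3,0)$ case), and then Theorem~\ref{GGT} produces three limit cycles bifurcating from the origin.

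I expect the symbolic computation to be the main obstacle: obtaining $L_2$ and, above all, $L_3$ explicitly in the four perturbation parameters and in $\mu$, and then isolating the factor whose vanishing at $\mu=5$ lowers the relevant rank, is a sizeable computer-algebra task. A secondary point is that, because of the restriction to $\mathfrak{X}_2^0$, one must check that the linear parts in $\lambda$ of $L_1,L_2,L_3$ span a space of the dimension required by~\eqref{28}; if they do not, one enlarges $F_0,G_0$ using the free coefficients still available in $\mathfrak{X}_2^0$ before carrying out the reduction. The remaining steps — the eigenvalue computation, the sign bookkeeping of the successive perturbations, and the final invocation of Theorem~\ref{GGT} — are routine.
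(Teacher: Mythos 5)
Your proposal follows essentially the same route as the paper: perturb only $f$ and $g$ so that $\Delta$ stays invariant, compute the Lyapunov quantities at $\alpha=0$, pass to the normal form \eqref{28} by a linear change of parameters, and invoke Theorem~\ref{GGT}; the paper does exactly this with the two-parameter perturbation $(4x^2-1)ay^2$, $(4y^2-1)bxy$ (the directions transverse to the center component $C_4$, which is why your four-parameter version collapses to the same rank) and obtains $k=2$, $\ell=1$ with $f_0(\mu)=\tfrac{2}{15}(\mu+1)(\mu-5)$ and $f_1(5)\neq0$, precisely the $(2,1)$ scenario and the simple zero at $\mu=5$ that you anticipated. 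The only thing your write-up defers is the symbolic computation of $L_1,L_2,L_3$, which is the substance of the verification but is carried out in the paper exactly as you describe.
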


\begin{proof}  Let $Y=(R,S)$ be the perturbation of $X$ given by,
	\[R(x,y)=P(x,y)+(4x^2-1)(-\alpha y+ a y^2), \quad S(x,y)=Q(x,y)+(4y^2-1)(\alpha x+ b xy),\]
with $\alpha,  a ,  b \in\mathbb{R}$. When $\alpha=0,$ by computing the linear terms of the first four Lyapunov quantities in relation to $\Lambda=\{ a , b \}$, we observe that the rank of the jacobian matrix in relation to $\Lambda$ is two (i.e. under the notation of Theorem~\ref{GGT}, $k=2$). Due to size, we will only show the first two Lyapunov quantities:
\[\begin{array}{l}
	\displaystyle L_1=\frac{2}{3}\mu  a -\frac{2}{3} b +O_2( a , b ), \vspace{0.2cm} \\
	\displaystyle L_2=-\left(\frac{14}{15}\mu^3+\frac{26}{9}\mu^2+\frac{196}{45}\mu\right) a +\left(\frac{166}{45}+\frac{16}{15}\mu^2+\frac{106}{45}\mu\right) b +O_2( a , b ).
\end{array}\] 
Taking the first Lyapunov quantity in relation to $\Lambda$ we have that the rank of the jacobian matrix is one. Hence, when $\mu \ne0,$ after the change of parameters 
\[
\left(\begin{matrix} a \\ b \end{matrix}\right)= M \left(\begin{matrix}\lambda_1\\ \lambda_2 \end{matrix}\right),\quad \mbox{with}\quad M=\left(\begin{matrix}\frac{3}{2\mu}& \frac1{\mu}\\0&1\end{matrix}\right),
\]
 we can write,
\[\begin{array}{l}
	\displaystyle L_1=\lambda_1+O_2(\lambda), \vspace{0.2cm} \\
	\displaystyle L_2=g_{21}(\mu)\lambda_1+f_0(\mu) \lambda_2 +O_2(\lambda), \vspace{0.2cm} \\
	L_3=g_{31}(\mu)\lambda_1+f_1(\mu) \lambda_2 +O_2(\lambda),
\end{array}\] 
where $g_{21}$ and $g_{31}$ are polynomials of degree $2$ and $4$, respectively, $\lambda=(\lambda_1,\lambda_2)$ and
	\[f_0(\mu)=\frac{2}{15}(\mu+1)(\mu-5), \quad f_1(\mu)=-\frac{2}{315}(\mu+1)(47\mu^3-96\mu^2-582\mu-673).\]
Since $\mu=5$ is a simple zero of $f_0$ and $f_1(5)\neq0$, it follows from Theorem~\ref{GGT} with $\ell=1$ that we can bifurcate at least three limit cycles from the origin. \end{proof}

\begin{remark}
	We remark that $C_4$ is not the only family from which we can bifurcate three limit cycles. Families $C_1$, $C_5$ and $C_6$ also achieved this number. Moreover, under the notations of Theorem~\ref{GGT}, for each of these families we have $k=2$ and $\ell=1$.
\end{remark}

\begin{proposition}\label{P8}
	
	Let $X=(P,Q)\in\mathfrak{X}_2$ be the vector field
		\[\begin{array}{l}
		\displaystyle P(x,y)=(4x^2-1)\left(y-x^2+ \mu xy-y^2\right), \vspace{0.2cm} \\ \displaystyle Q(x,y)=(4y^2-1)\left(-x-\mu x^2/2+2xy-\mu y^2/2\right),
	\end{array}\]
	with $ \mu \in\mathbb{R},$ which satisfies condition~$D_7$ in Proposition~\ref{P5} and has a center at the origin. 
	 Set $ \mu =\mu^\pm$, where $\mu^\pm$ is one of the two real zeros of
	$h( \mu )=175 \mu ^4+128520 \mu ^2-44944.$
	Then, there are perturbations of $X$ in $\mathfrak{X}_2$ bifurcating at least five limit cycles from the origin.
\end{proposition}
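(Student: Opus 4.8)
The plan is to apply Theorem~\ref{GGT} to a suitable polynomial perturbation of $X$ inside $\mathfrak{X}_2$ that keeps the four lines $x=\pm1/2$, $y=\pm1/2$ invariant. Concretely, I would write $Y=(R,S)$ with
\[
R(x,y)=P(x,y)+(4x^2-1)\bigl(-\alpha y+a_{20}x^2+a_{11}xy+a_{02}y^2\bigr), \quad
S(x,y)=Q(x,y)+(4y^2-1)\bigl(\alpha x+b_{20}x^2+b_{11}xy+b_{02}y^2\bigr),
\]
so that $\Delta$ remains invariant and, for $\alpha=0$, the full six-dimensional parameter $\Lambda=(a_{20},a_{11},a_{02},b_{20},b_{11},b_{02})$ perturbs the $D_7$ center at the origin. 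Following the scheme of Section~\ref{Sec3.2}, the first step is to compute the Taylor expansions of order one in $\Lambda$ of the Lyapunov quantities $L_1,\dots,L_5$ at $\alpha=0$, and to inspect the rank of the Jacobian of $(L_1,\dots,L_j)$ with respect to $\Lambda$. I expect to find that the first few linear parts are independent, giving a value $k$ (presumably $k=4$ here, since we want $k+\ell=5$ with $\ell=1$), so that after a linear change of parameters $\Lambda=M(\mu)\lambda$ with $\lambda\in\mathbb{R}^k$ the quantities take the normal form~\eqref{28}: $L_j=\lambda_j+O_2(\lambda)$ for $j<k$, and $L_j=\sum_{l<k}g_{j,l}(\mu)\lambda_l+f_{j-k}(\mu)\lambda_k+O_2(\lambda)$ for $j\ge k$.

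Once the normal form is in place, the remaining task is to exhibit a value $\mu=\mu^\pm$ for which the obstruction functions $f_0,\dots,f_{\ell-1}$ vanish with the correct transversality while $f_\ell\neq0$. With $\ell=1$ this reduces to: $f_0(\mu)$ has a simple zero at $\mu^\pm$ and $f_1(\mu^\pm)\neq0$. The statement already tells us that $f_0$ should (up to a nonzero factor) be proportional to $h(\mu)=175\mu^4+128520\mu^2-44944$; I would verify that $h$ has exactly two real roots (its discriminant in $\mu^2$ is $128520^2+4\cdot175\cdot44944>0$ and the product of the roots in $\mu^2$ is $-44944/175<0$, so there is exactly one positive value of $\mu^2$, hence two real values $\mu^\pm=\pm\sqrt{\,\cdot\,}$), that these are simple roots of $h$ (so $h'(\mu^\pm)\neq0$), and that $f_1(\mu^\pm)\neq0$ — the latter checked by computing $\operatorname{Res}(h,\,\text{numerator of }f_1,\mu)\neq0$ or by direct substitution. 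Then Theorem~\ref{GGT} with $k=4$, $\ell=1$ yields $k+\ell=5$ small-amplitude limit cycles bifurcating from the origin, all contained in $\Delta$ since $\Delta$ is invariant and they are close to the origin.

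The main obstacle is computational rather than conceptual: computing $L_4$ and $L_5$ for a quartic family in six perturbation parameters and a second parameter $\mu$ is heavy, and one must be careful that the choice of $\psi_{2k}=x^{2k}$ and the recursive ambiguity in the $c_{ij}$ do not spoil the structure — but since each $L_j$ is well defined modulo $\langle L_1,\dots,L_{j-1}\rangle$, working on the variety $L_1=\dots=L_{j-1}=0$ (equivalently, reading off only the linear-in-$\lambda$ part after the change $M(\mu)$) makes the relevant coefficients unambiguous. A secondary point to check is that the matrix $M(\mu)$ has rank $k$ at $\mu=\mu^\pm$ (equivalently, that the relevant $k\times k$ minor of the linearized Lyapunov map is nonzero there), which is exactly the rank hypothesis of Theorem~\ref{GGT}; this should hold away from a finite set of bad $\mu$, and one verifies $\mu^\pm$ is not among them. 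No genuinely new idea beyond the machinery of Sections~\ref{Sec3.1}--\ref{Sec3.2} is needed; the content is in the explicit computation producing $f_0\propto h$ and $f_1(\mu^\pm)\neq0$.
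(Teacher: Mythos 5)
Your proposal follows essentially the same route as the paper: perturb inside $\mathfrak{X}_2$ keeping the four lines invariant, compute the linearized Lyapunov quantities at $\alpha=0$, find $k=4$ after a linear change of parameters, and apply Theorem~\ref{GGT} with $\ell=1$ at a simple zero $\mu^\pm$ of $f_0$ where $f_1\neq0$ (the paper uses only the four parameters $a_1xy,a_2y^2,b_1x^2,b_2xy$, but this is immaterial). The only small inaccuracy is that $f_0$ is not proportional to $h$ alone but equals $\frac{1}{113400}\mu(\mu-2)(\mu+2)h(\mu)$, with the extra factor $\mu(\mu-2)(\mu+2)$ also dividing $f_1$ — which is precisely why the statement singles out the roots of $h$; your verification plan (simple zero of $f_0$, $f_1(\mu^\pm)\neq0$) is nonetheless exactly what the paper does.
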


\begin{proof} Let $Y=(R,S)$ be the perturbation of $X$ given by,
	\[\begin{array}{l}
		\displaystyle R(x,y)=P(x,y)+(4x^2-1)(-\alpha y+a_1xy+a_2y^2), \vspace{0.2cm} \\ \displaystyle S(x,y)=Q(x,y)+(4y^2-1)(\alpha x+b_1x^2+b_2xy),
	\end{array}\]
with $\alpha,$ $a_1$, $a_2$, $b_1$, $b_2\in\mathbb{R}$. When $\alpha=0,$ by computing the linear terms of the first six Lyapunov quantities in relation to $\Lambda=\{a_1,a_2,b_1,b_2\}$, we observe that the rank of the jacobian matrix in relation to $\Lambda$ is four (i.e. under the notation of Theorem~\ref{GGT}, $k=4$). Due to size, we will only show the first two Lyapunov quantities:
	\[\begin{array}{l}
		\displaystyle L_1=-\frac{4}{3}a_1+\frac{2}{3} \mu b_2+O_2(a_1,a_2,b_1,b_2), \vspace{0.2cm} \\
		\displaystyle L_2=\left(\frac{2}{9} \mu ^2+\frac{88}{15}\right)a_1+\frac{32}{15} \mu a_2-\frac{64}{15}b_1-\left(\frac{7}{9} \mu ^3+\frac{4}{15} \mu \right)b_2+O_2(a_1,a_2,b_1,b_2).
	\end{array}\] 
Taking the first three Lyapunov quantities in relation to $\Lambda$ we have that the rank of the jacobian matrix is three. Hence, there is a change in the variables $(a_1,a_2,b_1,b_2)\mapsto(\lambda_1,\lambda_2,\lambda_3,\lambda_4)=\lambda$, with $\lambda_4=b_2,$ that for shortness we do not give explicitly, after which $L_i=\lambda_i+O_2(\lambda),$ $i=1,2,3,$ and
\[		\displaystyle L_4=\left(\sum_{i=1}^{3}g_{4,i}( \mu )\lambda_i\right)+f_0( \mu )\lambda_4+O_2(\lambda), \quad
		\displaystyle L_5=\left(\sum_{i=1}^{3}g_{5,i}( \mu )\lambda_i\right)+f_1( \mu )\lambda_4+O_2(\lambda),\] 
where,
	\[\begin{array}{l}
		\displaystyle f_0( \mu )=\frac{1}{113400} \mu ( \mu -2)( \mu +2)h( \mu ), \vspace{0.2cm}\\ \displaystyle f_1( \mu )=-\frac{1}{623700} \mu ( \mu -2)( \mu +2)(5775 \mu ^6+468020 \mu ^4+852000 \mu ^2+1150336),
	\end{array}\]
where $h$ is given in the statement. We observe that $h$ has exactly two real roots, $\mu=\mu^\pm\approx\pm0.5912.$  Since each $\mu^\pm$ is a simple zero of $f_0$ and $f_1(\mu^\pm)\neq0$,  it follows from Theorem~\ref{GGT} with $\ell=1$ that by taking $ \mu =\mu^\pm,$  we can bifurcate at least five limit cycles from the origin. \end{proof}

\begin{remark}
	We remark that $D_7$ is not the only family from which we can bifurcate five limit cycles. Families $D_1$, $D_5$ and $D_6$ also achieved this number. Under the notations of Theorem~\ref{GGT}, families $D_7$ and $D_1$ have $k=4$ and $\ell=1$, while families $D_5$ and $D_6$ achieved five limit cycles with  $k=3$ and $\ell=2$.
\end{remark}

\section{Simultaneous bifurcation of limit cycles from two centers}\label{Sec6}

In this section we will take advantage of a symmetry of the family of vector fields $X=(P,Q)$ with
\[
P=(4x^2-1)(a_{00}+a_{20}x^2+a_{11}xy+ a_{02}y^2), \quad Q(x,y)=(4y^2-1)(b_{00}+b_{20}x^2+b_{11}xy+ b_{02}y^2).
\]
Notice that it is invariant by the change of variables and time $(x,y,t)\mapsto(-x,-y,-t).$  Hence if some behaviour appears in the half plane $\mathcal{H}^+=\{x>0\}$ it also appears in $\mathcal{H}^-=\{x<0\},$ but with reversed stabilities. In particular, if a critical point $p\in\mathcal{H}^+$ is surrounded by $N$ limit cycles in this region the vector field has at least $2N$ limit cycles, because there are $N$ symmetric  limit cycles in~$\mathcal{H}^-.$

\begin{proposition}\label{P9}
	Let $X=(P,Q)$ be the family of vector fields
\begin{equation*}%\label{35}
	\begin{array}{l}
		\displaystyle P(x,y)=(4x^2-1)(xy+\lambda y^2), \vspace{0.2cm} \\
		\displaystyle Q(x,y)=(4y^2-1)(1-16x^2-4\alpha xy+\mu y^2),
	\end{array}
\end{equation*}	
with $a,\alpha,\mu\in\mathbb{R}.$ Then:
\begin{itemize}
\item[(a)]  When $\alpha=\lambda=0,$ then, for all values of $\mu,$ $X$ has two reversible centers at the points $p^{\pm}=(\pm 1/4,0).$ 

\item[(b)] When $\alpha=0$ and  $\mu=-{3}/{8}$ there is a perturbation inside this family (and so in $\mathfrak{X}_2$) bifurcating two limit cycles from both $p^\pm,$ simultaneously.

\item[(c)] When $\alpha=\mu=0$ there is a perturbation inside this family and keeping $\mu=0$ (and so in $\mathfrak{X}_2^0$) bifurcating one limit cycle from both $p^\pm,$ simultaneously.
\end{itemize}	
\end{proposition}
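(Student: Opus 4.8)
The plan is to prove each item by the same mechanism used elsewhere in the paper, exploiting the $(x,y,t)\mapsto(-x,-y,-t)$ symmetry together with the reversibility criterion and Theorem~\ref{GGT}.

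\textbf{Item (a).} First I would note that when $\alpha=\lambda=0$ the field becomes $P(x,y)=(4x^2-1)xy$, $Q(x,y)=(4y^2-1)(1-16x^2+\mu y^2)$. This is reversible with respect to $y=0$: indeed $P(x,-y)=-P(x,y)$ and $Q(x,-y)=Q(x,y)$, so statement~$(b)$ of Section~\ref{Sec3.3} applies. The singularities off the invariant lines are found by setting $xy=0$ and $1-16x^2+\mu y^2=0$ with $4x^2-1\neq0$, $4y^2-1\neq0$; the branch $y=0$ gives $x=\pm 1/4$, i.e. $p^\pm=(\pm 1/4,0)\in\Delta$. A direct computation of $DX(p^\pm)$ shows the linear part has zero trace and positive determinant, so $p^\pm$ are linear centers lying on the symmetry line $S_2=\{y=0\}$; by the Poincar\'e reversibility criterion they are genuine centers, for every $\mu$. (Note the two points are exchanged by the global symmetry $(x,y,t)\mapsto(-x,-y,-t)$, consistent with the fact that a reversal of time does not affect the center character.)

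\textbf{Items (b) and (c).} Here the strategy is: translate $p^+=(1/4,0)$ to the origin, bring the linear part of the center to the normal form $\dot x=-y+\cdots$, $\dot y=x+\cdots$ by a linear change of variables and a time rescaling, and then apply Theorem~\ref{GGT} to a perturbation that stays inside the stated family (so that the four invariant lines, hence $\Delta$, remain invariant; this is exactly why the $\alpha$-dependent version of Theorem~\ref{GGT} was introduced in the remark after its statement). For (b) one takes $\alpha=0$, $\mu=-3/8$ and perturbs via the free parameters $\lambda$ and $\mu$ (and the unfolding parameter $\alpha$ playing the role of the trace term $\alpha y$, $\alpha x$); I would compute the linear parts in these parameters of the first few Lyapunov quantities $L_1,L_2,\dots$ at the origin $p^+$, determine the rank $k$ and the reduced functions $f_0,f_1,\dots$ exactly as in Propositions~\ref{P7} and~\ref{P8}, and check that at $\mu=-3/8$ one has $f_0=0$, $f_0'\neq0$, $f_1\neq0$, giving $k=1$, $\ell=1$ and hence two limit cycles bifurcating from $p^+$. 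By the symmetry $(x,y,t)\mapsto(-x,-y,-t)$ the mirror perturbation produces two more limit cycles around $p^-$, and one argues that the perturbative parameters can be chosen so that both bifurcations occur simultaneously (the symmetry forces the Lyapunov data at $p^-$ to be determined by that at $p^+$, so a single choice of $\lambda,\mu,\alpha$ does the job). For (c) the same scheme is run with $\alpha=\mu=0$ held fixed throughout — so the perturbation only moves $\lambda$ and the trace parameter $\alpha$, keeping the field in $\mathfrak{X}_2^0$ — and one finds $k=1$, $\ell=0$, i.e. one limit cycle from $p^+$ and, by symmetry, one from $p^-$.

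\textbf{Main obstacle.} The conceptual part is routine; the real work is the symbolic computation of the Lyapunov quantities of the \emph{translated} system at $p^\pm$ (the translation makes the polynomials considerably messier than in Propositions~\ref{P7}--\ref{P8}), the identification of the correct change of parameters $M$ and the ranks $k$, and the verification that the relevant $f_j$ have the claimed simple zeros at $\mu=-3/8$ (item (b)) with $f_1$ nonvanishing there. A secondary subtlety is to confirm that after the linear change putting the center in normal form, the perturbation terms allowed by staying inside the displayed family still span enough directions in parameter space for Theorem~\ref{GGT} to apply with the stated $\ell$; in item (c) this is tight, since one is constrained to $\mathfrak{X}_2^0$ and only a single genuine Lyapunov parameter (plus the trace) is available, so $\ell=0$ is the best one can hope for by this method.
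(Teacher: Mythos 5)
Your proposal is correct and follows essentially the same route as the paper: reversibility with respect to $y=0$ for item (a), Theorem~\ref{GGT} applied at $p^+$ with perturbation parameter $\lambda$ and $k=1$, $\ell=1$ at the simple zero $\mu=-3/8$ of $f_0$ for item (b), a degenerate Andronov--Hopf (one-cycle) bifurcation for item (c), and the symmetry $(x,y,t)\mapsto(-x,-y,-t)$ to duplicate everything at $p^-$ simultaneously. The only content you defer is the explicit computation of $f_0(\mu)=\frac{\sqrt{6}}{9}(8\mu+3)$ and $f_1(\mu)$, which is exactly what the paper supplies.
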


\begin{proof} First of all notice that from the comments before the statement of the proposition it suffices to study the point $p^+$ and the bifurcations from it. This is so,  because all the periodic orbits $\gamma$ around it also appear around~$p^-$ as $-\gamma,$ with reversed orientation and stability.

	$(a)$ When $\alpha=\lambda=0$  then $X$ is reversible in relation to the straight line $y=0$. Since  $p^+$  lies on this line it follows that it is a center, see Section~\ref{Sec3.3} for more details.
	
	$(b)$ When $\alpha=0$ let us compute the expansions in $\lambda$ of the Lyapunov quantities for $p^+.$ We get
	\begin{equation*}%\label{32}
		L_1=f_0(\mu)\lambda+O_2(\lambda), \quad  L_2=f_1(\mu)\lambda+O_2(\lambda),
	\end{equation*}
	where 
	\[
	f_0(\mu)=\frac{\sqrt{6}}{9}(8\mu+3),\quad f_1(\mu)=-\frac{\sqrt{6}}{12960}(22528\mu^3+6464\mu^2+27024\mu+1503).\]  By taking $k=1$, $\ell=1$ and $\mu_0=-3/8$ we are under the hypotheses of Theorem~\ref{GGT} and we obtain two limit cycles around $p^+.$
	
	$(c)$ When $\mu=0$ we have a classical Andronov-Hopf bifurcation around $p^+$ and a single limit cycle appears.
	\end{proof}

\section{Proof of the main results}\label{Sec7}

We start proving Proposition~\ref{T1.5}. As we will see it will follow as a corollary of a much general result. We recall some definitions. As usual, we will denote  by $\operatorname{ind}_X(p)$ the index of the singularity $p$ with respect to the vector field $X$ (see \cite[Chapter $6$]{DumLliArt2006}). Recall that simple singularities correspond to the ones such that $\det DX(p)\ne0$ and for them $\operatorname{ind}_X(p)=\operatorname{sgn}(DX(p)).$

Finally, given the set of singularities of a vector field $X$ in~\cite{CGM1993} it is defined what is understood as their {\it configuration}. Without entering in full detail the configuration encodes the position of the singularities together with their indices. In particular, Berlinski\u \i's Theorem stated in the introduction  gives all the possible configurations of the singularities of quadratic vector fields with four singularities.

\begin{lemma}\label{le:det}
 Consider the planar vector fields $X=(uf,vg)$ and $Y=(f,g)$ where
 $f,g, u$ and $v$ are~$\mathcal{C}^1$ functions from $\mathbb{R}^2$ into $\mathbb{R}.$ Let $p$ be a singularity of $Y$ (and so, of $X$). Then \begin{equation}
 	\label{eq:det}
 \det DX(p)=u(p)v(p)\det DY(p).\end{equation}
 In particular, if $u(p)v(p)\det DY(p)\ne0,$ 
 then \begin{equation}
 	\label{eq:ind}\operatorname{ind}_X (p)= \operatorname{sgn}(u(p)v(p))\operatorname{ind}_Y (p).
 \end{equation}
 \end{lemma}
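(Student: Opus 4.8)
The plan is to compute $DX(p)$ directly from the product rule and exploit that $p$ is a singularity of $Y$, i.e. $f(p)=g(p)=0$. Writing $X=(uf,vg)$, the first row of $DX$ is $\bigl(\partial_x(uf),\partial_y(uf)\bigr)=\bigl(u_xf+uf_x,\;u_yf+uf_y\bigr)$, and similarly the second row is $\bigl(v_xg+vg_x,\;v_yg+vg_y\bigr)$. Evaluating at $p$ and using $f(p)=g(p)=0$ kills the terms $u_xf$, $u_yf$, $v_xg$, $v_yg$, so that
\[
DX(p)=\begin{pmatrix} u(p)f_x(p) & u(p)f_y(p)\\ v(p)g_x(p) & v(p)g_y(p)\end{pmatrix}
= \begin{pmatrix} u(p) & 0\\ 0 & v(p)\end{pmatrix}\begin{pmatrix} f_x(p) & f_y(p)\\ g_x(p) & g_y(p)\end{pmatrix}
= \operatorname{diag}(u(p),v(p))\,DY(p).
\]
Taking determinants and using multiplicativity gives $\det DX(p)=u(p)v(p)\det DY(p)$, which is exactly~\eqref{eq:det}.

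For the second assertion, I would recall (as stated just before the lemma) that when $\det DX(p)\neq0$ the singularity is simple and $\operatorname{ind}_X(p)=\operatorname{sgn}(\det DX(p))$; likewise for $Y$, since $u(p)v(p)\det DY(p)\neq0$ forces $\det DY(p)\neq0$. Then~\eqref{eq:ind} follows immediately by applying $\operatorname{sgn}$ to~\eqref{eq:det} and using that $\operatorname{sgn}$ is multiplicative: $\operatorname{sgn}(\det DX(p))=\operatorname{sgn}(u(p)v(p))\operatorname{sgn}(\det DY(p))$.

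There is essentially no obstacle here: the whole content is the cancellation of the terms involving derivatives of $u$ and $v$ against the factors $f(p)=g(p)=0$, which is why the hypothesis that $p$ is a singularity of $Y$ (and not merely of $X$) is used. The only point requiring a word of care is that one needs $p$ to be a zero of both $f$ and $g$ — not just of $uf$ and $vg$ — so that the off-diagonal-in-the-product-rule terms vanish; this is guaranteed because $p$ is assumed to be a singularity of $Y=(f,g)$. I would also remark in passing that the factorization $DX(p)=\operatorname{diag}(u(p),v(p))\,DY(p)$ records slightly more than~\eqref{eq:det}: it shows the two Jacobians are related by left multiplication by a diagonal matrix, which will be convenient when applying the lemma to the family~\eqref{7} with $u=4x^2-1$, $v=4y^2-1$ (both negative on $\Delta$), since then $\operatorname{sgn}(u(p)v(p))=+1$ and the index is preserved.
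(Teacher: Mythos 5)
Your proof is correct and follows essentially the same route as the paper: use the product rule together with $f(p)=g(p)=0$ to kill the terms involving derivatives of $u$ and $v$, then factor $u(p)$ and $v(p)$ out of the rows and take determinants. If anything, your explicit factorization $DX(p)=\operatorname{diag}(u(p),v(p))\,DY(p)$ is slightly cleaner than the paper's display, which writes the scalar $u(p)v(p)$ in front of $DY(p)$ (true only at the level of determinants, not as a matrix identity).
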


\begin{proof}  Since $f(p)=g(p)=0,$
the Jacobian matrix of $X$ at $p$ is given by,
\begin{equation}\label{39}
	DX(p)=\left(\begin{array}{cc} 
		\displaystyle u(p)\frac{\partial f}{\partial x}(p) & \displaystyle  u(p)\frac{\partial f}{\partial y}(p) \vspace{0.2cm} \\
		\displaystyle  v(p)\frac{\partial g}{\partial x}(p) & \displaystyle  v(p)\frac{\partial g}{\partial y}(p) 
	\end{array}\right)=u(p)v(p)\left(\begin{array}{cc} 
	\displaystyle \frac{\partial f}{\partial x}(p) & \displaystyle  \frac{\partial f}{\partial y}(p) \vspace{0.2cm} \\
	\displaystyle  \frac{\partial g}{\partial x}(p) & \displaystyle  \frac{\partial g}{\partial y}(p) 
	\end{array}\right).
\end{equation}
Therefore, equality~\eqref{eq:det} follows.\end{proof} 

Next result is a straightforward consequence of equality~\eqref{eq:ind}.

\begin{proposition}\label{pr:gen}  Let $Y=(f,g)$ be a~$\mathcal{C}^1$ planar vector field having a given configuration of singularities and with all its singularities simple. If  $u,v:\mathbb{R}^2\to\mathbb{R}$ are functions of class~$\mathcal{C}^1$ and  all the singularities $p$ of $Y$ are in the set\, $\mathfrak{D}:=\{q\in\mathbb{R}^2\,:\, u(q)v(q)>0 \}$ then the configuration of singularities of $X=(uf,vg)$ contained in~$\mathfrak{D}$ coincides with the configuration of the singularities of $Y.$ 
\end{proposition}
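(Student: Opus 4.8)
The plan is to deduce Proposition~\ref{pr:gen} almost immediately from Lemma~\ref{le:det} together with the equality~\eqref{eq:ind}, the only delicate point being that the notion of \emph{configuration} records not just the indices of the singularities but also their relative position in the plane (in the sense of~\cite{CGM1993}). Since $X=(uf,vg)$ and $Y=(f,g)$ have \emph{exactly the same} set of zeros --- because $u$ and $v$ do not vanish on the common zero set of $f$ and $g$, as all such zeros lie in $\mathfrak{D}=\{uv>0\}$ --- the positional part of the configuration is automatically identical. So the whole content of the statement reduces to checking that the index of each singularity is preserved.

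First I would fix a singularity $p$ of $Y$; by hypothesis $p\in\mathfrak{D}$, so $u(p)v(p)>0$, and moreover $\det DY(p)\neq0$ since $p$ is simple. Hence $u(p)v(p)\det DY(p)\neq0$, which is exactly the hypothesis needed to invoke the second part of Lemma~\ref{le:det}. From~\eqref{eq:ind} we get $\operatorname{ind}_X(p)=\operatorname{sgn}(u(p)v(p))\,\operatorname{ind}_Y(p)=\operatorname{ind}_Y(p)$, because $\operatorname{sgn}(u(p)v(p))=+1$. In particular $\det DX(p)=u(p)v(p)\det DY(p)\neq0$ as well, so $p$ is also a simple singularity of $X$, and $\operatorname{ind}_X(p)=\operatorname{sgn}(\det DX(p))=\operatorname{sgn}(\det DY(p))=\operatorname{ind}_Y(p)$.

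Then I would observe that, conversely, any singularity of $X$ lying in $\mathfrak{D}$ is a zero of $uf$ and $vg$ with $u,v$ nonzero there, hence a zero of $f$ and $g$, i.e. a singularity of $Y$; so within $\mathfrak{D}$ the two vector fields have the same singularities, each simple, each with the same index. Collecting these facts: the singularities of $X$ in $\mathfrak{D}$ are in the same positions as those of $Y$, are all simple, and carry the same indices. By the definition of configuration this says precisely that the configuration of the singularities of $X$ contained in $\mathfrak{D}$ coincides with the configuration of the singularities of $Y$, which is the claim.

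The only real obstacle is bookkeeping about the word ``configuration'': one must make sure that the comparison in Proposition~\ref{pr:gen} is between the \emph{full} configuration of $Y$ and the configuration of $X$ \emph{restricted to} $\mathfrak{D}$ (the field $X$ may well have extra singularities outside $\mathfrak{D}$ --- on the curves $u=0$ or $v=0$ --- which are irrelevant), and that ``simple'' is preserved so that the index is still given by the sign of the Jacobian determinant. Once this is set up correctly, the proof is a one-line application of~\eqref{eq:ind}, so I would keep the write-up short.
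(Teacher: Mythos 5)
Your argument is correct and is exactly the route the paper takes: it declares Proposition~\ref{pr:gen} a ``straightforward consequence'' of equality~\eqref{eq:ind} in Lemma~\ref{le:det}, and your write-up simply makes explicit the two bookkeeping points (that the singular sets of $X$ and $Y$ coincide inside $\mathfrak{D}$, and that $\operatorname{sgn}(u(p)v(p))=+1$ there so indices and simplicity are preserved). No gaps.
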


Notice that in the above result the saddle points of $Y$ keep as saddle points for $X$ and the same happens with the antisaddles, but center points can be converted into nodes or foci, and reciprocally.  Moreover  nodes and foci can change their stabilities because the signs of the traces of the two matrices $DX(p)$ and $DY(q)$ substantially change. Moreover, obviously, when the singularities are in $\{q\in\mathbb{R}^2\,:\, u(q)v(q)<0 \}$ saddles for $X$ are converted into antisaddles for $Y$ and vice versa.

\begin{proof}[Proof of Proposition~\ref{T1.5}]  
	We will apply Proposition~\ref{pr:gen} with $u(x,y)=4x^2-1$ and $v(x,y)=4y^2-1$ and $Y=(f,g),$ being $f$ and $g$  quadratic polynomials. Notice that if $Y$ has four singularities all of them must be simple and by hypotheses they are in $\Delta.$ Moreover $\Delta\subset \mathfrak{D}.$  Therefore the proposition follows by Proposition~\ref{pr:gen} and by using that  Berlinski\u \i's Theorem holds for the quadratic vector field $Y$.
\end{proof}

Although it is not the aim of this work we give a similar result to Proposition~\ref{T1.5} but for cubic Kolmogorov systems. Its proof is identical to the above one.

\begin{proposition}Assume that $X=(x f(x,y), y g(x,y)),$ where $f$ and $g$ are quadratic polynomials and it has four singularities in $\Delta'=\{(x,y)\in\mathbb{R}^2\,:\, x>0,\, y>0\}.$ Then the conclusions of Berlinski\u \i's Theorem 
	also hold for these four singularities.
	\end{proposition}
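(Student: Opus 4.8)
The plan is to mirror the proof of Proposition~\ref{T1.5} exactly, substituting the role played there by the pair $(u,v)=(4x^2-1,\,4y^2-1)$ with the pair $(u,v)=(x,\,y)$, and the invariant square $\Delta$ with the positive quadrant $\Delta'$. Concretely, I would write $X=(uf,vg)$ with $u(x,y)=x$ and $v(x,y)=y$, so that $X$ is a cubic Kolmogorov system once $f$ and $g$ are quadratic, and note that the four singularities of $X$ lying in $\Delta'$ are precisely the singularities of the quadratic vector field $Y=(f,g)$ that fall in $\Delta'$ (the coordinate axes are themselves invariant, so singularities on them are excluded from $\Delta'$). Since on $\Delta'$ we have $x>0$ and $y>0$, clearly $uv>0$ there, i.e.\ $\Delta'\subset\mathfrak{D}=\{q:u(q)v(q)>0\}$.

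The key steps, in order, are: first, observe that if $Y$ has exactly four singularities in $\Delta'$ then all of them must be simple — this is the same elementary remark used in the proof of Proposition~\ref{T1.5}, since a quadratic vector field has at most four singularities counted with multiplicity, so four distinct ones forces each to be nondegenerate, $\det DY(p)\neq0$. Second, apply Lemma~\ref{le:det}, or rather its packaged form Proposition~\ref{pr:gen}, with this $u$, $v$ and $Y$: because all singularities of $Y$ in $\Delta'$ lie in $\mathfrak{D}$ and are simple, the configuration of the singularities of $X$ contained in $\Delta'$ coincides with the configuration of the singularities of $Y$ (indices are preserved since $\operatorname{sgn}(uv)=+1$ on $\Delta'$). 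Third, invoke Berlinski\u\i's Theorem for the quadratic vector field $Y$ restricted to these four singularities: whichever of the two alternatives (convex quadrilateral with two opposite saddles and two antisaddles, or triangle with three exterior and one interior singularity of opposite index types) holds for $Y$, it transfers verbatim to $X$ on $\Delta'$. This chain of implications is exactly the one carried out in the proof of Proposition~\ref{T1.5}, so the argument is essentially a one-line invocation.

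I do not expect a genuine obstacle here — the statement is explicitly flagged in the text as having a proof ``identical to the above one.'' The only point meriting a sentence of care is the verification that the four singularities under consideration are truly simple and truly interior to $\mathfrak{D}$: one must rule out a singularity of $Y$ sitting on one of the coordinate axes (where $u$ or $v$ vanishes and the index comparison in~\eqref{eq:ind} breaks down), but this is automatic because such a point would not lie in the open quadrant $\Delta'$, and it is part of the hypothesis that the four singularities are in $\Delta'$. Beyond that, there is nothing to grind: the substance is entirely contained in Lemma~\ref{le:det}, Proposition~\ref{pr:gen}, and the classical Berlinski\u\i\ Theorem, all of which are available. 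Hence the proof reduces to: state $u=x$, $v=y$, $Y=(f,g)$; note $\Delta'\subset\mathfrak{D}$ and the four singularities are simple and in $\Delta'$; apply Proposition~\ref{pr:gen} and Berlinski\u\i's Theorem.
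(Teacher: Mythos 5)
Your proposal is correct and follows exactly the paper's argument: the paper proves this proposition by the same one-line invocation of Proposition~\ref{pr:gen} with $u(x,y)=x$, $v(x,y)=y$ and $Y=(f,g)$, noting that $\Delta'\subset\mathfrak{D}$ and that four singularities of a quadratic field are necessarily simple, then applying Berlinski\u\i's Theorem to $Y$. Your extra remark about singularities on the coordinate axes being automatically excluded is a reasonable point of care but not a divergence from the paper's route.
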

	
We can also apply Proposition~\ref{pr:gen} to the vector fields of degree five $X\in\mathfrak{X}_3$ having nine singularities in $\Delta$ or to $Y$  being a quartic Kolmogorov system having nine singularities en $\Delta'.$  Then from the Benlinskii's type results for cubic systems proved in~\cite{CGM1993,LV} we have the corresponding results for~$X$ and~$Y.$

Before we continue to work on the proof of Theorems~\ref{T1} and~\ref{T2}, we recall some more definitions. Let $X$ be a planar polynomial vector field of degree $n$ and $\ell\subset\mathbb{R}^2$ an algebraic curve of degree $m$, i.e. $\ell=F^{-1}(0)$, for some polynomial $F\colon\mathbb{R}^2\to\mathbb{R}$ of degree $m$. We say that $p\in\ell$ is a \emph{contact point} between $X$ and $\ell$ if $X(p)$ is tangent to $\ell$. Let
\[H(x,y)=\left<X(x,y),\nabla F(x,y)\right>,\]
where $\nabla F$ is the gradient of $F$ and $\left<\cdot,\cdot\right>$ is the standard inner product of $\mathbb{R}^2$. Observe that $H$ is a polynomial of degree $n+m-1$ and that $(x,y)\in\mathbb{R}^2$ is a contact point between $X$ and $\ell$ if, and only if, it is a solution of
\[\left\{\begin{array}{l}
	F(x,y)=0, \vspace{0.2cm} \\
	H(x,y)=0.
\end{array}\right.\]
If $\ell$ is not invariant by the flow of $X$, then it follows from Bezout's Theorem that there are at most $m(n+m-1)$ contact points between $X$ and $\ell$. Moreover, observe that if $p\in\ell$ is a singularity of $X$, then it is also a contact point.

\begin{proof}[Proof of Theorem~\ref{T1}] Statements $(b)$ and $(c)$ follows directly from Propositions~\ref{P8} and \ref{P9}\,$(b)$, respectively. Hence, we need only to prove that if $X\in\mathfrak{X}_2$, then $X$ can have at most two centers in $\Delta$. 
	
Firstly we will prove that if $X$ had three centers in $\Delta$ then they would be non-degenerated centers. This follows because centers have index $+1$  and it is known that the index of a singularity is congruent, modulus 2, with the multiplicity of this singularity \cite[Theorems $1.1$\&$2.1$]{EL} (for more details, we also refer to \cite[Section $6.8$]{DumLliArt2006}). In particular, non-degenerated centers must have multiplicity at least 3, and as a consequence if one of such centers exists $X$ would have at most two singularities in $\Delta.$ Hence we can assume that the three centers would be non-degenerated.
	
As we have already explained in the introduction, in our proof we will follow the same scheme of the proof of the same result for quadratic systems, see~\cite{Cop1966}.
	
Let $X=(P,Q)\in\mathfrak{X}_2$ be given by
\begin{equation}\label{37}
	P(x,y)=(4x^2-1)f(x,y), \quad Q(x,y)=(4y^2-1)g(x,y),
\end{equation}
where $f$ and $g$ are polynomials of degree $2$. Suppose that $X$ has two centers $p$, $q\in\Delta$.
 We claim that $p$ and $q$ have opposite orientations. Indeed, let $\ell\subset\mathbb{R}^2$ be the unique straight line such that $p$, $q\in\ell$. Let also $a$, $b$, $c\in\mathbb{R}$ be such that $\ell$ is given by $\ell(x,y)=ax+by+c=0$. Let
\begin{equation}\label{36}
	H(x,y)=\left<X(x,y),(a,b)\right>=a(4x^2-1)f(x,y)+b(4y^2-1)g(x,y).
\end{equation}
Since $p$, $q\in\ell$ are singularities of $X$, it follows that they are also contact points between $X$ and $\ell$. Moreover, since $\deg\ell=1$ and $\deg X=4$, it follows that $X$ can have at most two other contact points. We claim that if such contact point exist, then it is not between $p$ and $q$. Indeed, suppose first that $a=0$, i.e. that $\ell$ is a horizontal line given by $y=y_0$, for some $y_0\in(-{1}/{2},{1}/{2})$. Let $(x,y_0)\in\ell$. It follows from \eqref{36} that $H(x,y_0)=0$ if, and only if,
	\[(4y_0^2-1)g(x,y_0)=0.\]
Since $-{1}/{2}<y_0<{1}/{2}$, it follows that $4y_0^2-1\neq0$ and thus $H(x,y_0)=0$ if, and only if, $g(x,y_0)=0$. Hence, $X$ has at most two contact points with $\ell$. But since $p$, $q\in\ell$ are already contact points of $X$, it follows that $X$ has no other contact points with $\ell$. The case $b=0$ follows similarly. Hence, for now on, we can assume that $\ell$ is neither horizontal, nor vertical. In particular, it follows that $\ell$ must intersect with the straight lines
	\[2x-1=0, \quad 2x+1=0, \quad 2y-1=0, \quad 2y+1=0.\]
Suppose first that this intersection occurs in four different points $q_1,\dots,q_4\in\mathbb{R}^2$. See Figure~\ref{Fig1}.
\begin{figure}[h]
	\begin{center}
		\begin{overpic}[height=8cm]{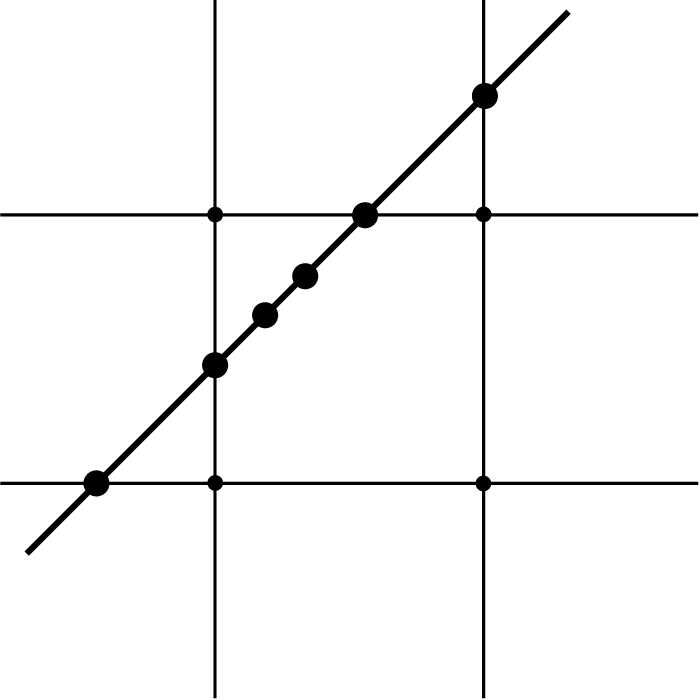} %Esse é para versão final
		%\begin{overpic}[height=8cm,grid,tics=5]{Fig1.eps} %Esse é só para colocar os \put
			\put(40,52){$p$}
			\put(46,58){$q$}
			\put(102,29){$y=-\frac{1}{2}$}
			\put(102,68){$y=\frac{1}{2}$}
			\put(13,95){$x=-\frac{1}{2}$}
			\put(56,95){$x=\frac{1}{2}$}
			\put(82.5,97){$\ell$}
			\put(72,85){$q_1$}
			\put(33,45){$q_2$}
			\put(53,65){$q_3$}
			\put(13,26){$q_4$}
			\put(32,27){$p_1$}
			\put(70,27){$p_2$}
			\put(71,71){$p_3$}
			\put(32,71){$p_4$}
		\end{overpic}
	\end{center}
	\caption{An illustration of $\ell$.}\label{Fig1}
\end{figure}
Let $\ell=\infty p\cup pq \cup q\infty$, where $pq$ denotes the segment between $p$ and $q$, $\infty p$ denotes the semi-straight-line containing $p$ and not $q$, and $q\infty$ be the semi-straight-line containing $q$ and not $p$. We claim that $X$ has a contact point in $\infty p$ and in $q\infty$. Indeed, since $p$, $q\in\Delta$ are singularities of $X$, with $X$ given by \eqref{37}, it follows that $f(p)=g(p)=0$. Since $f$ and $g$ have degree two, it follows that $Y=(f,g)$ cannot have other contact points with $\ell$. Therefore, $f$ and $g$ are nonzero and have constant sign on $\infty q\backslash\{q\}$ and on $p\infty\backslash\{p\}$. We will now focus our attention on $q\infty$. The proof for $\infty p$ is similar. Let $w\in q\infty\backslash\{q\}$ be sufficiently close to $q$. Since $q$ is a non-degenerate center of $X$, i.e. $q$ is a center such that $\det DX(q)>0$, we claim that the module of the angle between $X(w)$ and $\ell$ converges to $\frac{1}{2}{\pi}$ as $w\to q$. Indeed, after a translation, linear change of variables and a rescaling of time, we can suppose in this new set of variables $(x,y)\mapsto(u,v)$, that $q$ is the origin, $\ell$ is given either by $u=0$ or $v=\alpha u$, for some $\alpha\in\mathbb{R}$, and that $X=(\mathcal{P},\mathcal{Q})$ is given by,
	\[\mathcal{P}(u,v)=-v+F(u,v), \quad \mathcal{Q}(u,v)=u+G(u,v),\]
where $F$ and $G$ have neither linear nor constant terms (recall \eqref{21}). If $\ell$ is given by $u=0$, then
	\[\lim\limits_{v\to0}\frac{\mathcal{Q}(0,v)}{\mathcal{P}(0,v)}=\lim\limits_{u\to0}\frac{G(0,v)}{-v+F(0,\alpha v)}=0,\]
with the last equality following from the fact that $F$ and $G$ have neither linear nor constant terms. Similarly, if $\ell$ is given by $v=\alpha u$ for some $\alpha\in\mathbb{R}$, then
	\[\lim\limits_{u\to0}\frac{\mathcal{P}(u,\alpha u)}{\mathcal{Q}(u,\alpha u)}=\lim\limits_{u\to0}\frac{-\alpha u+F(u,\alpha u)}{u+G(u,\alpha u)}=-\alpha.\]
In either case, it follows that $X(w)$ is becomes orthogonal to $\ell$ as $w\in q\infty\backslash\{q\}$ approaches $q$. This proves the claim.

Therefore, back to our original coordinates, if $(a,b)$ is a normal vector of $\ell$, it follows that if $w\in q\infty\backslash\{q\}$ is sufficiently close to $q$, then exactly one of the following two statements hold.
\begin{enumerate}[label=(\alph*)]
	\item $\operatorname{sgn}(P(w))=\operatorname{sgn}(a)$ and $\operatorname{sgn}(Q(w))=\operatorname{sgn}(b)$;
	\item $\operatorname{sgn}(P(w))=-\operatorname{sgn}(a)$ and $\operatorname{sgn}(Q(w))=-\operatorname{sgn}(b)$.
\end{enumerate}
See Figure~\ref{Fig2}.
\begin{figure}[h]
	\begin{center}
		\begin{overpic}[height=5cm]{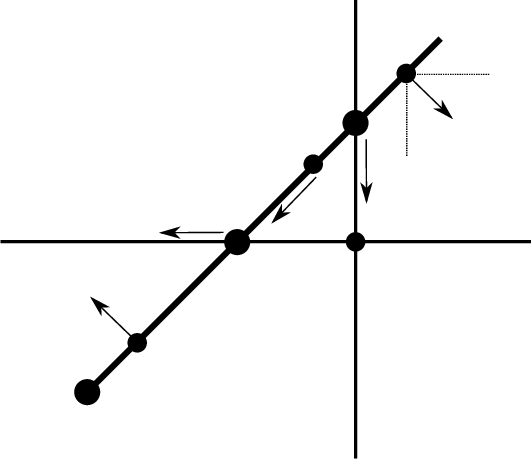} %Esse é para versão final
		%\begin{overpic}[height=8cm,grid,tics=5]{Fig2.eps} %Esse é só para colocar os \put
			\put(18,8){$q$}
			\put(28,19){$w$}
			\put(47,36){$q_3$}
			\put(60,67){$q_1$}
			\put(56,58){$r$}
			\put(74,75){$s$}
			\put(85,35){${\scriptstyle y={1}/{2}}$}
			\put(51,83){${\scriptstyle x={1}/{2}}$}
			\put(84,79){$\ell$}
			\put(1,26){$X(w)$}
			\put(86,62){$X(s)$}
			\put(69,37){$p_3$}
		\end{overpic}
	\end{center}
	\caption{An illustration of the rotation of $X$ along $\infty q$.}\label{Fig2}
\end{figure}
Since $Y=(f,g)$ cannot have other contact points with $\ell$, we recall that $f$ and $g$ have nonzero constant sign on $q\infty\backslash\{q\}$. Therefore, it follows from \eqref{37} that $P$ and $Q$ change signs if, and only if, $\ell$ intersects the straight lines $x={1}/{2}$ or $y={1}/{2}$, respectively. Since both intersections do exists (given by $q_1$ and $q_3$, respectively), it follows that as we move along $q\infty$, $Q$ changes sign at $q_3$ and then $P$ changes signs at $q_1$. Hence, $X$ has a contact point $r\in q_3q_1\subset q\infty$. See Figure~\ref{Fig2}. Similarly, $X$ has a another contact point $\overline{r}\in q_4q_2\subset\infty p$. These points, together with $p$ and $q$ provides the maximum number of contact points that $X$ can have with $\ell$. Therefore, we conclude that there is no contact point between $p$ and $q$ and thus $p$ and $q$ are centers of opposite orientations. 

If the points $q_1,\dots,q_4$ are not distinct, then $\ell$ intersects at least one of the singularities 
	\[p_1=\left(- {1}/{2},- {1}/{2}\right), \quad p_2=\left( {1}/{2},- {1}/{2}\right), \quad p_3=\left( {1}/{2}, {1}/{2}\right), \quad p_4=\left(- {1}/{2}, {1}/{2}\right).\]
For example $q_1=q_3$ if, and only if, $p_3\in\ell$ (i.e. if and only if $q_1=q_3=p_3$). See Figure~\ref{Fig1}. Since $p_3$ is a singularity, it follows that we again have a contact point $r=p_3\in q\infty$. See Figure~\ref{Fig2}.

Hence, in any case it follows that any two centers of $X$ in $\Delta$ must have opposite orientations and thus there is at most two of them. This finishes the proof. \end{proof}

\begin{proof}[Proof of Theorem~\ref{T2}] 
	Statement~$(a)$ is a straightforward consequence of Proposition~\ref{pr:sep} because  
		if $X=(P,Q)\in\mathcal{X}{^0_2}$ it is given by,
		\begin{equation*}%\label{38}
			P(x,y)=(4x^2-1)(a_{00}+a_{01}y), \quad Q(x,y)=(4y^2-1)(b_{00}+b_{10}x).
		\end{equation*}	
		Statements $(b)$ and $(c)$ follow directly from Propositions~\ref{P7} and \ref{P9}\,$(c)$, respectively.
 \end{proof}

\section*{Acknowledgments}

The first author is supported by  Ministry of Science and Innovation--State Research Agency of the Spanish Government through grants PID2022-136613NB-I00  and it is also supported by the grant 2021-SGR-00113 from AGAUR of Generalitat de Catalunya. The second author is supported by São Paulo Research Foundation (FAPESP) grants number 2022/03801-3, 2020/04717-0. The third author is supported by São Paulo Research Foundation (FAPESP) grants number 2019/10269-3, 2021/01799-9 and 2022/14353-1.


\begin{thebibliography}{99}
	
\bibitem{AccMarOvi}
{\sc E. Accinelli et al}, 
{\it Who controls the controller? A dynamical model of corruption},
J. Math. Sociol. 41, No. 4, 220--247 (2017).

\bibitem{BBS1} 
{\sc J. Bastos, C. Buzzi and P.  Santana},
{\it On structural stability of evolutionary stable strategies},
J. Differ. Equations 389, 190--227 (2024).

\bibitem{BBS2} 
{\sc J. Bastos, C. Buzzi and P.  Santana},
{\it Evolutionary stable strategies and cubic vector fields},
NoDEA, Nonlinear Differ. Equ. Appl. 31, No. 1, Paper No. 13, 54 p. (2024). 

\bibitem{BBT} 
{\sc J. Bastos, C. Buzzi and J. Torregrosa},
{\it Orbitally symmetric systems with applications to planar centers},
Commun. Pure Appl. Anal. 20, No. 10, 3319--3347 (2021).

\bibitem{B} 
{\sc A. N.  Berlinski\u \i},
{\it On the behavior of the integral curves of a differential equation} (Russian), 
Izv. Vyssh. Uchebn. Zaved. 2, 3--18 (1960). 

\bibitem{Bomze}
{\sc I. Bomze}, 
{\it Lotka-Volterra equation and replicator dynamics: a two-dimensional classification},
Biol. Cybern. 48, 201--211 (1983).

\bibitem{CGM1993}
{\sc A. Cima, A. Gasull, F. Mañosas},
{\it Some applications of the Euler-Jacobi formula to differential equations},
Proc. Am. Math. Soc. 118, No. 1, 151--163 (1993).

\bibitem{Cop1966}
{\sc W. A. Coppel},
{\it A survey of quadratic systems}, 
J. Differ. Equations 2, 293--304 (1966).

\bibitem{CoxLitShe2015}
{\sc D. Cox, J. Little and  D. O’Shea}, 
{\it Ideals, Varieties, and Algorithms, An Introduction to Computational Algebraic Geometry and Commutative Algebra},
Undergraduate Texts Math. (2015).

\bibitem{CruRomTor2020}
{\sc L. Cruz, V. Romanovski and J. Torregrosa}, 
{\it The center and cyclicity problems for quartic linear-like reversible systems},
Nonlinear Anal., Theory Methods Appl., Ser. A, Theory Methods 190, Article ID 111593, 19 p. (2020).

\bibitem{DumLliArt2006}
{\sc F. Dumortier, J. Llibre and J. C. Art\'es}, 
{\it Qualitative theory of planar differential systems},
Universitext (2006).

\bibitem{EL} 
{\sc D. Eisenbud, H. I.  Levine}, 
{\it An algebraic formula for the degree of a $\mathcal{C}^w$ map germ},
Ann. Math. (2) 106,  No. 1, 19--44 (1977).

\bibitem{Frid} 
{\sc D. Friedman}, 
{\it Evolutionary games in economics}, 
Econometrica 59, No. 3, 637-666 (1991).

\bibitem{GelKapZel1994}
{\sc I. Gelfand, M. Kapranov and A. Zelevinsky}, 
{\it Discriminants, Resultants, and Multidimensional Determinants},
Birkhäuser Boston, MA, 1994.

\bibitem{GinGouTor2021}
{\sc J. Gine, L. Gouveia and J. Torregrosa}, 
{\it Lower bounds for the local cyclicity for families of centers},
J. Differ. Equations 275, 309--331 (2021).

\bibitem{GraSza2024} 
{\sc M. Grau and I. Szántó},
{\it A polynomial system of degree four with an invariant square containing at least five limit cycles}, 
Qual. Theory Dyn. Syst. 23, No. 5, Paper No. 247, 16 p. (2024).

\bibitem{Hines}
{\sc W. Hines}, 
{\it Evolutionary stable strategies: a review of basic theory},
Theor. Popul. Biol. 31, 195--272 (1987).

\bibitem{H} 
{\sc H. Hofbauer}, 
{\it On the occurrence of limit cycles in the Volterra--Lotka equation}, 
Nonlinear Anal., Theory Methods Appl. 5, 1003--1007 (1981).

\bibitem{HS} 
{\sc H. Hofbauer and K. Sigmund}, 
{\it The Theory of Evolution and Dynamical Systems}, 
Lond. Math. Soc. Stud. Texts (1988). 

\bibitem{HSS} 
{\sc H. Hofbauer, P. Schuster and K. Sigmund}, 
{\it A Note on Evolutionary Stable Strategies and Game Dynamics}, 
J. Theor. Biol. 81, 609-612 (1979).

\bibitem{IlyYak2008}
{\sc Y. Ilyashenko and S. Yakovenko}, 
{\it Lectures on Analytic Differential Equations},
Grad. Stud. Math. (2008).

\bibitem{Kooij}
{\sc R. Kooij}, 
{\it Cubic systems with four real line invariants},
Math. Proc. Camb. Philos. Soc. 118, No. 1, 7--19 (1995).

\bibitem{LV} 
{\sc J. Llibre, C. Valls},
{\it On the singularities of the planar cubic polynomial differential systems and the Euler Jacobi formula},
Qual. Theory Dyn. Syst. 19, No. 3, Paper No. 96, 33 p. (2020).

\bibitem{MH} 
{\sc J. Maynard Smith and J. Hofbauer}, 
{\it The ``battle of the sexes'': A genetic model with limit cycle behavior},  
Theor. Popul. Biol. 32, 1--14 (1987).

\bibitem{MedTei1}
{\sc J. Medrado and M. Teixeira}, 
{\it Symmetric singularities of reversible vector fields in dimension three},
Physica D 112, No. 1-2, 122--131 (1998).

\bibitem{MedTei2}
{\sc J. Medrado and M. Teixeira}, 
{\it Codimension-two singularities of reversible vector fields in 3D},
Qual. Theory Dyn. Syst. 2, No. 2, 399--428 (2002).

\bibitem{MA} 
{\sc H.\,N. Moreira and R.\,A. Araujo}, 
{\it On the existence and the number of limit cycles in evolutionary games}, 
preprint 2011. Available at \url{https://mpra.ub.uni-muenchen.de/33895/}

\bibitem{CanBer}
{\sc E. Ozkan-Canbolat, A. Beraha}, 
{\it Evolutionary stable strategies for business innovation and	knowledge transfer},
Int. J. Sci. Study 3, 55--70 (2019).

\bibitem{Palm} 
{\sc G. Palm}, 
{\it A Note on Evolutionary Stable Strategies and Game Dynamics for $n$-person games}, 
J. Math. Biol. 19, 329-334 (1984).

\bibitem{RomSha2009}
{\sc V. Romanovski and D. Shafer}, 
{\it The Center and Cyclicity Problems, A Computational Algebra Approach},
Birkhäuser Boston, MA, Berlim, 2009.

\bibitem{SchSig}
{\sc P. Schuster and K. Sigmund}, 
{\it Coyness, philandering and stable strategies},
Animal Behaviour 29, 186--192 (1981).

\bibitem{SSHW} 
{\sc P. Schuster, K. Sigmund, J.  Hofbauer and R. Wolff}, 
{\it Self-regulation of behaviour in animal societies. Part 2: Games between two populations without selfinteraction}, 
Biol. Cybern. 40, 9--15 (1981).

\bibitem{SSHGM} 
{\sc P. Schuster, K. Sigmund,  J.  Hofbauer,  R. Gottlieb, and P. Merz}, 
{\it Selfregulation of behaviour in animal societies. Part 3: Games between two populations with selfinteraction}, 
Biol. Cybern. 40, 16--25 (1981). 

\bibitem{SmiPri1973}
{\sc J. Smith and G. Price}, 
{\it The logic of animal conflict},
Nature 246, 15--18 (1973).

\bibitem{Son1984}
{\sc S. Songling}, 
{\it On the Structure of Poincaré-Lyapunov Constants for the Weak Focus of Polynomial Vector Fields},
J. Differ. Equations 52, 52--57 (1984).

\bibitem{Suba1997}
{\sc A. Suba}, 
{\it On the Lyapunov quantities of two-dimensional autonomous system of differential equations with a critical point of centre or focus type},
Bul. Științ. Univ. Baia Mare, Ser. B. 13, 153--170 (1997).

\bibitem{TayJon1978}
{\sc P. Taylor and L. Jonker}, 
{\it Evolutionary stable strategies and game dynamics},
Math. Biosci. 40, 145--156 (1978).

\bibitem{Tei1997Cod1}
{\sc M. Teixeira}, 
{\it Generic bifurcation of reversible vector fields on a 2-dimensional manifold},
Publ. Mat., Barc. 41, No. 1, 297--316 (1997).


\bibitem{XiaoYu}
{\sc T. Xiao and G. Yu}, 
{\it Supply chain disruption management and evolutionary stable strategies of retailers in the quantity-setting duopoly situation with homogeneous goods},
Eur. J. Oper. Res. 173, No. 2, 648--668 (2006).

\bibitem{Zeg2024}
{\sc A. Zegeling}, 
{\it Nests of limit cycles in quadratic systems},
Adv. Nonlinear Anal. 13, Article ID 20240012, 34 p. (2024).

\bibitem{Zha2002}
{\sc P. Zhang}, 
{\it On the distribution and number of limit cycles for quadratic systems with two foci},
Qual. Theory Dyn. Syst. 3, No. 2, 437--463 (2002).


\end{thebibliography}
\end{document}